\newtheorem{thm}{Theorem}[section]
\newtheorem{corr}[thm]{Corollary}
\newtheorem{lem}[thm]{Lemma}
\newtheorem{prop}[thm]{Proposition}
\newtheorem{rmk}{Remark}
\def\conv{\ast}
\def\R{\mathbb{R}}
\def\P{\mathbb{P}}
\def\E{\mathbb{E}}
\def\N{\mathbb{N}}
\def\diffd{\mathrm{d}}
\newcommand{\indic}[1]{\mathds{1}\raisebox{-.4ex}{$\scriptstyle\{#1\}$}}
\newcommand{\uppar}[1]{^{\scriptscriptstyle (#1)}}
\numberwithin{equation}{section}
\begin{document}

\title{Global existence for a free boundary problem \\ of  Fisher-KPP type.}

\author{Julien Berestycki\thanks{\texttt{julien.berestycki@stats.ox.ac.uk}, Department of Statistics, University of Oxford.},
\'Eric Brunet\thanks{\texttt{Eric.Brunet@lps.ens.fr}, Laboratoire de Physique Statistique, Ecole Normale Sup\'erieure, PSL Research
University; Universit\'e Paris Diderot Sorbonne Paris-Cit\'e; Sorbonne Universit\'es UPMC Univ Paris 06; CNRS.},
Sarah Penington\thanks{\texttt{penington@maths.ox.ac.uk}, Mathematical Institute, University of Oxford.}}

\date{\today}
\maketitle

\begin{abstract}

Motivated by the study of branching particle systems with selection, we establish global existence for the solution $(u,\mu)$ of the free boundary problem 
\[
\begin{cases}
\partial_t u =\partial^2_{x} u +u & \text{for $t>0$ and $x>\mu_t$,}\\
u(x,t)=1 &\text{for $t>0$ and $x \leq \mu_t$}, \\
\partial_x u(\mu_t,t)=0 & \text{for $t>0$}, \\
u(x,0)=v(x) &\text{for $x\in \R$},
\end{cases}
\]
when the initial condition $v:\R\to[0,1]$ is non-increasing with $v(x) \to 0$ as $x\to
\infty$ and $v(x)\to 1$ as $x\to -\infty$.  We construct the solution as the limit of a sequence $(u_n)_{n\ge
1}$, where each $ u_n$ is the solution of a Fisher-KPP equation with same
initial condition, but with a different non-linear term. 

Recent results of  De Masi \textit{et al.}~\cite{DeMasi2017a} show that this global solution can be identified with the hydrodynamic limit of the so-called $N$-BBM, {\it i.e.} a branching Brownian motion in which the population size is kept constant equal to $N$ by killing the leftmost particle at each branching event.

\end{abstract}

\section{Main results and introduction}

We establish global existence for a  free boundary problem of Fisher-KPP type:
\begin{thm}\label{main thm}
Let $v:\R\to[0,1]$ be a non-increasing function such that $v(x)\to 0$ as $x\to
\infty$ and $v(x)\to 1$ as $x\to-\infty$. 
Let
$\mu_0=\inf\{x\in \R:v(x)<1\}\in \{-\infty\}\cup \R$.
Then there exists
a unique classical solution $(u,\mu)$ with $u\in[0,1]$
to the following free boundary problem:
\begin{equation}\label{FBP}\tag{FBP}
\begin{cases}
\partial_t u =\partial^2_{x} u +u & \text{for $t>0$ and $x>\mu_t$,}\\
u(x,t)=1 &\text{for $t>0$ and $x \leq \mu_t$}, \\
\partial_x u(\mu_t,t)=0 & \text{for $t>0$}, \\
u(x,0)=v(x) &\text{for $x\in \R$}.
\end{cases}
\end{equation}
Furthermore, this unique solution satisfies the following properties:
\begin{itemize}
\item
For every $t>0$, $u(\cdot, t)\in C^1(\R)$, and $\partial_x u \in C(\R\times (0,\infty))$.
\item If $v\uppar1 \le v\uppar2$ are two valid initial conditions and
$(u\uppar i,\mu\uppar i)$ is the solution with initial condition $v\uppar i$, then
$u\uppar 1\le u\uppar 2$ and $\mu\uppar 1\le\mu\uppar 2$. 
\end{itemize}
\end{thm}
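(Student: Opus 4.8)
The plan is to construct the solution as described in the abstract: as a monotone limit of solutions $u_n$ to Fisher-KPP equations with modified nonlinearities. Specifically, for each $n\ge 1$ I would choose a smooth nonlinearity $f_n:[0,1]\to[0,\infty)$ that agrees with the KPP-type behaviour $f_n(u)\approx u$ for $u$ not too close to $1$, but which forces steep gradients near $u=1$ — for instance $f_n(u) = u$ for $u\le 1-1/n$ and $f_n$ decreasing sharply to $0$ (or even negative) on $[1-1/n,1]$, so that the unique bounded solution $u_n$ of $\partial_t u_n = \partial_x^2 u_n + f_n(u_n)$, $u_n(\cdot,0)=v$, stays in $[0,1]$ and develops a region where it is very close to $1$. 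The point is that the level set $\mu_t^{(n)} := \sup\{x : u_n(x,t) \ge 1-1/n\}$ should converge, and in the limit the region $\{u=1\}$ to the left of $\mu_t$ appears together with the Neumann-type condition $\partial_x u(\mu_t,t)=0$. Standard parabolic theory gives existence, uniqueness, regularity and the comparison principle for each $u_n$; monotonicity of $v$ propagates to $x\mapsto u_n(x,t)$, and one arranges $f_{n+1}\le f_n$ so that $u_{n+1}\le u_n$, giving a decreasing limit $u := \lim_n u_n$.

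The key analytic steps, in order, would be: (i) set up the approximating problems and record the basic a priori bounds ($0\le u_n\le 1$, spatial monotonicity, comparison in $n$ and in the initial data); (ii) obtain gradient and parabolic Schauder estimates for $u_n$ that are uniform away from the free boundary, so that the limit $u$ is a classical solution of $\partial_t u = \partial_x^2 u + u$ on the open set $\{u<1\}$; (iii) identify the free boundary $\mu_t = \inf\{x : u(x,t)<1\}$ and show it is finite, continuous (indeed strictly increasing, using that $v(x)\to 0$ so mass must be ``absorbed''), and that $u(x,t)=1$ for $x\le\mu_t$; (iv) prove the crucial boundary relation $\partial_x u(\mu_t,t)=0$ and the $C^1$-regularity of $u(\cdot,t)$ across $\mu_t$ — this is where the choice of $f_n$ pays off, since the steepening mechanism is designed precisely so that no gradient discontinuity can survive in the limit, and one argues by a blow-up / local comparison with travelling-wave-type subsolutions and supersolutions near the free point. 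The monotonicity in the initial data in the final bullet is then inherited directly: if $v^{(1)}\le v^{(2)}$ then $u_n^{(1)}\le u_n^{(2)}$ by the comparison principle for the fixed equation with nonlinearity $f_n$, hence $u^{(1)}\le u^{(2)}$ in the limit, and $\mu^{(1)}_t\le\mu^{(2)}_t$ follows because $\mu^{(i)}_t$ is the infimum of the set where the corresponding $u^{(i)}$ drops below $1$.

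For uniqueness of the free boundary problem itself (as opposed to uniqueness for each $u_n$), the plan is a direct comparison argument at the level of \eqref{FBP}: given two classical solutions $(u,\mu)$ and $(\tilde u,\tilde\mu)$ with the same initial data, I would show $u\le\tilde u$ and $\tilde u\le u$ by sliding / maximum-principle arguments that treat the free boundary carefully — e.g. comparing on the region $\{x>\max(\mu_t,\tilde\mu_t)\}$ and using the Neumann condition $\partial_x u(\mu_t,t)=0$ together with $u=1$ to the left to rule out a first crossing point, combined with Hopf's lemma at the boundary. The $C^1$ regularity of each solution in $x$ is exactly what makes this boundary bookkeeping legitimate.

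The main obstacle I expect is step (iv): showing that the limit $u$ really satisfies $\partial_x u(\mu_t,t)=0$ with $u(\cdot,t)\in C^1(\R)$, rather than merely $u(\cdot,t)$ being Lipschitz with a possible gradient jump at $\mu_t$ (which would correspond to a Stefan-type condition instead of the Neumann condition). Controlling the passage to the limit near the moving free boundary — in particular proving that $\mu_t^{(n)}$ converges and that the gradient $\partial_x u_n(\mu_t^{(n)},t)\to 0$ uniformly — requires careful uniform estimates on $u_n$ in a shrinking neighbourhood of the free boundary, and a good understanding of the shape of $u_n$ there; this is the technical heart of the argument and where most of the work will go. A secondary difficulty is handling the case $\mu_0=-\infty$ (i.e. $v\equiv 1$ on a half-line up to $-\infty$), which should be recovered as a limit of the finite-$\mu_0$ case using the monotonicity in the initial data.
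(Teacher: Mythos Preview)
Your overall strategy --- approximate by semilinear equations and pass to the limit --- matches the paper's, but several of the concrete choices and claims diverge from what actually works. The paper takes the specific nonlinearity $f_n(u)=u-u^n$, which makes the sequence $(u_n)$ \emph{increasing} (since $u^n$ is decreasing in $n$ on $(0,1)$), not decreasing as you propose. This matters: with an increasing limit you get $u=1$ exactly on a set, and the key step for $C^1$ regularity is then the nontrivial lemma that $u_n^n\to 1$ pointwise in the interior of $\{u=1\}$. Once this is known, the representation $u_n(x,t)=p_t\ast v(x)+\int_0^t p_r\ast(u_n-u_n^n)(x,t-r)\,\diffd r$ passes to the limit with $u_n-u_n^n\to u^*:=u\indic{u<1}$, and differentiating under the integral gives $u(\cdot,t)\in C^1$ directly. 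Your proposed route via blow-up and travelling-wave comparison near the free boundary is plausible in spirit but would require substantial new estimates; the paper's mechanism is both more elementary and more explicit. Also, your claim that $\mu_t$ is strictly increasing is false: nothing in the problem forces monotonicity of the boundary, and the paper only proves continuity (via separate one-sided bounds and an analyticity argument to rule out jumps).

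The more serious gap is in uniqueness. Your plan to use a sliding/maximum-principle argument with Hopf's lemma at the free boundary is problematic because a classical solution here only has $\mu$ continuous and $u\in C^{2,1}$ on the open region $\{x>\mu_t\}$; applying Hopf at a moving boundary of such low regularity is delicate and may simply fail. The paper avoids this entirely: it shows via Feynman--Kac that any classical solution satisfies $u(x,t)=\E_x[e^\tau\indic{\tau<t}+e^t v(B_t)\indic{\tau=t}]$ with $\tau$ the hitting time of the boundary, and then sandwiches $u(\cdot,n\delta)$ between two explicit discrete-time iterates $[e^\delta G_\delta C_{e^{-\delta}}]^n v$ and $[C_1 e^\delta G_\delta]^n v^{\delta,+}$ built from the heat semigroup and a cut-off operator. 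The $L^1_{\mathrm{loc}}$ distance between these iterates is $O(e^{t/n}-1)$ uniformly over all classical solutions, which forces uniqueness. This argument uses no boundary regularity beyond continuity of $\mu$ and is the part of your plan that would need the most rethinking.
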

We say that $(u,\mu)$ is a {\it classical solution} to
\eqref{FBP} above if $\mu_t\in \R$ $\forall t>0$, $t\mapsto \mu_t$ is
continuous,  $u:\R \times (0,\infty)\to [0,1]$, $u\in C^{2,1}(\{(x,t):t>0,\, x>\mu_t\})\cap C(\R\times (0,\infty))$, $(u,\mu)$ satisfies  the equation
\eqref{FBP}, and $u(\cdot,t)\to v(\cdot)$ in $L^1_\text{loc}$ as $t\searrow 0$. 
\begin{rmk}
We will show that $u(t,x) \to v(x)$ at all points of continuity of $v$ as $t\searrow 0$ (since $v$ is non-increasing, it is differentiable almost everywhere).
\end{rmk}
\begin{rmk}
If instead $v(x)\to l>0$ as $x\to\infty$, then a classical
solution $(u,\mu)$ of~\eqref{FBP} exists for $t<t_c=-\log l$, with $\mu_t\to\infty$ as
$t\nearrow t_c$.
\end{rmk}
\begin{rmk}
As discussed below, the condition that $v$ is
non-increasing can be relaxed to some extent.
\end{rmk}

Our motivation for studying the problem \eqref{FBP} stems from its connection with the so-called $N$-BBM, a variant of branching Brownian motion in $\R$ in which the number of active particles is kept constant (and equal to $N$) by removing the leftmost particle each time a particle branches. More details are given in Section~\ref{subsec:context} below, but in a nutshell, De Masi {\it et al.}~\cite{DeMasi2017a} show that as $N \to \infty$, under appropriate conditions on the initial configuration of particles, the $N$-BBM has a hydrodynamic limit whose cumulative distribution can be identified with the solution of~\eqref{FBP}, provided such a solution exists.

\medskip

The overall idea behind the proof is to construct $u$ as the limit of
a sequence of functions $u_n$, where, for each $n$,
$u_n$ satisfies an $n$-dependent
non-linear equation, but where all the $u_n$ have the same initial
condition.
More precisely, let $v:\R\to[0,1]$ be a measurable
function  and, for $n\ge 2$,
let  $(u_n(x,t), x\in \R, t\ge 0)$ be the
solution to 
\begin{equation}
\begin{cases}
\partial_t u_n=\partial_x^2u_n + u_n -u_n^n&\text{for $x\in \R$ and
$t>0$},\\
u_n(x,0)=v(x) &\text{for $x\in \R$} .
\end{cases}
\label{mainn}
\end{equation}
For each $n\ge 2,$ this is a version of the celebrated 
Fisher-KPP equation about which much is known (see e.g. \cite{kpp,Aronson1975,McKean1975,Uchiyama1978,Hamel2013,Nolen2016}). In particular,
\begin{itemize}
\item $u_n$ exists and is unique, 
\item $u_n(x,t)\in (0,1)$ for $x\in \R$ and $t> 0$  (unless $v \equiv 0$ or $v \equiv 1$).
\end{itemize}
Since the comparison principle applies, we see furthermore that for every
$x\in \R, t> 0$ fixed, the sequence $n\mapsto u_n(x,t)$ is increasing. 
Therefore, the following pointwise limit is well defined:
\begin{equation}
u(x,t):=\lim_{n\to\infty} u_n(x,t),
\label{mainu}
\end{equation}
with $u(x,t)\in(0,1]$ for $t>0$ (unless $v \equiv 0$). Indeed, in most of the cases we
are interested in, there are regions where $u(x,t)=1$.

We have the following results on $u$:
\begin{thm}\label{thm u}
Let $v:\R\to[0,1]$ be a measurable function. The function $u(x,t)$ as defined by~\eqref{mainn} and~\eqref{mainu} satisfies the following properties:
\begin{itemize}
\item $u$ is continuous on $\R\times (0,\infty)$ and, for $t>0$, $u(\cdot,t)$ is Lipschitz continuous.
\item $u(\cdot,t)\to v(\cdot)$ in $L^1_\text{loc}$ as $t\searrow0$, and if $v$ is continuous at $x$ then $u(t,x)\to v(x)$ as $t\searrow 0$.
\item At  any $(x,t)$ with $t>0$ such that $u(x,t)<1$, the function $u$ is continuously differentiable
in $t$ and twice continuously
differentiable in $x$, and
satisfies
$$\partial_t u = \partial_x^2u+u.$$
\item $u$ satisfies the following semigroup property: for any $t> 0$
and any $t_0\ge0$, $u(\cdot,t+t_0)$ can be obtained as the solution at time~$t$ 
to
\eqref{mainn} and \eqref{mainu} with an initial condition
$u(\cdot,t_0)$.
\item If $v^{(1)} \le v^{(2)}$ are two measurable functions and
$u^{(i)}$ is the solution to~\eqref{mainn} and~\eqref{mainu} with initial condition $v^{(i)}$, then
$u^{(1)}\le u^{(2)}$. 
\end{itemize}
\end{thm}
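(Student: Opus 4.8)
\emph{Strategy.} Every assertion follows from three ingredients, used throughout in the regime $0\le u_n-u_n^n\le 1$ (uniformly in $n$) and $u_n\nearrow u$ (the monotonicity recorded above): the comparison principle for~\eqref{mainn}, Duhamel's formula against the heat kernel $G_t(x)=(4\pi t)^{-1/2}e^{-x^2/(4t)}$, and interior parabolic regularity. \emph{Continuity and the Lipschitz bound.} Writing $u_n(\cdot,t)=G_t*v+\int_0^t G_{t-s}*(u_n-u_n^n)(\cdot,s)\,\diffd s$ and using $\|\partial_x G_t\|_{L^1(\R)}\le C\,t^{-1/2}$ together with $\|v\|_\infty,\ \|u_n-u_n^n\|_\infty\le 1$, one gets $|\partial_x u_n(x,t)|\le C(t^{-1/2}+t^{1/2})$ with $C$ absolute, so each $u_n(\cdot,t)$—hence $u(\cdot,t)=\lim_n u_n(\cdot,t)$—is Lipschitz with this constant. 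For equicontinuity in $t$ on a strip $\R\times[t_0,\infty)$ I would split $u_n(\cdot,t)-u_n(\cdot,s)=(G_{t-s}-\mathrm{Id})u_n(\cdot,s)+\int_s^t G_{t-\sigma}*(u_n-u_n^n)(\cdot,\sigma)\,\diffd\sigma$ and bound the first term via the spatial Lipschitz estimate at time~$s$ and the second by $|t-s|$, again uniformly in $n$. Thus $\{u_n\}$ is equicontinuous on compact subsets of $\R\times(0,\infty)$, so $u_n\to u$ locally uniformly and $u$ is continuous. (Interior $L^p$ or Schauder estimates for the equation with the uniformly bounded forcing $u_n-u_n^n$ give the same conclusion.)

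\emph{Behaviour as $t\searrow 0$.} From Duhamel and $u_n-u_n^n\ge 0$ we get $G_t*v\le u_n(\cdot,t)$; and $e^t G_t*v$ is a supersolution of~\eqref{mainn} with initial datum $v$, so $u_n(\cdot,t)\le e^t G_t*v$. Letting $n\to\infty$, $G_t*v\le u(\cdot,t)\le e^t G_t*v$, whence $0\le u(\cdot,t)-G_t*v\le e^t-1\to 0$ uniformly in $x$. Since $G_t*v\to v$ in $L^1_{\mathrm{loc}}$ and at every continuity point of $v$, the same holds for $u$.

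\emph{The equation at points where $u<1$.} This is the heart of the matter. If $u(x_0,t_0)<1$, continuity of $u$ gives a neighbourhood $Q'$ of $(x_0,t_0)$ in $\R\times(0,\infty)$ and $\delta>0$ with $u<1-2\delta$ on $Q'$; because $u_n\le u$, in fact $u_n\le 1-2\delta$ on $Q'$ for every $n$, so $0\le u_n^n\le(1-2\delta)^n\to 0$ and $g_n:=u_n-u_n^n\to u$, both uniformly on $Q'$. On $[0,1-2\delta]$ the Lipschitz constant of $r\mapsto r-r^n$ is at most $1+n(1-2\delta)^{n-1}$, bounded uniformly in $n$; together with the uniform parabolic Hölder bound on $u_n$ from the first step, this makes $\{g_n\}$ bounded in a parabolic Hölder space on $Q'$. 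Interior Schauder estimates then bound $\{u_n\}$ in $C^{2+\alpha,1+\alpha/2}(Q)$ for any $Q\Subset Q'$, and Arzelà--Ascoli combined with $u_n\to u$ pointwise forces $u\in C^{2,1}(Q)$ with $\partial_t u_n\to\partial_t u$ and $\partial_x^2 u_n\to\partial_x^2 u$ locally uniformly. Passing to the limit in $\partial_t u_n=\partial_x^2 u_n+g_n$ yields $\partial_t u=\partial_x^2 u+u$ on $Q$, in particular at $(x_0,t_0)$.

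\emph{Monotone dependence and the semigroup property.} Monotonicity in the initial datum is immediate: $v^{(1)}\le v^{(2)}$ gives $u_n^{(1)}\le u_n^{(2)}$ by comparison and one lets $n\to\infty$. For the semigroup property (trivial when $t_0=0$), fix $t_0>0$, let $\tilde u_n$ solve~\eqref{mainn} with initial datum $u(\cdot,t_0)$ and $v_{n,m}$ solve~\eqref{mainn} with initial datum $u_m(\cdot,t_0)$; as in the excerpt, $\tilde u_n\nearrow\tilde u$ for some limit $\tilde u$. From $u_n(\cdot,t_0)\le u(\cdot,t_0)$ and comparison, $u_n(\cdot,t+t_0)\le\tilde u_n(\cdot,t)$, hence $u(\cdot,t+t_0)\le\tilde u$. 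Conversely, for fixed $n$ the nonnegative difference $h:=\tilde u_n-v_{n,m}$ satisfies $\partial_t h\le\partial_x^2 h+K_n h$ with $K_n$ the Lipschitz constant of $r\mapsto r-r^n$ on $[0,1]$ and initial datum $u(\cdot,t_0)-u_m(\cdot,t_0)$, so $h(\cdot,t)\le e^{K_n t}G_t*h(\cdot,0)\to 0$ as $m\to\infty$ by dominated convergence, i.e.\ $v_{n,m}(\cdot,t)\to\tilde u_n(\cdot,t)$. Since for $m\ge n$ the function $v_{n,m}$ is a subsolution of the equation~\eqref{mainn} with exponent $m$ and the same initial datum $u_m(\cdot,t_0)$ as $u_m(\cdot,\cdot+t_0)$, comparison gives $v_{n,m}(\cdot,t)\le u_m(\cdot,t+t_0)\le u(\cdot,t+t_0)$, so $\tilde u_n(\cdot,t)\le u(\cdot,t+t_0)$ and $\tilde u\le u(\cdot,\cdot+t_0)$. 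Combining the two bounds, $\tilde u=u(\cdot,\cdot+t_0)$. I expect the step on the equation where $u<1$ to be the real obstacle—upgrading the soft pointwise information ``$u<1$'' to uniform-in-$n$ interior regularity, which works precisely because the nonlinearities $r\mapsto r-r^n$ have Lipschitz constants that stay bounded away from $r=1$; the nested limits in the semigroup step are the other point needing care.
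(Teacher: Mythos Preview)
Your proof is correct, and in several places it follows the paper closely (the Duhamel/Uchiyama Lipschitz bound, the sandwich $G_t*v\le u\le e^tG_t*v$ for the initial trace, and the comparison argument for monotone dependence are essentially identical to the paper's Lemmas~4.2--4.3 and the final paragraph of Section~4). The two places where you genuinely diverge are the PDE on $\{u<1\}$ and the semigroup property.

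For the PDE on $\{u<1\}$, the paper avoids Schauder theory entirely: it fixes a rectangle $[a,b]\times[t_0,t_1]\subset\{u<1\}$, writes the Feynman-Kac representation $u_n(x',t')=\E_{x'}[u_n(B_\tau,t'-\tau)\exp\!\int_0^\tau(1-u_n^{n-1})]$ with $\tau$ the exit time from the rectangle, observes that $u_n^{n-1}\to 0$ on the rectangle, and passes to the limit to get $u(x',t')=\E_{x'}[u(B_\tau,t'-\tau)e^\tau]$; a standard converse Feynman-Kac lemma then identifies $u$ as a classical solution of $\partial_t u=\partial_x^2 u+u$. Your route via uniform Lipschitz control of $r\mapsto r-r^n$ on $[0,1-2\delta]$ and interior Schauder is perfectly sound---it is the ``pure PDE'' counterpart of their probabilistic argument---and it has the advantage of not needing the Feynman-Kac machinery developed elsewhere in the paper; the cost is invoking parabolic Schauder theory as a black box.

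For the semigroup property, the paper's argument (Lemma~4.4) is shorter than yours: writing $\tilde u_n$ for the solution with datum $u(\cdot,t_0)$, the lower bound $u_n\le\tilde u_n$ is the same as yours, but for the upper bound they subtract the two Feynman-Kac formulas for $\tilde u_n$ and $u_n$ directly and obtain $0\le\tilde u_n(x,t)-u_n(x,t)\le e^{t-t_0}\E_x[u(B_{t-t_0},t_0)-u_n(B_{t-t_0},t_0)]\to 0$ by dominated convergence. Your triangulation through the auxiliary $v_{n,m}$ with the Gronwall bound $h\le e^{K_n t}G_t*h(\cdot,0)$ works, but it is a detour; in particular your $K_n=n-1$ blows up, so you must keep $n$ fixed while sending $m\to\infty$ first, exactly as you do. Both arguments exploit the same mechanism (the gap $u-u_n$ at time $t_0$ vanishes as $n\to\infty$), the paper just packages it more economically through Feynman-Kac.
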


The existence result in Theorem~\ref{main thm} is then a consequence of the
following result:
\begin{prop}  \label{prop mu} 
Suppose that $v$ (and $\mu_0$)
is as in Theorem \ref{main thm}, and define $u(x,t)$ as in~\eqref{mainn} and~\eqref{mainu}. Then
there exists a map $t\mapsto \mu_t$ with $\mu_t\in \R$ $\forall t>0$ and $\mu_t\to\mu_0$ as $t\searrow 0$
such that
\begin{equation}\label{uandmu}
 u(x,t)=1 \, \Leftrightarrow \, x\le \mu_t\qquad\text{for $t>0$}.
\end{equation}
Furthermore, $t\mapsto\mu_t$ is continuous and
$u(\cdot,t)\in C^1(\R)$ for $t>0$ with $\partial_x u \in C(\R\times (0,\infty))$.
\end{prop}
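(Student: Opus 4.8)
The plan is to use the monotonicity and regularity properties of $u$ from Theorem \ref{thm u} together with a careful analysis of the set $\{u(\cdot,t)=1\}$.

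First, fix $t>0$. Since $v$ is non-increasing, each approximating initial condition is non-increasing, and the comparison principle for \eqref{mainn} (applied to $u_n(\cdot,t)$ and its spatial translates) shows that $x\mapsto u_n(x,t)$ is non-increasing for every $n$; passing to the limit, $x\mapsto u(x,t)$ is non-increasing. Hence the set $\{x: u(x,t)=1\}$ is of the form $(-\infty,\mu_t]$ or $(-\infty,\mu_t)$ for some $\mu_t\in\{-\infty\}\cup\R\cup\{+\infty\}$ (with the convention that it may be empty or all of $\R$); since $u(\cdot,t)$ is continuous by Theorem \ref{thm u}, the set is closed, so it is $(-\infty,\mu_t]$ when nonempty. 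This gives \eqref{uandmu} once we know $\mu_t\in\R$. To rule out $\mu_t=+\infty$: if $u(\cdot,t)\equiv 1$ then by the semigroup property and the PDE $\partial_t u=\partial_x^2 u+u$ the solution would stay identically $1$, contradicting $u(x,t)\to v(x)\to 0$ as $t\searrow 0$ at points of continuity of $v$; more quantitatively, each $u_n\le 1$ and $u_n$ is bounded above by the solution of the linear equation $\partial_t w=\partial_x^2 w+w$ with data $v$, which for $x$ large and $t$ small is strictly below $1$. To rule out $\mu_t=-\infty$: here one uses that $v\to 1$ as $x\to-\infty$ (so $\mu_0<+\infty$) and a sub-solution argument — for $x$ very negative, $u_n(x,t)$ is close to $1$ uniformly, and in fact we expect $\mu_t\ge \mu_0 - C$ for a finite constant, which will follow from comparing with a travelling-wave-type or constant-in-space sub-solution of \eqref{mainn}. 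Actually the cleanest route for the lower bound on $\mu_t$ is the comparison statement already in Theorem \ref{thm u}: comparing with the initial condition $v\equiv\indic{x\le \mu_0}$ (or a slightly shifted Heaviside), whose associated $u$ we can control, gives $\mu_t>-\infty$.

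Next I would establish continuity of $t\mapsto \mu_t$ and the $C^1$ regularity in space. For the $C^1$ regularity: at any point $(x_0,t_0)$ with $u(x_0,t_0)<1$, Theorem \ref{thm u} gives that $u$ is $C^{2,1}$ there and solves the heat-type equation, so $\partial_x u$ is continuous on the open set $\{u<1\}\cap(\R\times(0,\infty))$. On the interior of $\{u=1\}$, $\partial_x u=0$. The only issue is matching at the free boundary: I need $\partial_x u(x,t)\to 0$ as $x\searrow\mu_t$ (and more generally joint continuity of $\partial_x u$ across the boundary). Since $u\le 1=u(\mu_t,t)$ and $u(\cdot,t)$ is non-increasing with a maximum at $x=\mu_t$, the one-sided derivative from the right, if it exists, is $\le 0$; and since $u(\mu_t-\varepsilon,t)=1=u(\mu_t,t)$, the left derivative is $0$. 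The hard part is to show the right derivative equals $0$ and not something strictly negative — i.e. that the free boundary condition $\partial_x u(\mu_t,t)=0$ emerges automatically. I expect to prove this via a local barrier argument near the free boundary: suppose for contradiction $\partial_x u(\mu_t^+,t_0)=-a<0$; then near $(\mu_t,t_0)$, $u$ looks like $1-a(x-\mu_s)+\dots$ on the right, and one builds a sub-solution of some $u_n$-equation that forces $u_n$, hence $u$, above $1$ slightly to the left — contradiction. Alternatively, and I think more robustly, one passes to the limit in the Fisher-KPP equations: the approximate solutions $u_n$ are smooth and satisfy $\partial_t u_n=\partial_x^2 u_n+u_n-u_n^n$; on any region where $u_n$ stays bounded away from $1$, parabolic estimates give $C^{1}$ bounds in $x$ uniform in $n$, and near the free boundary the term $u_n^n$ acts as an increasingly strong absorption that in the limit pins $u$ at $1$ with zero flux (this is the standard mechanism by which such penalized problems converge to obstacle/free-boundary problems). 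So the strategy is to get uniform-in-$n$ gradient estimates for $u_n$ up to the boundary of $\{u_n\ge 1-\delta\}$ and let $n\to\infty$ then $\delta\to 0$.

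For continuity of $t\mapsto\mu_t$: upper semicontinuity is relatively soft — if $t_k\to t$ and $\mu_{t_k}\ge \mu_t+\varepsilon$ along a subsequence, then $u(\mu_t+\varepsilon/2,t_k)=1$, and by continuity of $u$ on $\R\times(0,\infty)$ we get $u(\mu_t+\varepsilon/2,t)=1$, contradicting the definition of $\mu_t$ (using that $u(\cdot,t)$ is strictly below $1$ to the right of $\mu_t$ — which itself needs the strong maximum principle or a similar argument showing the contact set has no isolated boundary behavior). Lower semicontinuity is the subtler direction and is where I expect real work: I need that $\mu$ cannot jump down, i.e. that if $u(x_0,t_0)<1$ then $u(x_0,t)<1$ for $t$ slightly less than $t_0$ as well — equivalently, the set $\{u=1\}$ does not suddenly grow. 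This should follow from continuity of $u$ together with a quantitative non-degeneracy estimate at the free boundary (of the type $1-u(\mu_{t_0}+r,t_0)\ge cr$ for small $r>0$, which is exactly the content of the barrier/gradient argument above giving a strictly negative one-sided... wait, no — we want $\partial_x u(\mu_t^+,t)=0$, so non-degeneracy must be at a higher order, $1-u(\mu_t+r,t)\asymp r^2$ as in the classical obstacle problem). So the main obstacle, and the technical heart of the proposition, is the precise behaviour of $u$ near the free boundary: proving simultaneously that $\partial_x u(\mu_t,t)=0$, that $\partial_x u$ extends continuously across the boundary, and that the contact set varies continuously. I would handle all three by establishing quantitative estimates on $1-u$ near $\{u=1\}$, uniformly in the relevant parameters, most naturally by exploiting the structure of the penalized equations \eqref{mainn} and passing to the limit.
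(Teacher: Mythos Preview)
Your outline gets the soft parts right (monotonicity of $u(\cdot,t)$, the definition of $\mu_t$, upper semicontinuity of $\mu$ from continuity of $u$), but it misses both key technical ideas and inverts the logical order. The paper proves continuity of $t\mapsto\mu_t$ \emph{first} and then uses it to obtain the $C^1$ regularity, not the other way around. On continuity, you have the hard direction confused: the statement ``if $u(x_0,t_0)<1$ then $u(x_0,t)<1$ for $t$ slightly less than $t_0$'' is immediate from continuity of $u$; the genuine obstruction is a jump \emph{up} from the left, $\mu_{t^-}<\mu_t$, i.e.\ $\{u=1\}$ suddenly growing. The paper does not use non-degeneracy or obstacle-problem estimates here. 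Instead it argues by analyticity: if $\mu_{t^-}=a<b=\mu_t$, pick $c\in(a,b)$ and solve the linear boundary value problem $\partial_s\tilde u=\partial_x^2\tilde u+\tilde u$ on $\{x>c\}\times[t_0,t]$ with lateral data $\tilde u(c,s)=u(c,s)$; this coincides with $u$, is analytic in $x$ at time $t$, yet equals $1$ on $[c,b]$ and tends to $0$ at $+\infty$ --- contradiction. Your barrier/non-degeneracy route is not concrete enough to see whether it could replace this. Separately, both $\mu_t>-\infty$ and the one-sided increment bound $\mu_{t+\epsilon}\ge\mu_t-a_\epsilon$ (needed to get c\`adl\`ag) come from a specific subsolution comparison with $\underline u=e^sG_s(\eta\indic{[-a,a]})$, tuning $(\eta,a)$ so that $\underline u$ first touches $1$ at time $\epsilon$; a Heaviside comparison does not obviously give this.

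For the $C^1$ claim the paper does not use barriers or uniform-in-$n$ gradient estimates near the free boundary. The crucial lemma is that $u_n^n(x,t)\to 1$ on the \emph{interior} of $\{u=1\}$, proved by a delicate ODE comparison after bounding $\partial_y^2$ of an auxiliary solution started from $(1-\epsilon/2)\indic{[-\epsilon^{0.49},\epsilon^{0.49}]}$. Once $\mu$ is continuous, the free boundary $\{(\mu_t,t):t>0\}$ has measure zero, so $u_n-u_n^n\to u^*:=u\,\indic{u<1}$ almost everywhere; passing to the limit in the Duhamel representation $u_n=p_t\conv v+\int_0^t p_r\conv(u_n-u_n^n)\,\diffd r$ yields $u=p_t\conv v+\int_0^t p_r\conv u^*\,\diffd r$, which can be differentiated in $x$ under the integral to give $\partial_x u\in C(\R\times(0,\infty))$. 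The condition $\partial_x u(\mu_t,t)=0$ then follows automatically. Your ``absorption mechanism'' heuristic gestures at this but does not isolate the essential fact that $u_n^n\to1$ on the coincidence set, without which the limiting integral representation is unavailable.
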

By combining Theorem~\ref{thm u} and Proposition~\ref{prop mu}, we have that if $v$ is as in Theorem~\ref{main thm} then $(u,\mu)$ is a classical solution of \eqref{FBP}.

\begin{rmk}
For an arbitrary measurable initial condition $v$, for $t>0$, $u(\cdot,t)$ is obviously $C^1$ in the interior of the
region where $u=1$, and by Theorem~\ref{thm u} it is $C^1$ in the region where $u<1$. The difficulty in proving Proposition~\ref{prop mu} is to
show that $u(\cdot,t)$ is also $C^1$ at the boundary between these two domains.
\end{rmk}
\begin{rmk}
It turns out that the proof that $u(\cdot,t)$ is $C^1$ holds whenever the topological boundary
between the (two-dimensional) domains $\{u=1\}$ and $\{u<1\}$ has measure zero. (In the case where $v$ is non-increasing, this 
is implied by the existence of a continuous map $t\mapsto\mu_t$ satisfying \eqref{uandmu}.)
This means that it should be possible to show that $u(\cdot,t)$ is $C^1$ for any ``reasonable''
initial condition.
\end{rmk}
\begin{rmk}
The condition that $v$ is non-increasing in Theorem~\ref{main thm} is only
used in the proof of Proposition~\ref{prop mu} to show the
existence of a continuous boundary $t\mapsto\mu_t$ as in~ \eqref{uandmu}.
\end{rmk}

The idea of using the limit of $(u_n)_{n \ge 1}$ as the solution to \eqref{FBP}
first appeared in \cite{Berestycki2017} and the present article puts this
intuition on a rigorous footing.

The rest of the article is organised as follows: the next section is devoted
to putting  our result in the context of several recent works on related
problems. Next, in Section~\ref{sec:FK}, we present the precise
versions of the Feynman-Kac representation that we shall use in the rest of the proof. 
The proof of one of these Feynman-Kac results will be postponed until Section~\ref{sec:FK proofs}.
We establish
Theorem~\ref{thm u} in Section~\ref{sec:thm u}, and in
Section~\ref{sec:proof1} we prove
Proposition~\ref{prop mu}.
In Section~\ref{sec:uniq}, we complete the proof of Theorem~\ref{main thm} by proving the uniqueness of the classical
solution of~\eqref{FBP}.


\section{Context} \label{subsec:context}

Let $\omega$ be a probability measure on $\R$. 
Then define $v:\R \to [0,1]$ by setting
$$ v(x)=\omega [x,\infty).$$
Note that $v$ is non-increasing, and that $v(x)\to 0$ as $x\to \infty$ and $v(x)\to 1$ as $x\to -\infty$.
Therefore, by Theorem~\ref{main thm}, there exists a unique classical solution $(u,\mu)$ to the free boundary problem~\eqref{FBP}, and $\partial_x u$ is continuous on $\R \times (0,\infty)$.

Let $\rho =-\partial_x u$.
The following result is an easy consequence of Theorem~\ref{main thm} and its proof.
\begin{corr} \label{cor:rhofbp}
Let $\omega$ be a probability measure on $\R$
and let $\mu_0=\inf\{x\in \R:\omega [x,\infty)<1\}\in \R \cup \{-\infty\}$.
Then $(\rho, \mu)$ constructed as above from the solution of~\eqref{FBP} with initial condition $v(x)=\omega[x,\infty)$ is the unique classical
solution with $\rho \geq 0$ to the following free boundary problem:
\begin{equation}\label{FBP'}\tag{FBP$'$}
\begin{cases}
\partial_t \rho =\partial^2_{x} \rho +\rho & \text{for $t>0$ and $x>\mu_t$,}\\
\rho(\mu_t,t)=0, \quad \int_{\mu_t}^\infty \rho(y,t)\,\diffd y=1 & \text{for $t>0$}, \\
\rho(\cdot, t)\diffd\lambda \to \diffd\omega(\cdot) &\text{in the vague topology as $t\searrow 0$}.
\end{cases}
\end{equation}
\end{corr}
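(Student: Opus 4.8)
The corollary contains an existence assertion — that the pair $(\rho,\mu)$ built from the solution of \eqref{FBP} solves \eqref{FBP'} — and a uniqueness assertion, and I would treat them separately. For existence, the plan is to read each line of \eqref{FBP'} off the structure of \eqref{FBP}. Interior parabolic regularity makes $u$ smooth on $\mathcal{O}:=\{(x,t):t>0,\ x>\mu_t\}$, so differentiating $\partial_t u=\partial_x^2u+u$ in $x$ gives $\partial_t\rho=\partial_x^2\rho+\rho$ there, and the condition $\partial_x u(\mu_t,t)=0$ is precisely $\rho(\mu_t,t)=0$. Since $u(\mu_t,t)=1$ while $u(x,t)\to0$ as $x\to\infty$ — the latter because $u=\lim_n u_n$ with each $u_n$ bounded above by the linear solution $e^t(G_t\conv v)$, $G_t$ the heat kernel, and $v(x)\to0$ — the fundamental theorem of calculus yields $\int_{\mu_t}^\infty\rho(y,t)\,\diffd y=1$; nonnegativity of $\rho$ comes from $u(\cdot,t)$ being non-increasing, which is inherited from $v$ through the comparison property in Theorem~\ref{thm u}. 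For the initial trace I would test against $\phi\in C_c^1(\R)$: integration by parts gives $\int_{\R}\phi\,\rho(\cdot,t)\,\diffd\lambda=\int_{\R}\phi'\,u(\cdot,t)\,\diffd\lambda$, and $u(\cdot,t)\to v$ in $L^1_{\mathrm{loc}}$ together with $0\le u\le1$ lets one pass to the limit, obtaining $\int_{\R}\phi'\,v\,\diffd\lambda$, which equals $\int_{\R}\phi\,\diffd\omega$ by Fubini; this is vague convergence of $\rho(\cdot,t)\,\diffd\lambda$ to $\omega$.

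For uniqueness, let $(\tilde\rho,\tilde\mu)$ be any classical solution of \eqref{FBP'} with $\tilde\rho\ge0$. The plan is to set $\tilde u(x,t):=\int_x^\infty\tilde\rho(y,t)\,\diffd y$ for $x\ge\tilde\mu_t$ and $\tilde u(x,t):=1$ for $x\le\tilde\mu_t$ (the two definitions agree at $x=\tilde\mu_t$ by the mass condition, and $0\le\tilde u\le1$), and to check that $(\tilde u,\tilde\mu)$ is a classical solution of \eqref{FBP} with initial datum $v$. On $\mathcal{O}$ one has $\partial_x\tilde u=-\tilde\rho$ and $\partial_x^2\tilde u=-\partial_x\tilde\rho$, and integrating $\partial_t\tilde\rho=\partial_x^2\tilde\rho+\tilde\rho$ over $(x,\infty)$ gives $\partial_t\tilde u=\partial_x^2\tilde u+\tilde u$; the free boundary condition reads $\partial_x\tilde u(\tilde\mu_t,t)=-\tilde\rho(\tilde\mu_t,t)=0$; continuity of $t\mapsto\tilde\mu_t$ is assumed; $\{\tilde u=1\}=\{x\le\tilde\mu_t\}$ because $\tilde\rho$, a nonnegative non-trivial solution of a linear parabolic equation, is strictly positive on $\mathcal{O}$ by the strong maximum principle, so $\tilde u<1$ there; and the initial trace follows because the $\tilde\rho(\cdot,t)\,\diffd\lambda$ are probability measures converging vaguely, hence — all having total mass one — weakly, to $\omega$, so by the portmanteau theorem the tails $\tilde u(x,t)=\int_x^\infty\tilde\rho(y,t)\,\diffd y$ converge to $\omega[x,\infty)=v(x)$ at every continuity point of $v$, i.e. Lebesgue-a.e., hence in $L^1_{\mathrm{loc}}$ since $0\le\tilde u\le1$. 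With $(\tilde u,\tilde\mu)$ known to be a classical solution of \eqref{FBP}, Theorem~\ref{main thm} forces $\tilde u=u$; the free boundaries then also coincide — $\tilde\mu_t$ is the right endpoint of $\{x:\tilde u(x,t)=1\}$ by the strict positivity of $\tilde\rho$, and $\mu_t$ likewise by Proposition~\ref{prop mu} — and therefore $\tilde\rho=-\partial_x\tilde u=-\partial_x u=\rho$.

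The step that needs genuine care is the verification that $\tilde u$ really is a classical solution of \eqref{FBP}: justifying $\partial_t\tilde u=\partial_x^2\tilde u+\tilde u$ on $\mathcal{O}$ requires integrating the equation for $\tilde\rho$ up to $x=+\infty$ (hence differentiating under the integral sign and using $\partial_x\tilde\rho(x,t)\to0$ as $x\to\infty$), and establishing $\tilde u\in C(\R\times(0,\infty))$ across the free boundary needs a Scheff\'e-type argument using $\tilde\rho\ge0$, conservation of total mass, and continuity of $t\mapsto\tilde\mu_t$. Both rest on whatever decay is built into the definition of a classical solution of \eqref{FBP'}, or can be recovered from $\tilde\rho\ge0$, $\int_{\R}\tilde\rho(\cdot,t)\,\diffd\lambda=1$ and interior estimates for the heat equation; alternatively one can first obtain $\partial_t\tilde u=\partial_x^2\tilde u+\tilde u$ in the distributional sense on $\mathcal{O}$ and upgrade it by interior parabolic regularity. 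Everything else reduces to Theorems~\ref{main thm} and~\ref{thm u} and Proposition~\ref{prop mu}.
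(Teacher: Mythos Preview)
Your proposal is correct and follows exactly the route the paper signals: the paper offers no proof beyond the remark that the corollary ``is an easy consequence of Theorem~\ref{main thm} and its proof,'' and your argument is precisely that consequence spelled out --- reading off each line of \eqref{FBP'} from \eqref{FBP} for existence, and for uniqueness integrating a competing $(\tilde\rho,\tilde\mu)$ to produce a classical solution of \eqref{FBP}, then invoking Theorem~\ref{main thm}. The technical caveats you flag (decay at $+\infty$ when integrating the PDE for $\tilde\rho$, continuity of $\tilde u$ across the free boundary) are real but routine, and your suggested remedies (interior estimates, or passing through the distributional formulation and upgrading by parabolic regularity) are adequate.
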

We say that $(\rho,\mu)$ is a {\it classical solution} to~\eqref{FBP'} above if $\mu_t \in \R$ $\forall t>0$, $t\mapsto \mu_t$ is continuous,  $\rho:\R \times (0,\infty) \to [0,\infty)$, $\rho\in C^{2,1}(\{(x,t):t>0,\, x>\mu_t\})\cap C(\R\times (0,\infty))$, and $(\rho,\mu)$ satisfies  the equation
\eqref{FBP'}. 

This result improves on a recent result of Lee~\cite{Lee2017}, where local existence of a solution to~\eqref{FBP'} is shown (i.e.~existence of a solution on a time interval $[0,T]$ for some $T>0$), under the additional assumptions that $\omega$ is absolutely continuous with respect to Lebesgue measure with probability density $\phi\in C^2_c(\R)$, and that there exists $\mu_0\in \R$ such that $\phi(\mu_0)=0$, $\phi '(\mu_0)=1$ and $\int_{\mu_0}^\infty \phi (x)\,\diffd x=1$.

\medskip 

In \cite{DeMasi2017a}, De Masi et al. study the hydrodynamic limit of the $N$-BBM and its relationship with the free boundary problem~\eqref{FBP'}. The $N$-BBM is a variant of branching Brownian motion in which the number of active particles is kept constant (and equal to $N$) by removing the leftmost particle each time a particle branches. 

We shall now define this particle system more precisely.
Suppose that $\phi \in L^1(\R)$ is a probability density function which satisfies (a) $\|\phi\|_\infty<\infty$ and (b) $\int_r^\infty \phi(x)\,\diffd x=1$ for some $r\in \R$.
Let $X^1_0,\ldots, X^N_0$ be i.i.d.~with density $\phi$. 
At time $0$, the $N$-BBM consists of $N$ particles at locations $X^1_0,\ldots, X^N_0$.
These particles move independently according to Brownian motions, and each particle independently at rate $1$ creates a new particle at its current location.
Whenever a new particle is created, the leftmost particle is removed from the particle system.

Let $X_t=\{X^1_t,\ldots, X^N_t\}$ denote the set of particle locations at time $t$.
Let $\pi_t^{(N)}$ be the empirical distribution induced by the particle system at time $t$, i.e.~for $A\subset \R$, let
\[
\pi_t^{(N)}(A)= \frac1N |X_t \cap A|.
\]
De Masi et al.~prove in~\cite{DeMasi2017a} that for each $t\geq 0$ there exists a probability density function $\psi(\cdot,t):\R\to [0,\infty)$ such that, for any $a\in \R$,
$$
\lim_{N\to \infty} \pi^N_t [a,\infty)
=\int_a^\infty \psi (r,t) \,\diffd r 
\qquad \text{ a.s. and in }L^1.
$$
Moreover, they show that if $(\rho,\mu)$ is a classical solution of~\eqref{FBP'} with initial condition $\omega$ given by $\diffd\omega = \phi\, \diffd\lambda$ then $\psi=\rho$.
The following result is then a direct consequence of Theorems~1 and~2 in~\cite{DeMasi2017a} and our Corollary~\ref{cor:rhofbp}.
\begin{corr}
Suppose $\phi \in L^1(\R)$ is a probability density function with $\|\phi\|_\infty<\infty$ and $\int_r^\infty \phi(x)\,\diffd x=1$ for some $r\in \R$.
Construct an $N$-BBM with initial particle locations given by i.i.d.~samples from $\phi$, as defined above.
Let $\pi_t^{(N)}$ denote the empirical distribution induced by the particle system at time $t$.
Then 
for any $t\geq 0$ and $a\in \R$,
$$
\lim_{N\to \infty} \pi^N_t [a,\infty)
=\int_a^\infty \rho (r,t) \,\diffd r =u(a,t)
\qquad \text{ a.s. and in }L^1,
$$
where $(u,\mu)$ is the solution of~\eqref{FBP} with initial condition $v$ given by $v(x)=\int_x^\infty \phi(y)\,\diffd y,$ and $\rho=-\partial _x u$. 
\end{corr}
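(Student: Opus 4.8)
The plan is to combine Corollary~\ref{cor:rhofbp} with the results of De Masi et al.~\cite{DeMasi2017a}; no genuinely new analytic work is needed, the task being to check that the hypotheses line up and then to convert the density statement into the distribution-function statement.

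First I would set $\omega := \phi\,\diffd\lambda$ and verify that $\omega$ satisfies the assumptions of Corollary~\ref{cor:rhofbp}. Since $\phi$ is a probability density, $\omega$ is a probability measure on $\R$; since $\int_r^\infty\phi = 1 = \int_\R\phi$, the measure $\omega$ is carried by $[r,\infty)$, so $v(x) := \omega[x,\infty) = \int_x^\infty\phi(y)\,\diffd y$ equals $1$ for $x\le r$, tends to $0$ as $x\to\infty$, and $\mu_0 = \inf\{x : v(x) < 1\}\in[r,\infty)\subset\R$. Hence Corollary~\ref{cor:rhofbp} applies: with $(u,\mu)$ the solution of~\eqref{FBP} with initial condition $v$ and $\rho := -\partial_x u$, the pair $(\rho,\mu)$ is a classical solution (indeed the unique one with $\rho\ge0$) of~\eqref{FBP'} with initial datum $\omega$.

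Next I would invoke the results of De Masi et al.~\cite{DeMasi2017a} (their Theorems~1 and~2): these provide, for each $t\ge0$, a probability density $\psi(\cdot,t):\R\to[0,\infty)$ with $\pi_t^{(N)}[a,\infty)\to\int_a^\infty\psi(r,t)\,\diffd r$ a.s.\ and in $L^1$ for every $a\in\R$, and they show $\psi = \rho$ whenever $(\rho,\mu)$ is a classical solution of~\eqref{FBP'} with initial condition $\omega = \phi\,\diffd\lambda$. By the previous step our $(\rho,\mu) = (-\partial_x u,\mu)$ is such a solution, so $\psi(\cdot,t) = -\partial_x u(\cdot,t)$ for every $t\ge0$. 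It then remains only to rewrite $\int_a^\infty\psi(r,t)\,\diffd r$ as $u(a,t)$. For $t>0$ and $a\ge\mu_t$,
\[
\int_a^\infty\psi(r,t)\,\diffd r = \int_a^\infty\bigl(-\partial_x u(r,t)\bigr)\,\diffd r = u(a,t) - \lim_{r\to\infty}u(r,t),
\]
and the constraint $\int_{\mu_t}^\infty\rho(y,t)\,\diffd y = 1$ from~\eqref{FBP'} together with $u(\mu_t,t)=1$ forces $\lim_{r\to\infty}u(r,t)=0$, so this equals $u(a,t)$; for $a<\mu_t$ it equals $\int_a^{\mu_t}\rho + \int_{\mu_t}^\infty\rho = 0+1 = 1 = u(a,t)$. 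The case $t=0$ is already contained in Theorem~1 of~\cite{DeMasi2017a} (and also follows directly from the law of large numbers for the i.i.d.\ initial positions, which gives $\pi_0^{(N)}[a,\infty)\to\omega[a,\infty)=v(a)=u(a,0)$).

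The only point that requires genuine care — and which I would settle before the computation above — is to confirm that the notion of ``classical solution of~\eqref{FBP'}'' used in~\cite{DeMasi2017a} matches the one defined in this section, most delicately in the sense in which the initial condition $\omega$ is attained (here, vague convergence of $\rho(\cdot,t)\,\diffd\lambda$ to $\diffd\omega$). If their notion is a priori phrased differently, one either checks directly that $\rho = -\partial_x u$ meets it or invokes the uniqueness clause of Corollary~\ref{cor:rhofbp}. This is bookkeeping rather than a substantive obstacle, and once it is disposed of the corollary is immediate.
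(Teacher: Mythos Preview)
Your proposal is correct and follows exactly the approach the paper takes: the paper simply states that the result is ``a direct consequence of Theorems~1 and~2 in~\cite{DeMasi2017a} and our Corollary~\ref{cor:rhofbp}'' without further argument, and you have spelled out precisely those verifications. The extra care you take in checking $\mu_0\in\R$, rewriting $\int_a^\infty\rho$ as $u(a,t)$, and flagging the need to reconcile the two notions of classical solution is all appropriate and more detailed than the paper itself.
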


\medskip

Lee \cite{Lee2017} points out that \eqref{FBP'} can be reformulated
as a variant of the Stefan problem;
let $(\rho, \mu)$ be a solution of \eqref{FBP'} and define $w(x,t):=
e^{-t}\partial_x \rho(x,t)$. Then under some regularity assumptions,
 $(w,\mu)$ solves
\begin{equation}\label{FBPstefan}  \tag{Stefan}
\begin{cases}
\partial_t w =\partial^2_{x} w  & \text{for $t>0$ and $x>\mu_t$,}\\
w(\mu_t,t)=e^{-t},\, \partial_t \mu_t = -\frac12 e^t \partial_x w(\mu_t,t)  & \text{for } t > 0.
\end{cases}
\end{equation}
The Stefan problem describes the phase change of a material and is one of the most
popular problems in the moving boundary problem literature. Typically, it requires
solving heat equations for the temperature in the two phases (e.g. solid and
liquid), while the position of the front separating them, the moving boundary, is
determined from an energy balance referred to as the Stefan condition. The Stefan
problem has been studied in great detail since Lamé and Clapeyron formulated it in
the 19th century \cite{Lame}. There are several reference books that the reader may consult such as the recent and up-to-date book \cite{Gupta2003}.

The free boundary problem \eqref{FBP} was studied at a heuristic level in \cite{Berestycki2017}. 
The main tool there was the following
relation, which can be proved rigorously:
\begin{lem}
Let $v$ be as in Theorem \ref{main thm} with $\mu_0 \in \R$ and such that
$\gamma:=\sup\big\{ r: \int_{\mu_0}^\infty v(x)e^{rx}\,\diffd
x <\infty\big\}\ge0.$ Let $(u,\mu)$ be the classical solution to \eqref{FBP}.
Then, for $r<\min(\gamma,1)$, and for any $t\ge0$,
\begin{equation}
1+ r \int_{0}^\infty\diffd x\, u(\mu_t+x,t)e^{rx}
= \int_0^\infty \diffd s\ 
e^{r(\mu_{t+s}-\mu_t)-(1+r^2)s}<\infty
.
\label{magic}
\end{equation}
\end{lem}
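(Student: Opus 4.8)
The plan is to show that, as a function of $t$, the left-hand side of \eqref{magic} solves a linear first-order ODE that can be integrated explicitly. Throughout write $\bar u$ for the solution of the \emph{linear} equation $\partial_t\bar u=\partial_x^2\bar u+\bar u$ on $\R\times(0,\infty)$ with $\bar u(\cdot,0)=v$; since each nonlinearity $u_n\mapsto u_n-u_n^n$ in \eqref{mainn} is $\le u_n$, the comparison principle gives $u_n\le\bar u$, hence $u\le\bar u$, and of course $u\le1$. Moreover $\bar u(x,s)=e^s(G_s\conv v)(x)$ with $G_s$ the heat kernel of $\partial_s-\partial_x^2$, so $\int_\R\bar u(x,s)e^{rx}\,\diffd x=e^{(1+r^2)s}\int_\R v(x)e^{rx}\,\diffd x$, and, $v$ being non-increasing with $\int_{\mu_0}^\infty v(x)e^{\beta x}\,\diffd x<\infty$ for all $\beta<\gamma$, one has the crude pointwise bound $v(x)\le C_\beta e^{-\beta x}$ and hence $\bar u(x,s)\le C_\beta e^{(1+\beta^2)s-\beta x}$. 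Put
\[
N_r(t):=e^{r\mu_t}+r\int_{\mu_t}^\infty u(x,t)e^{rx}\,\diffd x ;
\]
the integral converges by the above, and dividing by $e^{r\mu_t}$ and substituting shows the left-hand side of \eqref{magic} equals $e^{-r\mu_t}N_r(t)$. So it suffices to prove $e^{-r\mu_t}N_r(t)=\int_0^\infty e^{r(\mu_{t+s}-\mu_t)-(1+r^2)s}\,\diffd s$ for $t>0$; the case $t=0$ will follow on letting $t\searrow0$, using $\mu_t\to\mu_0$, $u(\cdot,t)\to v$ at continuity points of $v$, and dominated convergence with the integrable dominating function $C_\beta e^{-(\beta-r)x}$ for the $x$-integral ($r<\beta<\gamma$). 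We may assume $0<r<\min(\gamma,1)$, the cases $r\le0$ being similar and simpler.

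The core step is the identity
\begin{equation}\label{eq:planNr}
e^{-(1+r^2)t_1}N_r(t_1)-e^{-(1+r^2)t_2}N_r(t_2)=\int_{t_1}^{t_2}e^{r\mu_s-(1+r^2)s}\,\diffd s ,\qquad 0<t_1<t_2 .
\end{equation}
I would prove it by integrating $\partial_s u=\partial_x^2 u+u$ against the weight $\varphi(x,s):=e^{rx-(1+r^2)s}$ over the space--time domain $Q:=\{(x,s):t_1<s<t_2,\ x>\mu_s\}$ and applying Green's identity. The interior terms proportional to $\iint_Q u\,\varphi$ cancel, because $\partial_s\varphi=-(1+r^2)\varphi$, $\partial_x^2\varphi=r^2\varphi$, and $-(1+r^2)+r^2+1=0$. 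On the part of $\partial Q$ with $x\to+\infty$ all boundary contributions vanish, since $u\,\varphi\le C_\beta e^{(1+\beta^2)s-(\beta-r)x}\to0$ and $\partial_x u$ decays at the same exponential rate (from $u\le\bar u$ together with interior parabolic gradient estimates, or directly from the Feynman--Kac representation of Section~\ref{sec:FK}). On the lateral boundary $\{(\mu_s,s):t_1\le s\le t_2\}$, the conditions $u(\mu_s,s)=1$ and $\partial_x u(\mu_s,s)=0$ reduce the flux terms to $\int_{t_1}^{t_2}\varphi(\mu_s,s)\,\diffd\mu_s-r\int_{t_1}^{t_2}\varphi(\mu_s,s)\,\diffd s$, and a Stieltjes integration by parts in $s$ — legitimate since $s\mapsto\mu_s$ is continuous and non-decreasing — rewrites $\int_{t_1}^{t_2}\varphi(\mu_s,s)\,\diffd\mu_s$ as $\tfrac1r\big[e^{r\mu_s-(1+r^2)s}\big]_{t_1}^{t_2}+\tfrac{1+r^2}{r}\int_{t_1}^{t_2}\varphi(\mu_s,s)\,\diffd s$. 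Assembling these with the two slices $\int_{\mu_s}^\infty u(x,s)\varphi(x,s)\,\diffd x$ at $s=t_1,t_2$, and using $\tfrac{1+r^2}{r}-r=\tfrac1r$, yields \eqref{eq:planNr}. Equivalently, \eqref{eq:planNr} asserts that $t\mapsto e^{-(1+r^2)t}N_r(t)$ is absolutely continuous with derivative $-e^{-(1+r^2)t}e^{r\mu_t}$; the $\dot\mu$-terms produced by differentiating the moving lower limit cancel against $\tfrac{\diffd}{\diffd t}e^{r\mu_t}$, which is what makes \eqref{eq:planNr} insensitive to the regularity of $\mu$.

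Granting \eqref{eq:planNr}, $t\mapsto e^{-(1+r^2)t}N_r(t)$ is non-negative and non-increasing, so $\ell:=\lim_{t\to\infty}e^{-(1+r^2)t}N_r(t)$ exists in $[0,\infty)$, and letting $t_2\to\infty$ gives $e^{-(1+r^2)t}N_r(t)=\ell+\int_t^\infty e^{r\mu_s-(1+r^2)s}\,\diffd s$; multiplying by $e^{(1+r^2)t-r\mu_t}$ and substituting $s=t+w$ then gives $e^{-r\mu_t}N_r(t)=\ell\,e^{(1+r^2)t-r\mu_t}+\int_0^\infty e^{r(\mu_{t+w}-\mu_t)-(1+r^2)w}\,\diffd w$, so \eqref{magic} is equivalent to $\ell=0$. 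To prove $\ell=0$, fix $\beta$ with $r<\beta<\min(\gamma,1)$ (possible exactly because $r<\min(\gamma,1)$). From $u\le\bar u\le C_\beta e^{(1+\beta^2)s-\beta x}$ one gets, first, $u(x,s)<1$ — hence $x>\mu_s$ by \eqref{uandmu} — as soon as $x>(\beta+\beta^{-1})s+O(1)$, so $\mu_s\le(\beta+\beta^{-1})s+O(1)$; and, second, $\int_{\mu_s}^\infty u(x,s)e^{rx}\,\diffd x\le\tfrac{C_\beta}{\beta-r}e^{(1+\beta^2)s-(\beta-r)\mu_s}$. Together with the lower bound $\mu_s\ge(2-o(1))s$ — which follows from $u\ge u^{(0)}$ and $\mu_s\ge\mu_s^{(0)}$ (monotonicity in the initial datum, Theorem~\ref{main thm}), where $u^{(0)}$ solves \eqref{FBP} for the Heaviside datum $\mathds{1}_{(-\infty,\mu_0)}\le v$ and its free boundary advances at linear speed, indeed at the minimal travelling-wave speed $2$ (the speed-$2$ wave of \eqref{FBP} being $f(z)=(1+z)e^{-z}$) — this forces both $e^{r\mu_s-(1+r^2)s}\to0$ (since $\beta+\beta^{-1}<r+r^{-1}=\tfrac{1+r^2}{r}$ because $r<\beta<1$) and $e^{-(1+r^2)s}\int_{\mu_s}^\infty u(x,s)e^{rx}\,\diffd x\le\tfrac{C_\beta}{\beta-r}e^{(\beta-r)((\beta+r)s-\mu_s)}\to0$ (since $\beta+r<2\min(\gamma,1)\le2$). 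Hence $e^{-(1+r^2)s}N_r(s)\to0$, i.e.\ $\ell=0$, completing the proof for $t>0$ and, as explained, for $t=0$.

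The hard part is this last step, $\ell=0$. It relies on matching \emph{linear} bounds $\mu_s\asymp cs$ for the free boundary: the upper bound comes for free from $u\le\bar u$, but the lower bound $\mu_s\gtrsim 2s$ requires the known linear-speed propagation for \eqref{FBP} started from a Heaviside-type datum, and this is precisely where the hypothesis $r<\min(\gamma,1)$ is genuinely used (through $\beta+\beta^{-1}<r+r^{-1}$ and $\beta+r<2$). A secondary technical point is to justify the Green's-identity derivation of \eqref{eq:planNr} knowing only that $s\mapsto\mu_s$ is continuous and non-decreasing; this can be done via the Stieltjes formulation above, or by approximating $\mu$ by smooth curves and passing to the limit, using the regularity $u(\cdot,t)\in C^1(\R)$, $\partial_x u\in C(\R\times(0,\infty))$ of Theorem~\ref{thm u} and Proposition~\ref{prop mu}.
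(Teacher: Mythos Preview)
The paper omits the proof of this lemma, deferring to \cite{Berestycki2017,brunet2015exactly}, so there is no in-paper argument to compare against. Your strategy --- derive the ODE $\tfrac{\diffd}{\diffd t}\big[e^{-(1+r^2)t}N_r(t)\big]=-e^{r\mu_t-(1+r^2)t}$, integrate, and show the limit $\ell=\lim_{t\to\infty}e^{-(1+r^2)t}N_r(t)$ vanishes --- is the natural one and is essentially correct. Two issues deserve comment.

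\emph{Minor.} You assert that $s\mapsto\mu_s$ is non-decreasing to justify the Stieltjes manipulation. The paper proves only continuity of $\mu$, not monotonicity, so this is unjustified. It is also unnecessary: since $u\equiv1$ on $(-\infty,\mu_t]$ one has the clean rewriting $N_r(t)=r\int_\R u(x,t)e^{rx}\,\diffd x$, and differentiating this full-line integral in $t$ produces no moving-endpoint term at all; the ODE then follows from two integrations by parts in $x$ using $\partial_x u(\mu_t,t)=0$ and $u(\mu_t,t)=1$.

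\emph{Main point.} Your argument for $\ell=0$ invokes the front-speed lower bound $\mu_s\ge(2-o(1))s$, which you call ``the hard part'' and attribute to ``known linear-speed propagation'' for the Heaviside datum. This result is not established anywhere in the paper, and proving it is nontrivial; as written this is a genuine gap. But no lower bound on $\mu$ is actually needed. Using the full-line form of $N_r$ together with \emph{both} bounds $u\le1$ and $u\le\bar u\le C_\beta e^{(1+\beta^2)t-\beta x}$ for some fixed $\beta\in(r,\min(\gamma,1))$, split the integral at the crossover point where these two bounds meet to obtain
\[
e^{-(1+r^2)t}N_r(t)\ \le\ C\,\exp\!\big\{[\,r(\beta+\beta^{-1})-(1+r^2)\,]\,t\big\}.
\]
Since $x\mapsto x+x^{-1}$ is strictly decreasing on $(0,1)$ and $r<\beta<1$, the exponent is negative, so $\ell=0$ follows directly. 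The hypothesis $r<\min(\gamma,1)$ is used exactly here, to guarantee that such a $\beta$ exists; no appeal to front-speed asymptotics is required. Your bound on $\int_{\mu_s}^\infty u\,e^{rx}\,\diffd x$ used only $u\le\bar u$ and therefore retained a $\mu_s$-dependence that forced you toward the speed estimate; incorporating $u\le1$ on the left tail removes that dependence entirely.
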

Although the proof is not very difficult, we omit it  from the present work as it is not our main focus here; 
the main ideas can be found in \cite{Berestycki2017, brunet2015exactly}.

For instance, take a step initial condition $v(x)=\indic{x \le 0}$. Using
\eqref{magic} with $t=0$ and $r=1-\epsilon$ gives
\begin{equation}
\int_0^\infty\diffd s\, e^{ -\epsilon^2 s +(1-\epsilon)(\mu_s -2s)}=1\qquad\forall
\epsilon>0.
\label{v=0}
\end{equation}
Whether \eqref{v=0} completely characterizes  the function $\mu$ or not is an
open question.


\section{Feynman-Kac formulae} \label{sec:FK}

In this section, we state  the versions of the
Feynman-Kac formula which we shall use repeatedly in the rest of the paper.
So as not to interrupt the flow of the main argument, the proof for 
Proposition~\ref{lem:FKw} is postponed to Section \ref{sec:FK proofs}.

We introduce the heat kernel
\begin{equation} \label{ptdef}
p_t(x)=\frac1{\sqrt{4\pi t}}e^{-\frac{x^2}{4t}}.
\end{equation}
For $x\in \R$, we let $\P_x$ denote the probability measure under which $(B_t)_{t\geq 0}$ is a Brownian motion with diffusivity constant $\sqrt 2$ started at $x$. We let $\E_x$ denote the corresponding expectation.
The symbol $\conv$ denotes convolution; for instance,
$$p_t\conv v(x)=\int_{-\infty}^\infty \diffd y\,p_t(x-y)v(y)=\E_x[v(B_t)]$$
is the solution at time $t$ to the heat equation on $\R$ with an initial
condition $v$.

\begin{prop} \label{lem:FKw}
Suppose that $A\subseteq \R\times (0,\infty)$ is an open set,
and that $w:A\to \R$ is $C^{2,1}$ and bounded, and satisfies
\begin{equation} \label{equ w}
\partial_t w = \partial^2_x w +Kw +S
\quad \text{ for }(x,t)\in A,
\end{equation}
where $K:A\to \R$, 
$S:A \to \R$
are continuous and bounded.
Then, if one of the conditions below is met, we have the following
representation for $w(x,t)$ with $(x,t)\in A$:
\begin{equation} \label{eq:FKgen}
w(x,t)=\E_x \left[w(B_{\tau},t-\tau)e^{\int_0^{\tau} K(B_s,t-s)\,\diffd s} 
  + \int_0^\tau\diffd r\, S(B_r,t-r)e^{\int_0^{r   } K(B_s,t-s)\,\diffd s}\right],
\end{equation}
where $\tau$ is a stopping time for $(B_s)_{s\geq 0}$.

For the representation \eqref{eq:FKgen} to hold, it is sufficient to have
one the following:
\begin{enumerate}
\item The stopping time $\tau$ is such that $(B_s,t-s) \in A$ for all $s
\le\tau$,
\item 
The set $A$ is given by $A=\big\{(x,t):t\in(0,T)\text{ and
}x>\mu_t\big\}$ for some $T>0$ and some continuous boundary $t\mapsto\mu_t$
with $\mu_t\in\R\cup\{-\infty\}$ $\forall t\in [0,T]$,
the stopping time $\tau$ is given by $\tau=\inf\big\{s \ge0: B_s\le\mu_{t-s}\big\}\wedge t$
(the first time at which $(B_\tau,t-\tau)\in\partial A$) and,
furthermore,
$w$ is defined and bounded on $\bar A$, continuous on $\bar A \cap (\R\times (0,\infty))$ and satisfies $w(\cdot,t)\to w(\cdot,0)$ in
$L^1_\text{loc}$ as $t\searrow0$.
\end{enumerate}
\end{prop}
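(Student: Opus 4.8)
The plan is to read off \eqref{eq:FKgen} from It\^o's formula applied to the natural Feynman--Kac local martingale, treating case~1 directly and reducing case~2 to case~1 by approximating the domain from the inside.

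Fix $(x,t)\in A$, work under $\P_x$ (so that $(B_s)$ has generator $\partial_x^2$), and for $s\ge0$ put $E_s:=\exp\!\big(\int_0^s K(B_r,t-r)\diffd r\big)$ and
\[
M_s:=w(B_s,t-s)E_s+\int_0^s S(B_r,t-r)E_r\diffd r,
\]
which is well defined on any interval on which $r\mapsto(B_r,t-r)$ stays in $A$. On such an interval, It\^o's formula together with the identity $\partial_x^2 w-\partial_t w+Kw=-S$ coming from \eqref{equ w} gives $\diffd M_s=E_s\,\partial_x w(B_s,t-s)\,\diffd B_s$, so $M$ is a continuous local martingale (localize, if desired, by exit times from an exhaustion of $A$ by compacts). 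Under hypothesis~1, the requirement $(B_s,t-s)\in A\subseteq\R\times(0,\infty)$ for all $s\le\tau$ forces $\tau\le t$, and the global bounds $|w|\le C_w$, $|K|\le C_K$, $|S|\le C_S$ make $(M_{s\wedge\tau})_{s\ge0}$ uniformly bounded, hence a true martingale; optional stopping gives $w(x,t)=M_0=\E_x[M_\tau]$, which is exactly \eqref{eq:FKgen}.

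Under hypothesis~2 the path reaches $\partial A$ at time $\tau$, where $w$ is only assumed defined on $\bar A$ (not $C^{2,1}$), so I approximate from inside. For $\delta\in(0,t)$ and $k$ large, set $\tau_k^\delta:=\inf\{s\ge0:B_s\le\mu_{t-s}+1/k\}\wedge(t-\delta)$. One checks that $(B_s,t-s)\in A$ for all $s\le\tau_k^\delta$ (the path stays a distance $\ge1/k$ from the spatial boundary and $t-s\in[\delta,t)\subset(0,T)$), so case~1 applies with $\tau_k^\delta$ in place of $\tau$. Continuity of $s\mapsto\mu_{t-s}$ yields $\tau_k^\delta\nearrow\sigma_\delta:=\tau\wedge(t-\delta)$ as $k\to\infty$, and since $t-\sigma_\delta\ge\delta>0$ the limit point $(B_{\sigma_\delta},t-\sigma_\delta)$ lies in $\bar A\cap(\R\times(0,\infty))$, where $w$ is continuous; letting $k\to\infty$ (bounded convergence, all integrands being bounded) gives
\[
w(x,t)=\E_x\!\Big[w(B_{\sigma_\delta},t-\sigma_\delta)E_{\sigma_\delta}+\int_0^{\sigma_\delta}S(B_r,t-r)E_r\diffd r\Big]\qquad\text{for every }\delta\in(0,t).
\]

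It remains to let $\delta\searrow0$. Splitting according to whether $\tau<t-\delta$ or $\tau\ge t-\delta$: the first part converges, by dominated convergence, to $\E_x[w(B_\tau,t-\tau)E_\tau\mathbf 1_{\tau<t}]$ (here $t-\tau>0$, so $w$ is evaluated where it is continuous), and the source integral converges to $\E_x[\int_0^\tau S(B_r,t-r)E_r\diffd r]$. The remaining term $\E_x[w(B_{t-\delta},\delta)E_{t-\delta}\mathbf 1_{\tau\ge t-\delta}]$ must be shown to tend to $\E_x[w(B_t,0)E_t\mathbf 1_{\tau=t}]$, where $w(\cdot,0)$ denotes the $L^1_{\mathrm{loc}}$-limit from the hypotheses --- this is the crux. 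Since $\P_x(t-\delta\le\tau<t)\to0$, one reduces to $\E_x[w(B_{t-\delta},\delta)E_t\mathbf 1_{\tau=t}]$; then, using that $B_{t-\delta}$ has density at most $p_{t-\delta}(0)\le(2\pi t)^{-1/2}$ and that $\P_x(\sup_{s\le t}|B_s|>R)\to0$ as $R\to\infty$, the $L^1_{\mathrm{loc}}$-convergence $w(\cdot,\delta)\to w(\cdot,0)$ lets one replace $w(B_{t-\delta},\delta)$ by $w(B_{t-\delta},0)$; finally $\E_x[w(B_{t-\delta},0)E_t\mathbf 1_{\tau=t}]\to\E_x[w(B_t,0)E_t\mathbf 1_{\tau=t}]$ follows by conditioning on $\mathcal F_{t-\delta}$ (Markov property) and noting that the conditional survival-plus-exponential factor over $[t-\delta,t]$ tends to $1$ on $\{\tau=t\}$. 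I expect this last step --- transporting a merely $L^1_{\mathrm{loc}}$ (not continuous) initial datum through the Brownian expectation at the corner $t=0$ of $\bar A$, together with the bookkeeping needed when $\mu_t$ is allowed to equal $-\infty$ --- to be the main obstacle.
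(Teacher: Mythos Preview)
Your treatment of case~1 and the two-parameter inner approximation $\tau_k^\delta$ for case~2 match the paper exactly (the paper writes $\tau_{\epsilon,\delta}$); so do the limit $k\to\infty$ and the splitting at $\delta\searrow0$ into $\{\tau<t\}$ and $\{\tau\ge t-\delta\}$. The divergence, and the gap, is entirely in your handling of the corner term $\E_x[w(B_{t-\delta},\delta)E_{t-\delta}\indic{\tau\ge t-\delta}]$.

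Your reduction to $\E_x[w(B_{t-\delta},\delta)E_t\indic{\tau=t}]$ and the replacement of $w(B_{t-\delta},\delta)$ by $w(B_{t-\delta},0)$ via the $L^1_{\mathrm{loc}}$ hypothesis are fine. The gap is the final step: ``conditioning on $\mathcal F_{t-\delta}$ and noting that the conditional survival-plus-exponential factor over $[t-\delta,t]$ tends to $1$ on $\{\tau=t\}$'' does not establish
\[
\E_x\big[w(B_{t-\delta},0)E_t\indic{\tau=t}\big]\longrightarrow\E_x\big[w(B_t,0)E_t\indic{\tau=t}\big].
\]
After conditioning on $\mathcal F_{t-\delta}$ you still have to compare $w(B_{t-\delta},0)$ with (the conditional expectation of) $w(B_t,0)$; since $w(\cdot,0)$ is merely bounded measurable, this is not a statement about the survival factor at all, and no amount of Markov-property bookkeeping makes $w(B_{t-\delta},0)$ close to $w(B_t,0)$ pathwise. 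You correctly flagged this as the main obstacle, but the sentence you wrote does not resolve it.

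The paper handles the corner term by a different decomposition. It introduces the sub-density $\phi_r(y;x,t)=p_r(x-y)\,\E_x\big[e^{I_r}\indic{\tau\ge r}\,\big|\,B_r=y\big]$, writes both $\E_x[w(B_{t-\epsilon},\epsilon)e^{I_{t-\epsilon}}\indic{\tau\ge t-\epsilon}]$ and $\E_x[w(B_t,\epsilon)e^{I_t}\indic{\tau=t}]$ as $\int w(y,\epsilon)\phi_{\cdot}(y;x,t)\,\diffd y$, and proves $\phi_{t-\epsilon}(y;x,t)\to\phi_t(y;x,t)$ pointwise in $y$. That pointwise convergence requires a separate Brownian-bridge lemma: a bridge from $x>\mu_t$ to $y>\mu_0$ almost surely does not merely \emph{touch} the continuous boundary $s\mapsto\mu_{t-s}$ without crossing it. Once $\phi_{t-\epsilon}\to\phi_t$ is in hand, the $L^1_{\mathrm{loc}}$ hypothesis is applied against the fixed density $\phi_t$, which is dominated by a Gaussian.

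Your decomposition can in fact be repaired, and then bypasses the bridge lemma: bounding $E_t\indic{\tau=t}\le e^{t\|K\|_\infty}$ crudely, one has
\[
\E_x\big[|w(B_{t-\delta},0)-w(B_t,0)|\big]=\int p_{t-\delta}(x-y)\Big(\int p_\delta(y-z)\,|w(y,0)-w(z,0)|\,\diffd z\Big)\diffd y,
\]
and the inner integral tends to $0$ at every Lebesgue point $y$ of $w(\cdot,0)$ while staying bounded by $2\|w\|_\infty$; dominated convergence (using $p_{t-\delta}(x-\cdot)\le\sqrt2\,p_t(x-\cdot)$ for $\delta<t/2$) finishes it. This gives a route somewhat shorter than the paper's, but it is not the argument you wrote down.
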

Although this is a very classical result, we give a proof in Section~\ref{sec:FK proofs} for the sake
of completeness and because we could not find a exact statement with
stopping times or a discontinuous initial condition in the literature. 
The proof that~\eqref{eq:FKgen} holds under condition~1  essentially follows the proof of Theorem~4.3.2 in~\cite{Durrett1996}.



Proposition~\ref{lem:FKw} gives some useful representations for the $u_n$ defined
in \eqref{mainn}.
\begin{corr} Let $v:\R\to[0,1]$ be measurable, let $n\ge2$ and let $u_n(x,t)$ denote the solution
to~\eqref{mainn}. Then by Proposition~\ref{lem:FKw}:
\begin{itemize}
\item taking $K=1-u_n^{n-1}$ and $S=0$, for $\tau$ a stopping time with
$\tau< t$:
\begin{equation} \label{eq:FKtau}
u_n(x,t)=\E_x \left[u_n(B_{\tau},t-\tau)e^{\int_0^{\tau}
(1-u_n^{n-1}(B_s,t-s))\diffd s} \right].
\end{equation}

\item taking $K=1-u_n^{n-1}$, $S=0$, and $\tau=t$:
\begin{equation} \label{eq:FKt}
u_n(x,t)=\E_x \left[v(B_t)e^{\int_0^{t}
(1-u_n^{n-1}(B_s,t-s))\diffd s} \right].
\end{equation}

\item taking $K=0$, $S=u_n-u_n^n$ and $\tau=t$:
\begin{equation}
\label{unrep}
\begin{aligned}u_n(x,t)&=
\E_x \left[v(B_{t}) 
  + \int_0^t\diffd r\, \big[u_n(B_r,t-r)-u_n^n(B_r,t-r)\big]\right]
\\
&=p_t\conv v(x) + \int_0^t\diffd r\, p_r\conv \big[u_n(x,t-r)-u_n^n
(x,t-r)\big]
\end{aligned} \end{equation}
\item taking $K=1$, $S=-u_n^n$ and $\tau=t$:
\begin{equation}
\label{unrep2}
\begin{aligned}u_n(x,t)&=
\E_x \left[v(B_{t})e^t 
  - \int_0^t\diffd r\, e^ru_n^n(B_r,t-r)\right]
\\
&=e^t p_t\conv v(x) - \int_0^t\diffd r\, e^rp_r\conv u_n^n
(x,t-r).
\end{aligned} \end{equation}
\end{itemize}
\end{corr}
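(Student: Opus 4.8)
The plan is to apply Proposition~\ref{lem:FKw} four times, always with $w=u_n$, but each time rewriting the reaction term $u_n-u_n^n$ of~\eqref{mainn} in the form $Kw+S$ in a different way. First I would recall the classical facts about the Fisher--KPP equation~\eqref{mainn} that we are entitled to use (see e.g.~\cite{kpp,Aronson1975,McKean1975,Uchiyama1978}): for each $n\ge 2$ the solution $u_n$ is unique, is $C^{2,1}$ (indeed smooth) on $\R\times(0,\infty)$, takes values in $[0,1]$, and satisfies $u_n(\cdot,t)\to v(\cdot)$ in $L^1_{\mathrm{loc}}$ as $t\searrow0$. In particular $u_n$ is bounded, and each of the functions $1-u_n^{n-1}$, $u_n-u_n^n$ and $-u_n^n$ is continuous and bounded on $\R\times(0,\infty)$ and on $\R\times(0,T)$ for any $T>0$. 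Since $u_n-u_n^n=(1-u_n^{n-1})u_n+0=0\cdot u_n+(u_n-u_n^n)=1\cdot u_n+(-u_n^n)$, the hypotheses of Proposition~\ref{lem:FKw} on $w,K,S$ hold in all four cases; it only remains to choose the set $A$ and the stopping time and to verify one of the two sufficient conditions.

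For~\eqref{eq:FKtau} I would take $A=\R\times(0,\infty)$, $K=1-u_n^{n-1}$, $S=0$, and the given stopping time $\tau$ with $\tau<t$: then $t-s\ge t-\tau>0$ for every $s\le\tau$, so $(B_s,t-s)\in A$ for all $s\le\tau$ and condition~1 of Proposition~\ref{lem:FKw} applies. For the three representations with $\tau=t$ --- namely~\eqref{eq:FKt}, \eqref{unrep} and~\eqref{unrep2} --- the path $(B_s,t-s)$ reaches $\{t=0\}\subseteq\partial A$, so I would instead invoke condition~2 of Proposition~\ref{lem:FKw} with the trivial lower boundary $\mu_s\equiv-\infty$: then $A=\R\times(0,T)$ for any $T>t$, the associated stopping time is $\tau=t$, and $u_n$ (extended by $v$ at $t=0$) is bounded on $\bar A$, continuous on $\bar A\cap(\R\times(0,\infty))$, and satisfies $u_n(\cdot,t)\to v$ in $L^1_{\mathrm{loc}}$ as $t\searrow0$ --- exactly what condition~2 requires. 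Applying it with $(K,S)=(1-u_n^{n-1},0)$ yields~\eqref{eq:FKt}, with $(K,S)=(0,u_n-u_n^n)$ yields the first line of~\eqref{unrep}, and with $(K,S)=(1,-u_n^n)$ yields the first line of~\eqref{unrep2}. The second lines of~\eqref{unrep} and~\eqref{unrep2} then follow by writing $\E_x[f(B_t)]=p_t\conv f(x)$ and, for the time integrals, using Fubini's theorem (legitimate since all integrands are bounded and $r$ ranges over the bounded interval $[0,t]$) together with $\E_x[g(B_r,t-r)]=p_r\conv g(\cdot,t-r)(x)$, the weights $1$ and $e^r$ being constants in $y$ and thus pulled outside the convolution in the spatial variable.

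I do not expect any serious obstacle; this is a routine corollary. The only points requiring a little care are that the terminal-time formulae touch the line $\{t=0\}$, where $u_n$ is controlled only in $L^1_{\mathrm{loc}}$, which is precisely why condition~2 rather than condition~1 of Proposition~\ref{lem:FKw} must be used there, and that the stochastic and exponential terms are genuinely integrable, which is immediate from $0\le u_n\le 1$ (so $K\le1$ and every exponential weight is bounded by $e^{t}$ on $[0,t]$).
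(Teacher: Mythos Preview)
Your proposal is correct and is exactly the intended approach: the paper's own proof is the single sentence ``This is a direct consequence of the previous result,'' and what you have written is a careful unpacking of that sentence, invoking condition~1 of Proposition~\ref{lem:FKw} for the strict stopping time in~\eqref{eq:FKtau} and condition~2 with the trivial boundary $\mu\equiv-\infty$ for the three terminal-time formulae. Your observation that condition~2 (rather than~1) is needed when $\tau=t$ because the initial data is only controlled in $L^1_{\mathrm{loc}}$ is precisely the reason condition~2 was formulated the way it was.
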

\begin{proof}
This is a direct consequence of the previous result.
\end{proof}

We will also use the following representation for solutions of the free boundary problem~\eqref{FBP}:
\begin{corr}
If $v$ is as in Theorem~\ref{main thm} and $(u,\mu)$ is a classical solution
of~\eqref{FBP} with initial condition $v$, then for $t> 0$ and $x\in \R$,
\begin{equation} \label{eq:uniq(star)}
u(x,t)=\E_x \left[e^\tau \indic{\tau<t}+e^t v(B_t)\indic{\tau = t}\right],
\end{equation}
where
$\tau=\inf\{s\geq 0:B_s\leq \mu_{t-s}\} \wedge t$.
\label{lem:FKfbp}
\end{corr}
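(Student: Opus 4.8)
The plan is to read this off directly from condition~2 of Proposition~\ref{lem:FKw}, applied with $w=u$, $K\equiv1$ and $S\equiv0$, after first disposing of the trivial region to the left of the free boundary. If $x\le\mu_t$, then $u(x,t)=1$ by the second line of~\eqref{FBP}, while $\tau=\inf\{s\ge0:B_s\le\mu_{t-s}\}\wedge t=0$ because $B_0=x\le\mu_t$; since $t>0$ this gives $\E_x\big[e^\tau\indic{\tau<t}+e^tv(B_t)\indic{\tau=t}\big]=e^0=1$, which matches, so this case is immediate.

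For $x>\mu_t$, fix $T>t$ and set $A=\{(y,s):s\in(0,T),\ y>\mu_s\}$, extending $u$ to the slice $s=0$ by $u(\cdot,0):=v(\cdot)$. I would then check that the hypotheses of Proposition~\ref{lem:FKw}(2) hold: $u$ is $C^{2,1}$ and $[0,1]$-valued on $A$ and solves $\partial_t u=\partial_x^2u+u$ there (first line of~\eqref{FBP}); $K\equiv1$ and $S\equiv0$ are continuous and bounded; $t\mapsto\mu_t$ is continuous on $[0,T]$ with values in $\R\cup\{-\infty\}$ (using that $\mu_t\to\mu_0$ as $t\searrow0$ for the classical solution, cf.~Proposition~\ref{prop mu}); $u$ is bounded on $\bar A$ and continuous on $\bar A\cap(\R\times(0,\infty))$, since $u\in C(\R\times(0,\infty))$ and $u\equiv1$ on $\{y\le\mu_s\}$; and $u(\cdot,s)\to u(\cdot,0)=v$ in $L^1_\mathrm{loc}$ as $s\searrow0$. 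Taking $\tau$ to be the exit time from $A$ — which is exactly the $\tau$ in the statement — Proposition~\ref{lem:FKw} then gives
\[
u(x,t)=\E_x\Big[u(B_\tau,t-\tau)\,e^{\int_0^\tau 1\,\diffd s}\Big]=\E_x\big[u(B_\tau,t-\tau)\,e^{\tau}\big].
\]

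It remains to identify $u(B_\tau,t-\tau)$ on the two events. On $\{\tau=t\}$ it equals $u(B_t,0)=v(B_t)$ by the extension. On $\{\tau<t\}$, since $B_0=x>\mu_t$ and both $s\mapsto B_s$ and $s\mapsto\mu_{t-s}$ are continuous, the Brownian path must actually touch the moving boundary at time $\tau$, so $B_\tau=\mu_{t-\tau}$ with $t-\tau>0$, whence $u(B_\tau,t-\tau)=u(\mu_{t-\tau},t-\tau)=1$ by the second line of~\eqref{FBP}. Substituting these two identifications into the displayed formula yields $u(x,t)=\E_x\big[e^{\tau}\indic{\tau<t}+e^tv(B_t)\indic{\tau=t}\big]$, as required.

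The argument is essentially bookkeeping; the only two points that need genuine care are the verification that $t\mapsto\mu_t$ extends continuously to $t=0$ with $\mu_0=\inf\{x:v(x)<1\}$, so that Proposition~\ref{lem:FKw}(2) genuinely applies, and the sample-path continuity observation that $B_\tau=\mu_{t-\tau}$ on $\{\tau<t\}$. The latter is precisely why the region $x\le\mu_t$ has to be peeled off first: there one has $\tau=0$ and the path need not reach the boundary from the right, so the identification of $u$ at the stopping time is of a different (and trivial) nature.
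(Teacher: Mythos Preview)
Your proof is correct and is exactly the paper's approach: the paper's entire proof is the single sentence ``This is a direct application of Proposition~\ref{lem:FKw} under condition~2, with $K=1$ and $S=0$,'' and you have simply unpacked what that application entails, including the trivial case $x\le\mu_t$ and the identification of $u(B_\tau,t-\tau)$ on the two events.

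One minor caveat: your appeal to Proposition~\ref{prop mu} for the continuity of $\mu$ at $t=0$ is misplaced, since that proposition concerns the particular solution constructed as $\lim_n u_n$, whereas Corollary~\ref{lem:FKfbp} is stated for an \emph{arbitrary} classical solution (and is used that way in the uniqueness proof of Section~\ref{sec:uniq}). The paper's one-line proof glosses over this point too; strictly speaking, for a general classical solution one only has continuity of $\mu$ on $(0,\infty)$, and the behaviour at $t=0$ has to come from the $L^1_{\mathrm{loc}}$ convergence $u(\cdot,t)\to v$ together with the boundedness of $u$, which is what condition~2 of Proposition~\ref{lem:FKw} actually uses at the bottom boundary.
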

\begin{proof}
This is a direct application of Proposition~\ref{lem:FKw} under condition~2,
with $K=1$ and $S=0$.
\end{proof}

Finally, we use the following result to recognise solutions to partial differential
equations:
\begin{lem} \label{lem:FKreverse}
Suppose that $a<b$, $t_0<t_1$, and that $g:[a,b]\times [t_0,t_1]\to [0,\infty)$ is continuous and for $x\in [a,b]$ and $t\in [t_0,t_1]$,
$$
g(x,t)=\E_x \left[g(B_\tau, t-\tau) e^\tau \right],
$$ 
where $\tau = \inf\{s\geq 0:B_s\in \{a,b\}\} \wedge (t-t_0)$.
Then $g \in C^{2,1}((a,b)\times (t_0,t_1))$ with
$$
\partial_t g=\partial^2_x g+g \qquad \text{ for } (x,t)\in (a,b)\times (t_0,t_1).
$$
\end{lem}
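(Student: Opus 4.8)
\textbf{Proof proposal for Lemma~\ref{lem:FKreverse}.}

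The plan is to show that $g$ solves the heat-with-reaction equation by a local argument: fix an arbitrary point $(x_*,t_*)\in(a,b)\times(t_0,t_1)$, pick a small closed rectangle $R=[a',b']\times[t_0',t_1']$ around it with $a<a'<b'<b$ and $t_0<t_0'<t_1'<t_1$, and compare $g$ on $R$ with the \emph{known} classical solution of the parabolic problem having $g$ as boundary data on the parabolic boundary of $R$. Concretely, let $h$ be the solution of $\partial_t h=\partial_x^2 h+h$ on $(a',b')\times(t_0',t_1']$ with $h=g$ on the parabolic boundary $\big(\{a',b'\}\times[t_0',t_1']\big)\cup\big([a',b']\times\{t_0'\}\big)$; since $g$ is continuous, such an $h$ exists, is unique, lies in $C^{2,1}$ of the interior and is continuous up to the closure (standard parabolic theory — solvability of the first initial-boundary-value problem for a uniformly parabolic operator with continuous data). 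The goal is then to prove $g\equiv h$ on $R$, which immediately gives $g\in C^{2,1}$ near $(x_*,t_*)$ and $\partial_t g=\partial_x^2 g+g$ there; since $(x_*,t_*)$ was arbitrary, the lemma follows.

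To identify $g$ with $h$, I would first apply Proposition~\ref{lem:FKw} (condition~1, or equivalently the classical Feynman-Kac argument) to $h$ on the open set $A=(a',b')\times(t_0',t_1')$ with $K\equiv 1$, $S\equiv 0$, and the stopping time $\sigma=\inf\{s\ge0:B_s\notin(a',b')\}\wedge(t-t_0')$: this yields, for $(x,t)\in R$,
\[
h(x,t)=\E_x\!\left[h(B_\sigma,t-\sigma)e^{\sigma}\right].
\]
Next I would derive the \emph{same} identity for $g$ by applying the strong Markov property to the hypothesised representation of $g$. The hypothesis gives $g(x,t)=\E_x[g(B_\tau,t-\tau)e^\tau]$ with $\tau=\inf\{s:B_s\in\{a,b\}\}\wedge(t-t_0)$; stopping first at $\sigma\le\tau$ (note $\sigma\le\tau$ pathwise because exiting $(a',b')$ happens no later than hitting $\{a,b\}$, and the time caps satisfy $t-t_0'\le t-t_0$ on the relevant range... here one must be a little careful about the two different time-horizon truncations, see below) and using the tower property together with the fact that after time $\sigma$ the remaining expectation is exactly $g(B_\sigma,t-\sigma)$, one obtains
\[
g(x,t)=\E_x\!\left[g(B_\sigma,t-\sigma)e^{\sigma}\right]
\]
for $(x,t)\in R$. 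Subtracting, $f:=g-h$ satisfies $f(x,t)=\E_x[f(B_\sigma,t-\sigma)e^\sigma]$ on $R$, and $f$ vanishes on the parabolic boundary of $R$ by construction of $h$. A standard argument now forces $f\equiv0$: let $M=\sup_R|f|$; evaluating at a point where $|f|$ is nearly attained, and using that $B_\sigma$ lands on the parabolic boundary (where $f=0$) unless $\sigma=t-t_0'$ — i.e. unless the Brownian motion has survived in $(a',b')$ for the whole time $t-t_0'$ — gives $|f(x,t)|\le e^{t_1'}\,M\,\P_x(\text{BM stays in }(a',b')\text{ for time }t-t_0')$. Since $t-t_0'$ is bounded below by a positive constant on $[t_0'+\delta,t_1']$, this probability is $\le 1-c<1$ uniformly there for small shifts, so $M$ on that sub-slab is $\le(1-c)M$, hence $0$; then one propagates down to $t=t_0'$ by the same argument on successively lower slabs (or simply chooses $t_1'-t_0'$ small enough from the start that $\sup_{R}e^{t}\P_x(\cdot)<1$, covering $(t_0,t_1)$ by finitely many such thin slabs and gluing). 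Thus $g=h$ on $R$.

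The main obstacle, and the point that needs the most care, is the mismatch between the two stopping times: $\tau$ in the hypothesis truncates at $t-t_0$ while the natural exit time $\sigma$ for the small rectangle truncates at $t-t_0'$, and these truncation horizons are different. The clean fix is to apply the strong Markov property at the \emph{untruncated} exit time $\sigma_0=\inf\{s:B_s\notin(a',b')\}$ and note that on $\{\sigma_0< t-t_0'\}$ we have $B_{\sigma_0}\in\{a',b'\}$ with $t-\sigma_0>t_0'$, so the inner expectation is $g(B_{\sigma_0},t-\sigma_0)$ by the hypothesis applied at that (interior, hence admissible) space-time point, while on $\{\sigma_0\ge t-t_0'\}$ the Brownian path is still inside $(a',b')$ at time $t-t_0'$ and one peels off $g(B_{t-t_0'},t_0')$ directly — giving precisely $g(x,t)=\E_x[g(B_\sigma,t-\sigma)e^\sigma]$ with $\sigma=\sigma_0\wedge(t-t_0')$. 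Establishing this rigorously requires checking that the hypothesis's functional equation can be "restarted" at a generic interior point, which follows from the strong Markov property of Brownian motion once one observes that $\tau$ restarted from an interior point $(y,s)$ with $y\in(a,b)$, $s\in(t_0,t_1)$, coincides in distribution with the continuation of the original $\tau$ past $\sigma_0$ on that event. Everything else (existence/regularity of $h$, the maximum-principle-type contraction) is routine parabolic PDE and elementary probability.
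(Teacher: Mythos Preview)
Your approach is correct and is precisely the standard argument the paper invokes by deferring to Exercise~4.3.15 in Karatzas--Shreve: solve the initial--boundary value problem classically on a rectangle with $g$ as parabolic-boundary data, represent both the classical solution $h$ and the given $g$ stochastically via the same stopped expectation, and conclude $g=h$.

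Two small remarks. First, the sub-rectangle $[a',b']\times[t_0',t_1']$ is not needed: you can take $R=[a,b]\times[t_0,t_1]$ itself, in which case the stopping time $\sigma$ for $h$ is exactly the $\tau$ of the hypothesis and no strong-Markov restart is required at all (the restart does work, and your handling of the two time horizons is fine, but it is extra labour). Second, your contraction step is both unnecessary and slightly mis-reasoned: you write that $(B_\sigma,t-\sigma)$ lands on the parabolic boundary ``unless $\sigma=t-t_0'$'', but when $\sigma=t-t_0'$ one has $(B_\sigma,t-\sigma)\in[a',b']\times\{t_0'\}$, which \emph{is} the bottom of the parabolic boundary. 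Hence $f(B_\sigma,t-\sigma)=0$ almost surely in every case, and $f\equiv 0$ follows in one line from $f(x,t)=\E_x[f(B_\sigma,t-\sigma)e^\sigma]$, with no slab-thinning or iteration needed.
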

\begin{proof}
The proof is the same as the proof of Exercise 4.3.15 in~\cite{karatzasshreve}, where an outline proof is given.
\end{proof}


\section{Proof of Theorem~\ref{thm u}}\label{sec:thm u}

In this section, we suppose $v:\R\to [0,1]$ is measurable. Let $u_n$ denote the solution of~\eqref{mainn} and define $u$ as in~\eqref{mainu}.
We shall use the following basic results on the smoothing effect of convolution with the 
heat kernel $p_t$ as introduced in~\eqref{ptdef}.
\begin{lem}
Suppose $t>0$.
\begin{enumerate}
\item
If $x\mapsto a(x)$ is bounded, then $x\mapsto p_t\conv a(x)$ is $C^\infty$ and $(p_t\conv a)^{(n)}(x)=p_t^{(n)}\conv a(x)$.
\item
If $(x,s)\mapsto b(x,s)$ is such that $b_s :=\|b(\cdot,s)\|_\infty<\infty$ for each $s\in (0,t)$, and the map $s\mapsto \frac{b_s}{\sqrt{s}}$ is integrable on $[0,t]$, then $f(x):=\int_0^t\diffd s\,\int_{-\infty}^\infty \diffd
y\, p_s(x-y) b(y,s)=\int_0^t\diffd s\, p_s\conv b(x,s)$ is $C^1$ and 
$f'(x)=\int_0^t\diffd s\, p'_s\conv b(x,s)$.
\end{enumerate}
\label{lemsmooth}
\end{lem}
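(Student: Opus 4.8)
The plan is to prove both statements by differentiating under the integral sign, the single analytic input being that the spatial derivatives of the heat kernel are integrable on $\R$ with a controlled blow-up as $t\searrow0$.

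First I would record the scaling identity. Writing $p_t(x)=t^{-1/2}\varphi(x/\sqrt t)$ with $\varphi(y)=(4\pi)^{-1/2}e^{-y^2/4}$, one gets $p_t^{(n)}(x)=t^{-(n+1)/2}\varphi^{(n)}(x/\sqrt t)$, and since $\varphi^{(n)}(y)=q_n(y)e^{-y^2/4}$ for a polynomial $q_n$ (a Hermite-type polynomial of degree $n$), the function $\varphi^{(n)}$ is integrable. Hence, by the change of variables $u=x/\sqrt t$,
\[
\|p_t^{(n)}\|_{L^1(\R)}=\int_{-\infty}^{\infty}\big|p_t^{(n)}(x)\big|\,\diffd x=c_n\,t^{-n/2},\qquad c_n:=\|\varphi^{(n)}\|_{L^1(\R)}<\infty .
\]
In particular $\|p_t\|_{L^1}=1$ and $\|p_t'\|_{L^1}=c_1\,t^{-1/2}$.

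For part 1, fix $t>0$ and $a$ bounded. For every $n$ and every $x$ one has $\big|\partial_x^n\big(p_t(x-y)a(y)\big)\big|=\big|p_t^{(n)}(x-y)\big|\,|a(y)|\le\|a\|_\infty\big|p_t^{(n)}(x-y)\big|$, which is integrable in $y$ with $L^1$-norm at most $\|a\|_\infty c_n t^{-n/2}$, a bound independent of $x$. Applying the standard theorem on differentiation under the integral sign inductively — at step $n$ one bounds the difference quotient of $x\mapsto p_t^{(n-1)}(x-y)$ via the mean value theorem by $\sup_{\R}\big|p_t^{(n)}\big|\cdot|a(y)|$ on a small $x$-interval, which is legitimate precisely because these dominating functions are integrable in $y$ uniformly in $x$ — one obtains that $x\mapsto p_t\conv a(x)$ is $n$ times continuously differentiable with $(p_t\conv a)^{(n)}=p_t^{(n)}\conv a$, for every $n$; that is, $p_t\conv a\in C^\infty(\R)$.

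For part 2, set $g_s(x):=p_s\conv b(\cdot,s)(x)=\int p_s(x-y)b(y,s)\,\diffd y$, so that $f(x)=\int_0^t g_s(x)\,\diffd s$. By part 1 applied to $a=b(\cdot,s)$, each $g_s$ is $C^\infty$ with $g_s'(x)=p_s'\conv b(\cdot,s)(x)$, and the $L^1$-estimates above give, for all $x\in\R$,
\[
|g_s(x)|\le b_s,\qquad |g_s'(x)|\le c_1\,b_s\,s^{-1/2}.
\]
Since $s\mapsto b_s/\sqrt s$ is integrable on $[0,t]$ and $b_s\le\sqrt t\,(b_s/\sqrt s)$ there, both $\int_0^t|g_s(x)|\,\diffd s$ and $\int_0^t|g_s'(x)|\,\diffd s$ are finite and bounded uniformly in $x$ (by $\sqrt t\,\|b_\cdot/\sqrt\cdot\|_{L^1[0,t]}$ and $c_1\|b_\cdot/\sqrt\cdot\|_{L^1[0,t]}$ respectively). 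Hence $f$ is well defined, and a further application of the differentiation-under-the-integral theorem — now in the variable of integration $s$, with $x$-independent dominating function $s\mapsto c_1\,b_s\,s^{-1/2}\in L^1[0,t]$ — shows that $f$ is $C^1$ with $f'(x)=\int_0^t g_s'(x)\,\diffd s=\int_0^t p_s'\conv b(x,s)\,\diffd s$, and continuity of $f'$ follows from dominated convergence together with continuity of each $g_s'$.

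The only point requiring genuine care — and the one I expect to be the main (minor) obstacle — is making the inductive step in part 1 and the $s$-integral step in part 2 fully rigorous, i.e.\ verifying that the relevant difference quotients are dominated, uniformly in the held-fixed variable, by the integrable functions produced by the heat-kernel bound above; everything else is bookkeeping. (Joint measurability of $b$, needed to make sense of $f$ via Fubini, is implicit in the hypotheses.)
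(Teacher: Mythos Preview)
Your overall strategy is exactly the paper's: use the polynomial-times-Gaussian structure of $p_t^{(n)}$ to justify differentiating under the integral, and then for part~2 bound $|g_s'(x)|$ by a constant times $b_s/\sqrt s$ and differentiate under the $s$-integral. Part~2 is fine and matches the paper's argument (the paper computes the constant explicitly as $1/\sqrt\pi$).

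There is, however, a small slip in your inductive step for part~1. You propose to dominate the difference quotient of the integrand by $\sup_{\R}|p_t^{(n)}|\cdot|a(y)|$; but $a$ is only bounded, so this dominating function is a constant in $y$ and is \emph{not} integrable over $\R$. The mean value theorem gives $p_t^{(n)}(\xi-y)$ at some intermediate $\xi$ with $|\xi-x|\le|h|$, and what you need is that $y\mapsto\sup_{|\xi-x|\le 1}|p_t^{(n)}(\xi-y)|$ is integrable. This follows from the pointwise bound the paper records, namely $|p_t^{(n)}(z)|\le|q_{n,t}(z)|e^{-z^2/(4t)}$ for a polynomial $q_{n,t}$: shifting $z$ by at most $1$ still leaves a polynomial-times-Gaussian envelope in $x-y$, which is integrable in $y$ uniformly for $x$ in compact sets. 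Once you replace your domination by this localized envelope (rather than the global sup of $|p_t^{(n)}|$), the inductive step goes through, and the rest of your argument is correct.
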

\begin{proof}
The first statement holds since for every $n\in\N$ and $t>0$,
there exists a polynomial function $q_{n,t}:\R\to \R$ such that
$
|p_t^{(n)}(x-y)|\leq |q_{n,t}(x-y)| e^{-(x-y)^2/(4t)}
$
$\forall x,y\in \R$.
Then for the second statement, we have that
$f_s(x):=p_s\ast b(x,s)$ is smooth, with
$$\big|f'_s(x)\big|
=\big|  p_s' \conv b(x,s)\big|
\le   \int_{-\infty}^\infty \diffd y\,  \big| p_s'(x-y) \big| b_s= \frac{b_s}{\sqrt{\pi s}}.$$
Since $s\mapsto \frac{b_s}{\sqrt{\pi s}}$ is integrable on $[0,t]$, the result follows.
\end{proof} 
The following result of Uchiyama provides a useful bound on the spatial derivative of $u_n$.
\begin{lem}[\cite{Uchiyama1978}, Section~4]
\label{lem:uchi}
For $x\in \R$ and $t>0$,
 \begin{equation}
|\partial_x u_n(x,t)|\le\frac1{\sqrt{ \pi t}} +\frac{\sqrt 8}{\sqrt\pi}.
\label{unprimebound}
\end{equation}
\end{lem}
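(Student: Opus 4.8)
The plan is to read off the bound from the Duhamel representation \eqref{unrep} combined with the fact that \eqref{mainn} is autonomous (so it enjoys a semigroup property). Everything rests on two elementary inputs: $\|p_r'\|_{L^1}=1/\sqrt{\pi r}$ (already used in the proof of Lemma~\ref{lemsmooth}), and $0\le u_n-u_n^n\le 1$ on the range $[0,1]$ of $u_n$.

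First I would differentiate \eqref{unrep} in $x$. Since $v$ is bounded, and since $r\mapsto \|u_n(\cdot,t-r)-u_n^n(\cdot,t-r)\|_\infty/\sqrt r\le 1/\sqrt r$ is integrable on $[0,t]$, Lemma~\ref{lemsmooth} justifies differentiating under the integral sign, giving
\[
\partial_x u_n(x,t)=p_t'\conv v(x)+\int_0^t\diffd r\;p_r'\conv\big[u_n-u_n^n\big](\cdot,t-r)(x).
\]
Taking absolute values and using $\|v\|_\infty\le 1$, $\|u_n(\cdot,s)-u_n^n(\cdot,s)\|_\infty\le 1$ and $\|p_r'\|_{L^1}=1/\sqrt{\pi r}$, we obtain, for \emph{every} measurable $v:\R\to[0,1]$ and every $t>0$,
\[
|\partial_x u_n(x,t)|\le \frac{1}{\sqrt{\pi t}}+\int_0^t\frac{\diffd r}{\sqrt{\pi r}}=\frac{1}{\sqrt{\pi t}}+\frac{2\sqrt t}{\sqrt\pi}.
\]

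The second term grows with $t$, so to get a $t$-uniform bound I would now invoke the semigroup property: for any $t>0$ and any $t_0\in(0,t]$, $u_n(\cdot,t)$ is the time-$t_0$ solution of \eqref{mainn} started from the bounded measurable $[0,1]$-valued datum $u_n(\cdot,t-t_0)$, so the displayed inequality applied to this restarted problem yields $|\partial_x u_n(x,t)|\le \frac1{\sqrt{\pi t_0}}+\frac{2\sqrt{t_0}}{\sqrt\pi}$ for all $t_0\in(0,t]$. For $t\le \tfrac12$ take $t_0=t$: then $\frac{2\sqrt t}{\sqrt\pi}\le \frac{\sqrt 2}{\sqrt\pi}\le \frac{\sqrt 8}{\sqrt\pi}$. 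For $t>\tfrac12$ take $t_0=\tfrac12$, the minimiser of $t_0\mapsto \frac1{\sqrt{\pi t_0}}+\frac{2\sqrt{t_0}}{\sqrt\pi}$, which gives $|\partial_x u_n(x,t)|\le \frac{\sqrt 2}{\sqrt\pi}+\frac{\sqrt 2}{\sqrt\pi}=\frac{\sqrt 8}{\sqrt\pi}$. In both cases $|\partial_x u_n(x,t)|\le \frac1{\sqrt{\pi t}}+\frac{\sqrt 8}{\sqrt\pi}$, which is \eqref{unprimebound}. This is essentially the argument of \cite[Section~4]{Uchiyama1978}, which we therefore merely cite.

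I do not expect a genuine obstacle here; the work is all bookkeeping. One should check that \eqref{unrep} is valid for merely measurable bounded $v$ (it is, by Proposition~\ref{lem:FKw}, as recorded in the corollary preceding this lemma), that differentiation under the integral is legitimate (Lemma~\ref{lemsmooth}, using the $1/\sqrt r$ bound above), and that the restart step is admissible (it is, since \eqref{mainn} is well posed and autonomous and $u_n(\cdot,t-t_0)$ is a bounded measurable function into $[0,1]$, so the same estimate applies to it verbatim). The only point requiring a little care is recovering the precise constant $\sqrt 8/\sqrt\pi$, which is why the restart time is taken to be exactly the minimiser $t_0=\tfrac12$ of $t_0\mapsto \tfrac1{\sqrt{\pi t_0}}+\tfrac{2\sqrt{t_0}}{\sqrt\pi}$.
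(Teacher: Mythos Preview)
Your argument is correct and is essentially identical to the paper's own proof: differentiate the Duhamel formula \eqref{unrep} via Lemma~\ref{lemsmooth} to obtain $|\partial_x u_n(x,t)|\le \frac1{\sqrt{\pi t}}+\frac{2\sqrt t}{\sqrt\pi}$, then use the semigroup property with restart time $t_0=\tfrac12$ (the minimiser) for $t>\tfrac12$ and the bound itself for $t\le\tfrac12$. The only cosmetic difference is that the paper states the two cases more tersely.
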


\begin{proof}We briefly recall Uchiyama's proof. Using Lemma~\ref{lemsmooth} to differentiate~\eqref{unrep} with respect to $x$, and bounding the result  (using
$v\in[0,1]$ and $u_n-u_n^n\in[0,1]$) yields:
$$|\partial_x u_n(x,t)|\le \int_{-\infty}^\infty  \big|p_t'(x-y) \big|\diffd y+\int_0^t\diffd s\,\int_{-\infty}^\infty \diffd
y\, \big|p'_{s}(x-y) \big|
=\frac1{\sqrt{\pi t}}+2\frac{\sqrt t}{\sqrt\pi}.
$$
This bound reaches its minimum $\sqrt{8/\pi}$ at  $t=1/2$. 
For $t\leq 1/2$, the result follows immediately.
For $t\ge1/2$,
since
$u_n(\cdot,t)$ is also the solution at time $1/2$ of~\eqref{mainn} with initial condition $u_n(\cdot,t-1/2)$, it follows that $|\partial_x u_n(x,t)|\le\sqrt{8/\pi}$ $\forall x\in \R$ and $t\geq 1/2$.
\end{proof}

In the following two lemmas, we prove the continuity of $u$.

\begin{lem}\label{lem:lipschitz}
For any $t>0$, the map $x\mapsto u(x,t)$ is
Lipschitz continuous, with Lipschitz constant 
$\frac1{\sqrt{ \pi t}} +\frac{\sqrt
8}{\sqrt\pi}$.
\end{lem}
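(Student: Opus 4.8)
The plan is to transfer the uniform Lipschitz bound on each $u_n$ from Lemma~\ref{lem:uchi} to the pointwise limit $u$. Fix $t>0$. By Lemma~\ref{lem:uchi}, for every $n\ge 2$ and every $x,y\in\R$ we have
\[
|u_n(x,t)-u_n(y,t)|\le \Big(\tfrac1{\sqrt{\pi t}}+\tfrac{\sqrt 8}{\sqrt\pi}\Big)|x-y|,
\]
since the right-hand side of \eqref{unprimebound} does not depend on $n$. Now let $n\to\infty$: by the definition \eqref{mainu}, $u_n(x,t)\to u(x,t)$ and $u_n(y,t)\to u(y,t)$ pointwise, so the inequality passes to the limit and gives
\[
|u(x,t)-u(y,t)|\le \Big(\tfrac1{\sqrt{\pi t}}+\tfrac{\sqrt 8}{\sqrt\pi}\Big)|x-y|
\]
for all $x,y\in\R$. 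This is exactly the claimed Lipschitz continuity with constant $\frac1{\sqrt{\pi t}}+\frac{\sqrt 8}{\sqrt\pi}$.

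There is essentially no obstacle here: the whole argument is the observation that a uniform (in $n$) Lipschitz estimate is stable under pointwise limits, combined with the fact that Uchiyama's bound in Lemma~\ref{lem:uchi} is already $n$-uniform. The only thing worth spelling out is that \eqref{unprimebound} is a bound on $|\partial_x u_n|$ valid at every point $(x,t)$ with $t>0$, so that integrating along the segment from $x$ to $y$ (on which $u_n(\cdot,t)$ is smooth, since $u_n$ solves the smooth parabolic equation \eqref{mainn} for $t>0$) yields the stated Lipschitz bound for $u_n(\cdot,t)$ with the same constant for every $n$. One could alternatively phrase this directly from the convolution representation \eqref{unrep} as in the proof of Lemma~\ref{lem:uchi}, but invoking Lemma~\ref{lem:uchi} as a black box is cleanest.

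If one wanted to be slightly more careful about the case $v\equiv 0$ or $v\equiv 1$ (where some of the earlier remarks note $u_n$ may be identically $0$ or $1$), the bound is trivially true in those degenerate cases as well, so no separate treatment is needed. Thus the proof is a two-line consequence of Lemma~\ref{lem:uchi} and the monotone pointwise convergence \eqref{mainu}.
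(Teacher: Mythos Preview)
Your proof is correct and follows exactly the same approach as the paper: use the $n$-uniform derivative bound \eqref{unprimebound} from Lemma~\ref{lem:uchi} to get a Lipschitz estimate for each $u_n(\cdot,t)$, then pass to the pointwise limit~\eqref{mainu}. The paper's version is simply the two-line form of your argument.
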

\begin{proof}
For $x\in \R$ and $h>0$, we can write, using \eqref{unprimebound},
$$\Big|u_n(x+h,t)-u_n(x,t)\Big|\le \bigg(\frac1{\sqrt{ \pi t}} +\frac{\sqrt
8}{\sqrt\pi}\bigg)h.
$$
Then take the $n\to\infty$ limit to conclude.
\end{proof}

\begin{lem}\label{lem:cont t}
The map $(x,t)\mapsto u(x,t)$ is
continuous on $\R\times (0,\infty)$. Furthermore, $u(\cdot,t)\to v$ in $L^1_\text{loc}$ as $t\searrow 0$,
and if $v$ is continuous at $x$ then $u(x,t)\to v(x)$ as $t\searrow 0$.
\end{lem}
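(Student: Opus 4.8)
The plan is to prove continuity of $u$ on $\R\times(0,\infty)$ by combining the spatial Lipschitz bound of Lemma~\ref{lem:lipschitz} with a uniform (in $n$) modulus of continuity in time, then to use the representation \eqref{unrep} to control the short-time behaviour. First I would note that it suffices to prove separate continuity estimates: since Lemma~\ref{lem:lipschitz} already gives uniform spatial Lipschitz continuity on every strip $\R\times[t_0,\infty)$, the function $(x,t)\mapsto u(x,t)$ will be jointly continuous there as soon as $t\mapsto u(x,t)$ is continuous for each fixed $x$ (indeed, locally uniformly). To get continuity in $t$ on $[t_0,\infty)$, I would apply the semigroup property together with the representation \eqref{unrep}: writing $u_n(\cdot,t+\delta)$ via \eqref{unrep} with initial condition $u_n(\cdot,t)$ and a time increment $\delta$, I get
\[
u_n(x,t+\delta)=p_\delta\conv u_n(x,t)+\int_0^\delta\diffd r\,p_r\conv\big[u_n(x,t+\delta-r)-u_n^n(x,t+\delta-r)\big].
\]
The integral term is bounded by $\delta$ in absolute value since $u_n-u_n^n\in[0,1]$, and $p_\delta\conv u_n(x,t)-u_n(x,t)\to 0$ as $\delta\to0$ uniformly in $n$ because $u_n(\cdot,t)$ is Lipschitz with constant independent of $n$ (by Lemma~\ref{lem:uchi}); explicitly $|p_\delta\conv g(x)-g(x)|\le \mathrm{Lip}(g)\,\E_0|B_\delta|=\mathrm{Lip}(g)\sqrt{4\delta/\pi}$. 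Passing to the limit $n\to\infty$ gives an $n$-independent modulus of continuity in $t$ for $u$, hence joint continuity on $\R\times(0,\infty)$.

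Next, for the $L^1_\text{loc}$ convergence as $t\searrow 0$, I would again start from \eqref{unrep}: $u_n(x,t)=p_t\conv v(x)+R_n(x,t)$ where $|R_n(x,t)|\le t$ uniformly. Letting $n\to\infty$ gives $|u(x,t)-p_t\conv v(x)|\le t$ for all $x$, $t$. Since $v\in L^\infty\subseteq L^1_\text{loc}$, the standard approximate-identity property gives $p_t\conv v\to v$ in $L^1_\text{loc}$ as $t\searrow0$, and the error term $R$ vanishes uniformly, so $u(\cdot,t)\to v$ in $L^1_\text{loc}$. For the pointwise statement at a continuity point $x$ of $v$, the same inequality $|u(x,t)-p_t\conv v(x)|\le t$ reduces matters to showing $p_t\conv v(x)\to v(x)$, which is the classical fact that convolution with the heat kernel recovers a bounded function at its continuity points (split the convolution integral into $|x-y|<\eta$ and $|x-y|\ge\eta$, using continuity on the first piece and boundedness plus $\int_{|z|\ge\eta}p_t(z)\diffd z\to0$ on the second).

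The main obstacle is the uniformity in $n$ of the time-modulus estimate — one must be careful that the Lipschitz constant appearing in the bound $|p_\delta\conv u_n(\cdot,t)-u_n(\cdot,t)|\le\mathrm{Lip}(u_n(\cdot,t))\sqrt{4\delta/\pi}$ does not blow up as $n\to\infty$, which is exactly why Lemma~\ref{lem:uchi} (giving a bound independent of $n$, and on every strip $t\ge t_0$) is invoked rather than an $n$-dependent parabolic-regularity estimate. A secondary subtlety is that I want local uniformity in $x$ for the joint-continuity conclusion; but since both the spatial Lipschitz bound and the temporal modulus are uniform in $x$ (on the relevant strip), this is immediate, and no compactness argument is really needed. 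Everything else — the approximate identity property of $p_t$, the elementary bound $\E_0|B_\delta|=\sqrt{4\delta/\pi}$ — is routine.
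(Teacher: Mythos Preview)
Your proposal is correct and essentially matches the paper's proof: both use \eqref{unrep} (applied with initial time $t_0$) together with $u_n-u_n^n\in[0,1]$ to get $u_n(x,t_0+\delta)-p_\delta\conv u_n(x,t_0)\in[0,\delta]$, then the $n$-independent Lipschitz bound to control $|p_\delta\conv u_n(x,t_0)-u_n(x,t_0)|$, and the approximate-identity property of $p_t$ for the behaviour as $t\searrow0$. The only cosmetic difference is that the paper first lets $n\to\infty$ in the bound $u_n(x,t_0+\delta)-p_\delta\conv u_n(x,t_0)\in[0,\delta]$ and then invokes Lemma~\ref{lem:lipschitz} for $u$, whereas you apply Lemma~\ref{lem:uchi} at the level of $u_n$ and pass to the limit afterwards; the resulting estimates are identical.
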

\begin{proof}
Using the bound $u_n-u_n^n\in[0,1]$ in the expression for $u_n$ in \eqref{unrep}, we have that for $x\in \R$, $t_0\geq 0$ and $t>0$,
$$u_n(x,t)-p_t\conv v(x) \in[0, t]\qquad\text{and}\qquad
  u_n(x,t_0+t) -p_t\conv u_n(x,t_0)\in[0, t], $$
where for the second
expression we used that $u_n(\cdot,t+t_0)$ is the solution at time $t$ of~\eqref{mainn} with initial condition $u_n(\cdot,t_0)$. Taking the $n\to\infty$ limit, it follows that
$$u(x,t)-p_t\conv v(x) \in[0,t]\qquad\text{and}\qquad
u(x,t_0+t) -p_t\conv u(x,t_0)\in[0,t]. $$
Since the solution to the heat equation
$p_t\conv v$ converges to $v$ in $L^1_\text{loc}$
as $t\searrow 0$,
we have that $u(\cdot,t)\to v$ in $L^1_\text{loc}$.
If $v$ is continuous at $x$, then $p_t \ast v(x) \to v(x)$ as $t\searrow 0$, and hence $u(x,t)\to v(x)$ as $t\searrow 0$.

It remains to prove that $u$ is continuous.
By Lemma~\ref{lem:lipschitz}, we have
$$\begin{aligned}
\left|p_t\conv u(x,t_0)-u(x,t_0) \right|
&=\left|\E_x\Big[u(B_t,t_0)-u(x,t_0)\Big]\right|
\\
&\leq \left( \frac1{\sqrt{ \pi t_0}} +\frac{\sqrt
8}{\sqrt\pi}\right)
\E_x \left[ |B_t -x|\right]
=\left( \frac1{\sqrt{ \pi t_0}} +\frac{\sqrt
8}{\sqrt\pi}\right)\sqrt{\frac{4t}{\pi}}.
\end{aligned}
$$
Therefore by the triangle inequality,
$$
\left| u(x,t_0+t)-u(x,t_0) \right|
\leq t +\left( \frac1{\sqrt{ \pi t_0}} +\frac{\sqrt
8}{\sqrt\pi}\right)\sqrt{\frac{4t}{\pi}}.
$$
Hence by the triangle inequality and then by Lemma~\ref{lem:lipschitz},
for $x_1$, $x_2\in \R$, $t_0\geq 0$ and $t>0$,
\begin{align*}
\left| u(x_1,t_0+t)-u(x_2,t_0) \right|
&\leq \left| u(x_1,t_0+t)-u(x_1,t_0) \right|
+ \left| u(x_1,t_0)-u(x_2,t_0) \right|
\\
&\leq t+\left( \frac1{\sqrt{ \pi t_0}} +\frac{\sqrt
8}{\sqrt\pi}\right)\left(\sqrt{\frac{4t}{\pi}}+|x_1-x_2|\right)
,
\end{align*}
and the result follows.
\end{proof}

We now turn to the semigroup property.
\begin{lem} \label{lem:utilde}
Suppose $v:\R\to[0,1]$, take $t_0\geq 0$ and, as throughout this section, let $u_n$ and
$u$ denote the functions defined in~\eqref{mainn} and~\eqref{mainu}. 
Furthermore, for $t\ge t_0$, let $u_{n;t_0}(\cdot,t)$ denote
the solution at time $t-t_0$ to \eqref{mainn} with the initial condition $v(\cdot)$
replaced by $u(\cdot,t_0)$. Then for $t\ge t_0$ and $x\in \R$,
$$\lim_{n\to\infty} u_{n;t_0}(x,t)=\lim_{n\to\infty} u_n(x,t)=u(x,t).$$
\end{lem}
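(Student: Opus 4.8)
The plan is to use the semigroup structure of the approximating Fisher–KPP equations together with the monotonicity in $n$ and a sandwiching argument. First, observe that for each fixed $n$, the function $u_n$ genuinely satisfies the semigroup property: $u_n(\cdot,t)$ is the solution at time $t-t_0$ of~\eqref{mainn} with initial condition $u_n(\cdot,t_0)$. Since $u_n(\cdot,t_0)\le u(\cdot,t_0)$ for all $n$ (by the definition of $u$ as the increasing limit), the comparison principle for~\eqref{mainn} gives $u_n(\cdot,t)\le u_{n;t_0}(\cdot,t)$ for $t\ge t_0$. Taking $n\to\infty$ on the left yields $u(x,t)\le \liminf_n u_{n;t_0}(x,t)$.

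For the reverse inequality, I would fix $m\ge 2$ and compare the family $(u_{n;t_0})_{n\ge m}$ against $u_{m;t_0}$'s building blocks. The key point is that $u_n(\cdot,t_0)\nearrow u(\cdot,t_0)$ pointwise, so for any fixed $m$ and $\epsilon>0$ the truncated initial data are close; more precisely, I would like to bound $u_{n;t_0}(\cdot,t)$ from above in terms of $u_n(\cdot,t)$ up to an error controlled by the nonlinearity. Concretely, using the representation~\eqref{unrep2} (with $K=1$, $S=-u^n$) applied to both $u_{n;t_0}$ (started from $u(\cdot,t_0)$ at time $t_0$) and $u_n$ (started from $u_n(\cdot,t_0)$ at time $t_0$), subtract: the difference $d_n(\cdot,t):=u_{n;t_0}(\cdot,t)-u_n(\cdot,t)$ satisfies
\[
d_n(x,t)=e^{t-t_0}p_{t-t_0}\conv\big(u(\cdot,t_0)-u_n(\cdot,t_0)\big)(x)-\int_{t_0}^{t}\diffd r\, e^{r-t_0}\, p_{r-t_0}\conv\big(u_{n;t_0}^n(\cdot,?)-u_n^n(\cdot,?)\big)(x),
\]
and since $d_n\ge 0$ (from the first paragraph) and $u_{n;t_0},u_n\in[0,1]$, one gets the clean bound $0\le d_n(x,t)\le e^{t-t_0}\,p_{t-t_0}\conv\big(u(\cdot,t_0)-u_n(\cdot,t_0)\big)(x)$. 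By monotone convergence, $u(\cdot,t_0)-u_n(\cdot,t_0)\searrow 0$ pointwise and is dominated by $1$, so $p_{t-t_0}\conv\big(u(\cdot,t_0)-u_n(\cdot,t_0)\big)(x)\to 0$ as $n\to\infty$ for each fixed $x$ and $t>t_0$ (dominated convergence against the integrable kernel $p_{t-t_0}$). Hence $d_n(x,t)\to 0$, which combined with $u_n(x,t)\to u(x,t)$ gives $u_{n;t_0}(x,t)\to u(x,t)$. The case $t=t_0$ is immediate since both sides equal $u(x,t_0)$.

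The main obstacle I anticipate is making the subtraction of the two integral representations fully rigorous when the two solutions start from different initial data at the same time $t_0$ — one must check that $u_{n;t_0}$ really does obey~\eqref{unrep2} with base time $t_0$ and initial datum $u(\cdot,t_0)$ (which is bounded and measurable, so the earlier Feynman–Kac corollary applies after a time shift), and that the nonlinear terms, though not sign-definite individually, drop out favourably because $d_n\ge0$ forces $u_{n;t_0}\ge u_n$ pointwise and hence $u_{n;t_0}^n\ge u_n^n$, making the integral term nonnegative and thus only helping the upper bound. Once that sign bookkeeping is in place, the convergence is a routine application of dominated convergence with the heat kernel, and no further regularity of $u$ is needed beyond $u(\cdot,t_0)\in[0,1]$ measurable, which is guaranteed by Theorem~\ref{thm u}.
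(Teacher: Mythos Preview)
Your proof is correct and takes essentially the same approach as the paper: establish $u_n\le u_{n;t_0}$ via the comparison principle, then bound the difference $u_{n;t_0}-u_n$ above by $e^{t-t_0}\,p_{t-t_0}\conv\big(u(\cdot,t_0)-u_n(\cdot,t_0)\big)$ using a Feynman--Kac representation and the sign of the nonlinear correction, and conclude by dominated convergence. The only cosmetic difference is that the paper uses the multiplicative representation~\eqref{eq:FKt} (splitting the exponential weight) whereas you use the additive representation~\eqref{unrep2}; both routes yield exactly the same final estimate, and your version is arguably a touch cleaner once the sign bookkeeping (which you correctly identify) is in place. You can safely drop the stray ``fix $m\ge 2$'' sentence, which you do not end up using.
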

\begin{proof}
Since $u_n(x,t_0)\leq u(x,t_0)$ $\forall x\in \R$, it follows by the
comparison principle that $u_n(x,t)\leq u_{n;t_0}(x,t)$ $\forall
x\in \R$, $t\geq t_0$. Then for $t\geq t_0$, by the
Feynman-Kac formula~\eqref{eq:FKt},
\begin{align*}
&u_{n;t_0}(x,t)- u_n(x,t)\\
& \quad =e^{t-t_0}\E_x\left[
u(B_{t-t_0},t_0)e^{-\int_0^{t-t_0} u_{n;t_0}^{n-1}(B_s,t-s)\,\diffd s}
-u_n(B_{t-t_0},t_0)e^{-\int_0^{t-t_0}u_n^{n-1}(B_s,t-s)\,\diffd s}
\right]\\
&\quad=e^{t-t_0}\E_x\left[
\Big(u(B_{t-t_0},t_0)-u_n(B_{t-t_0},t_0)\Big)e^{-\int_0^{t-t_0}
u_{n;t_0}^{n-1}(B_s,t-s)\,\diffd s}
\right]\\
&\quad\quad +e^{t-t_0}\E_x\left[
u_n(B_{t-t_0},t_0)\left(e^{-\int_0^{t-t_0}
u_{n;t_0}^{n-1}(B_s,t-s)\,\diffd s}-e^{-\int_0^{t-t_0}u_n^{n-1}(B_s,t-s)\,\diffd s}\right)
\right]\\
&\quad\leq e^{t-t_0}\E_x\Big[
u(B_{t-t_0},t_0)-u_n(B_{t-t_0},t_0)
\Big],
\end{align*}
where, in the last step, we used that $u_{n;t_0}\ge u_n$, $u_n \geq 0$, $u\ge u_n$ and
$u_{n;t_0}\ge0$. 
By dominated convergence, the right hand side converges to zero as $n\to\infty$,
and this completes the proof.
\end{proof}

At this point, it is convenient to introduce the two sets
\begin{equation} \label{eq:UandS}
\begin{aligned}
U&:=\big\{(x,t)\in\R\times(0,\infty): u(x,t)=1\big\}\\
\text{ and } \qquad S&:=\big\{(x,t)\in\R\times(0,\infty): u(x,t)<1\big\}.
\end{aligned}
\end{equation}
By the
continuity of $u$, the set $S$ is open.

The next proposition focuses on the set $S$, while
Proposition~\ref{prop:limunn} below is about the behaviour of $u_n$ in the
set~$U$.

\begin{prop} \label{prop:pde}
The map $u $ is $C^{2,1}$ on $S$ and satisfies
\begin{equation}
\partial_t u = \partial_x^2 u + u\qquad\text{on $S$.}
\end{equation}
\end{prop}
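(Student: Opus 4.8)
The plan is to work locally near an arbitrary point $(x_0,t_0)\in S$, represent $u$ by a Feynman--Kac formula on a small space-time box, and then apply Lemma~\ref{lem:FKreverse} to conclude that $u$ solves the PDE there. Since $S$ is open (by continuity of $u$, established in Lemma~\ref{lem:cont t}) and $u(x_0,t_0)<1$, I can choose $\varepsilon>0$ and $\delta\in(0,t_0)$ small enough that the closed box $Q=[x_0-\varepsilon,x_0+\varepsilon]\times[t_0-\delta,t_0]$ is contained in $S$; by continuity and compactness there is $\eta>0$ with $u\le 1-\eta$ on $Q$, hence $u_n^{n-1}\le (1-\eta)^{n-1}\to 0$ uniformly on $Q$.

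First I would establish the Feynman--Kac identity
\[
u(x,t)=\E_x\!\left[u(B_\tau,t-\tau)\,e^{\tau}\right],\qquad (x,t)\in (x_0-\varepsilon,x_0+\varepsilon)\times(t_0-\delta,t_0],
\]
where $\tau=\inf\{s\ge 0:B_s\in\{x_0-\varepsilon,x_0+\varepsilon\}\}\wedge(t-(t_0-\delta))$, by passing to the limit in the analogous identity for $u_n$. Concretely, apply Proposition~\ref{lem:FKw} under condition~1 to $u_n$ on the box with $K=1-u_n^{n-1}$, $S=0$ and the exit time $\tau$ above (note $(B_s,t-s)$ stays in the interior time-slab for $s\le\tau$), to get
\[
u_n(x,t)=\E_x\!\left[u_n(B_\tau,t-\tau)\,e^{\int_0^{\tau}(1-u_n^{n-1}(B_s,t-s))\,\diffd s}\right].
\]
On the box, $0\le\int_0^\tau u_n^{n-1}(B_s,t-s)\,\diffd s\le \delta\,(1-\eta)^{n-1}\to 0$, so the exponential correction tends to $1$ uniformly; meanwhile $u_n(B_\tau,t-\tau)\to u(B_\tau,t-\tau)$ pointwise and boundedly (all quantities lie in $[0,1]$, and $e^\tau\le e^{t_0}$), so dominated convergence gives the limiting identity with kernel $e^\tau$. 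Since $u$ is continuous and nonnegative on $Q$, Lemma~\ref{lem:FKreverse} applies on $(x_0-\varepsilon,x_0+\varepsilon)\times(t_0-\delta,t_0)$ and yields $u\in C^{2,1}$ there with $\partial_t u=\partial_x^2 u+u$. As $(x_0,t_0)\in S$ was arbitrary and the statement is local, this proves the proposition.

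The main obstacle is the justification of the interchange of limit and expectation in the boundary term, which is where the uniform bound $u\le 1-\eta$ on the box is essential: it forces the $u_n^{n-1}$ term to vanish exponentially fast in $n$, killing the discrepancy between the $u_n$-weight $e^{\int(1-u_n^{n-1})}$ and the clean weight $e^\tau$. One should also check the mild technical point that Lemma~\ref{lem:FKreverse} is stated with exit time from $\{a,b\}$ capped at $t-t_0$; here I have shifted so that the lower time endpoint is $t_0-\delta$, and I must verify that the hypothesis "$g(x,t)=\E_x[g(B_\tau,t-\tau)e^\tau]$ for all $(x,t)$ in the (closed) box" is exactly what the passage to the limit delivers — which it is, uniformly on $Q$.
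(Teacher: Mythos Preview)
Your argument is correct and essentially identical to the paper's: choose a closed box in $S$, use the Feynman--Kac formula~\eqref{eq:FKtau} for $u_n$ with the box-exit stopping time, let $n\to\infty$ (your uniform bound $u\le 1-\eta$ on the compact box, hence $u_n\le 1-\eta$ since $u_n\nearrow u$, is a slightly cleaner way to force $\int_0^\tau u_n^{n-1}\,\diffd s\to 0$ than the paper's pointwise-along-paths argument), and then invoke Lemma~\ref{lem:FKreverse}. The only nit is that your box $Q=[x_0-\varepsilon,x_0+\varepsilon]\times[t_0-\delta,t_0]$ has $(x_0,t_0)$ on its top edge, so Lemma~\ref{lem:FKreverse} gives $C^{2,1}$ only on the open box $(x_0-\varepsilon,x_0+\varepsilon)\times(t_0-\delta,t_0)$ and not at $(x_0,t_0)$ itself; simply take the time interval $[t_0-\delta,t_0+\delta]$ instead (possible since $S$ is open), as the paper does.
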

\begin{proof} 
Choose $(x,t)\in S$.
Let $a$, $b$, $t_0$ and $t_1$ be such that $x\in(a,b)$, $t\in(t_0,t_1)$ and
 $[a,b]\times[t_0,t_1]\subset S$.
By~\eqref{eq:FKtau}, we have that for $(x',t')\in [a,b]\times[t_0,t_1]$,
\begin{equation}
u_n(x',t')= \E_{x'}\Big[u_n(B_\tau,t'-\tau)e^{\int_0^\tau \big(1-
u_n^{n-1}(B_s,t'-s)\big)\diffd s}
\Big],
\label{unFK}
\end{equation}
where $\tau=(t'-t_0)\wedge \inf  \{s\geq 0: B_s \not \in (a,b) \}$ is
the time at which $(B_\tau,t'-\tau)$ hits the boundary of
$[a,b]\times[t_0,t_1]$.

We now take the $n\to\infty$ limit. For a given Brownian path $(B_s)_{s\geq 0}$, since $(B_s,t'-s)\in
S$ for $s\in[0,\tau]$, we have $u_n(B_s,t'-s)\to u(B_s,t'-s)<1$ as $n\to \infty$ for $s\in [0,\tau]$ and so, since $\tau \le t'-t_0$,
$$
\int_0^\tau 
u_n^{n-1}(B_s,t'-s)\diffd s \to 0 \quad \text{ as } n\to \infty.
$$
Hence by dominated convergence in \eqref{unFK},
\begin{equation} \label{eq:pdefk}
u(x',t')= \E_{x'}\Big[u(B_\tau,t'-\tau)e^\tau \Big]
\qquad\text{for any $(x',t')\in [a,b]\times [t_0,t_1]$}.
\end{equation}
The result then follows by Lemma \ref{lem:FKreverse}.
\end{proof}
To complete the proof of Theorem~\ref{mainu}, it only remains to
note that if $v\uppar1\le v\uppar2$ are two measurable functions, and
if $u_n\uppar i$ is the solution to \eqref{mainn} with initial condition
$v\uppar i$, then by the comparison principle $u_n\uppar1\le u_n\uppar2$ and hence
$u\uppar1\le u\uppar2$.

We finish this section by proving two more results on the behaviour of $u_n$ which will be used in the proof of Proposition~\ref{prop mu} in the next section, but which do not require any additional assumptions on $v$.
\begin{prop}\label{prop:limunn}
If $(x,t)$ is in the interior of $U$,
then
\begin{equation*}
\lim_{n\to\infty} u_n^n(x,t)=1.
\end{equation*}
\end{prop}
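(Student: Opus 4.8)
The plan is to exploit the representation \eqref{unrep2}, which for any fixed $t>0$ and $x\in\R$ reads
\[
u_n(x,t)=e^t p_t\conv v(x) - \int_0^t\diffd r\, e^r p_r\conv u_n^n(x,t-r).
\]
Since $u_n^n\le u_n\le 1$, the integral term is bounded above by $e^t-1$; this is not quite sharp enough on its own, so I would instead argue locally. Fix $(x_0,t_0)$ in the interior of $U$, and pick $\delta>0$ so that $[x_0-\delta,x_0+\delta]\times[t_0-\delta,t_0+\delta]\subset U$, i.e.\ $u\equiv 1$ there. The key point is that $u_n\nearrow u$ pointwise and $u_n(x,t)\in(0,1)$, so on this little box we have $u_n\to 1$, hence $u_n^n\to ?$ — the delicate issue being exactly that $u_n^n$ need not converge to $1$ even when $u_n\to 1$, unless the convergence $u_n\to 1$ is fast enough (like $1-u_n = o(1/n)$).

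The cleanest route is a contradiction argument combined with the semigroup/Feynman–Kac structure. Suppose $\liminf_n u_n^n(x_0,t_0) = c < 1$ along a subsequence. Using the Feynman–Kac formula \eqref{eq:FKt} with $K = 1-u_n^{n-1}$ on a short time interval, or more directly \eqref{unrep2} restricted to a time step $\varepsilon<\delta$ starting from $t_0-\varepsilon$:
\[
u_n(x_0,t_0)=e^\varepsilon p_\varepsilon\conv u_n(x_0,t_0-\varepsilon)-\int_0^\varepsilon\diffd r\,e^r p_r\conv u_n^n(x_0,t_0-r).
\]
Now $u_n(\cdot,t_0-\varepsilon)\le 1$ everywhere and equals $1$ on $[x_0-\delta,x_0+\delta]$, so $p_\varepsilon\conv u_n(x_0,t_0-\varepsilon)\ge \P_{x_0}(|B_\varepsilon-x_0|\le\delta)=:1-\eta(\varepsilon,\delta)$ with $\eta\to 0$ as $\varepsilon\to 0$. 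Therefore
\[
u_n(x_0,t_0)\ge e^\varepsilon(1-\eta)-\int_0^\varepsilon\diffd r\,e^r p_r\conv u_n^n(x_0,t_0-r).
\]
Taking $n\to\infty$, the left side tends to $u(x_0,t_0)=1$; by Fatou (the integrand is nonnegative) the $\limsup$ of the integral term is at most what we get from $\limsup_n u_n^n$. If I can show that $\limsup_n u_n^n(x,t_0-r)<1$ on a set of positive measure near $(x_0,t_0)$ forces the integral to be bounded away from $0$ by a quantity that does not vanish as $\varepsilon\to 0$ relative to the $e^\varepsilon(1-\eta)-1$ gain, I reach a contradiction. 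Concretely: $e^\varepsilon - 1 \approx \varepsilon$, and I need the integral term to be $\ge$ roughly $\varepsilon\cdot(\text{something})$, which it is, with the "something" controlled by $\liminf u_n^n$; matching constants and letting $\varepsilon\to 0$ pins down $\lim u_n^n = 1$.

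The main obstacle is precisely controlling the nonlinear term $u_n^n$: pointwise $u_n\to 1$ is far weaker than $u_n^n\to 1$, so the whole argument must extract extra speed of convergence from the PDE. The cleanest way to organize this is: (i) show from \eqref{unrep2} and $u\equiv 1$ on the box that $\int_0^\varepsilon e^r p_r\conv(1-u_n^n)(x_0,t_0-r)\,\diffd r \to e^\varepsilon(1-\eta_n) - u_n(x_0,t_0)$ where the right side $\to e^\varepsilon - 1 - (e^\varepsilon-1) = $ small, more precisely $\to 0$ as $\varepsilon\to0$ uniformly in the relevant sense; (ii) since $1-u_n^n\ge 0$ and, by the spatial Lipschitz bound of Lemma~\ref{lem:uchi} together with $u_n\le 1$, the map $(x,r)\mapsto 1-u_n^n(x,t_0-r)$ is, after passing to the limit, lower semicontinuous with $\limsup$-value at most $1-\liminf u_n^n$; deduce that $\liminf_n u_n^n(x_0,t_0)\ge 1-C\varepsilon$ for every small $\varepsilon$, hence $=1$. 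A convenient technical simplification: replace the "hard" time-dependence by using \eqref{eq:FKt} with the exponential weight $e^{\int_0^\varepsilon(1-u_n^{n-1}(B_s,\cdot))\,\diffd s}$, note $1-u_n^{n-1}\le 1$ so the weight is at most $e^\varepsilon$, and $u_n(x_0,t_0)=e^\varepsilon\E_{x_0}[v'(B_\varepsilon)e^{-\int_0^\varepsilon u_n^{n-1}}]$ with $v' = u_n(\cdot,t_0-\varepsilon)\equiv 1$ near $x_0$; then $1-u_n^{n-1}\ge 1-u_n^n$ pointwise is false in the wrong direction, so one really does need to keep the $S=-u_n^n$ source term as above rather than fold it into $K$. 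I would present the final write-up via \eqref{unrep2} as sketched.
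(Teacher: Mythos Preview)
Your approach via \eqref{unrep2} does establish that the \emph{spacetime average} of $1-u_n^n$ near $(x_0,t_0)$ tends to zero; concretely, your identity yields
\[
\lim_{n\to\infty}\int_0^\varepsilon e^r\,p_r\conv(1-u_n^n)(x_0,t_0-r)\,\diffd r
= e^\varepsilon\big(1-p_\varepsilon\conv u(x_0,t_0-\varepsilon)\big)\le e^\varepsilon\eta(\varepsilon),
\]
with $\eta(\varepsilon)$ exponentially small. The gap is in step~(ii): you cannot extract the \emph{pointwise} conclusion $u_n^n(x_0,t_0)\to 1$ from this using Lemma~\ref{lem:uchi}. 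The Lipschitz bound there is on $u_n$, not on $u_n^n$; for the latter one only has $|\partial_x(u_n^n)|\le nC$, so if along a subsequence $u_n^n(x_0,t_0)\le 1-\gamma$, equivalently $1-u_n(x_0,t_0)\gtrsim \gamma/n$, the spatial Lipschitz bound on $u_n$ forces $1-u_n^n(\cdot,t_0)\gtrsim \gamma'$ only on an interval of length $O(1/n)$. That region has vanishing measure and contributes nothing to the integral in the limit, so no contradiction arises. Your ``lower semicontinuity after passing to the limit'' is exactly what fails: there is no equicontinuity of $u_n^n$ uniform in $n$.

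This is not an oversight you could patch with a routine estimate: the paper itself records (just before the proof) that the authors tried the same heuristic---taking $n\to\infty$ in a Feynman--Kac identity on a box in $U$---and \emph{could not turn it into a proof}. Their actual argument is entirely different: they compare $u_n$ from below with the solution $w_n$ of \eqref{mainn} started from $(1-\tfrac\epsilon2)\indic{|y|\le\epsilon^{0.49}}$, show via a symmetry/sign argument that $\partial_y^2 w_n(0,s)$ is negligible on $[0,\epsilon]$, and then reduce to explicit ODE comparisons ($y_n$, then $z_n$) whose solutions give $\liminf_n u_n^n(x,t)\ge 1-\epsilon$. The key idea you are missing is this reduction to a spatially localized comparison problem with a quantitatively controlled second derivative, which is what converts ``$u_n$ close to $1$'' into the sharper ``$1-u_n=O(1/n)$'' needed for $u_n^n\to 1$.
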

Before proving this result properly, we give a heuristic explanation.
As in the proof of Proposition~\ref{prop:pde}, choose a rectangle
$[a,b]\times[t_0,t_1]$ in the interior of $U$, and write \eqref{unFK} for
a point $(x',t')\in(a,b)\times(t_0,t_1)$. We take the limit
$n\to\infty$ again. By construction, $u_n(x',t')\to1$ and
$u_n(B_\tau,t'-\tau)\to1$, so we obtain
$$1=\lim_{n\to\infty}\E_{x'}
\Big[e^{\int_0^\tau \big(1-
u_n^{n-1}(B_s,t'-s)\big)\diffd s}
\Big].
$$
This equation strongly suggests the result, because if there were a region
where $\limsup_{n\to\infty} u_n^n<1$ which was visited by the paths $(B_s,t'-s)$ with
a strictly positive probability then the limiting expectation above would be larger
than~1. However, we were not able to turn this heuristic into a proper
proof of Proposition~\ref{prop:limunn}, so we used a completely different method.
\begin{proof}
Take $(x,t)$ in the interior of $U$.
For $\epsilon>0$, let
$$A=[-\epsilon^{0.49},\epsilon^{0.49}].$$
(The exponent $0.49$ could be any positive number smaller than $1/2$.)
Choose
$\epsilon$ sufficiently small that $[x-\epsilon^{0.49},x+\epsilon^{0.49}]\times[t-\epsilon,t]\subset U$.
Note that $u_n$ is a monotone sequence and converges pointwise to 1 on $[x-\epsilon^{0.49},x+\epsilon^{0.49}]\times[t-\epsilon,t]$.
Therefore, by Dini's theorem, we can
choose $n_0$ sufficiently large that
$u_n(x+y,t-\epsilon)>1-\frac\epsilon2$ for all $y\in A$ and all $n\ge n_0$.

Let $w_n(y,s)$ denote the solution to
\begin{equation} \label{w_n}
\begin{cases}\displaystyle\partial_s w_n=\partial_y^2
w_n+ w_n- w_n^n &\text{for $y\in \R$ and $s>0$, }\\[1ex]
\displaystyle
w_n(y,0)=\Big(1-\frac\epsilon2\Big)\indic{y\in A} &\text{for $y\in \R$.}\end{cases}
\end{equation}
Then, by the comparison principle, $u_n(x+y,t-\epsilon+s)\ge w_n(y,s)$ for
$n\ge n_0$, $s\ge0$ and $y\in \R$.

Heuristically, the domain $A$ is so ``large'' that, for times
$s\le \epsilon$, the solution $w_n$ behaves locally near $y=0$ as if started from a flat initial condition. 
This  suggests that $\partial_y^2 w_n(0,s)$ is very small for $s\in[0,\epsilon]$.
Indeed, starting from \eqref{unrep2} we have
$$w_n(y,s)=e^{s} p_s\conv w_n(y,0)
- \int_0^s\diffd r \, e^{s-r}  \int_{-\infty}^\infty \diffd z\, p_{s-r}(y-z) w_n^n(z,r).$$
Taking the derivative with respect to $y$, using Lemma~\ref{lemsmooth}, yields
$$
\partial_y w_n(y,s)=e^{s} p'_s\conv w_n(y,0)
- \int_0^s\diffd r \, e^{s-r}  \int_{-\infty}^\infty \diffd z\, p'_{s-r}(y-z) w_n^n(z,r).$$
Then integrating by parts with respect to $z$ in the second term, we have that
$$
\partial_y w_n(y,s)=e^{s} p'_s\conv w_n(y,0)
- \int_0^s\diffd r \, e^{s-r}  \int_{-\infty}^\infty \diffd z\, p_{s-r}(y-z) n \partial_z w_n(z,r) w_n^{n-1}(z,r).$$
Note that $|\partial_z w_n(z,r)|\leq \frac{1}{\sqrt{\pi r}}+\frac{\sqrt{8}}{\sqrt{\pi}}$ $\forall z\in \R$ by Lemma~\ref{lem:uchi}, and the map
$r\mapsto e^{s-r}\frac{1}{\sqrt{s-r}}\left(\frac{1}{\sqrt{\pi r}}+\frac{\sqrt{8}}{\sqrt{\pi}} \right)$
is integrable on $[0,s]$.
Hence by Lemma~\ref{lemsmooth}, we can take the derivative with respect to $y$ again, to obtain, at $y=0$,
$$\partial_y^2 w_n(0,s)
=e^{s} p_s''\conv w_n(0,0)
-n \int_0^s\diffd r\, e^{s-r}\int_{-\infty}^\infty \diffd z\, p'_{s-r}(-z) \partial_z  w
_n(z,r) w_n^{n-1}(z,r).$$
Clearly, $\partial_z w_n(z,r)$ has the opposite sign to
$z$, while $p'_{s-r}(-z)$ has the same sign as $z$.
Hence the double integral is negative and
$$\partial_y^2 w_n(0,s)
  \ge e^{s} p_s''\conv w_n(0,0) 
  =   \Big(1-\frac\epsilon2\Big) e^{s}2p_s'(\epsilon^{0.49})
  = - \Big(1-\frac\epsilon2\Big) e^{s}\frac{\epsilon^{0.49}}{2\sqrt\pi
s^{\frac32}}e^{-\frac{\epsilon^{0.98}}{4s}}.$$
The function $s\mapsto s^{-\frac32}
e^{-\epsilon^{0.98}/(4s)}$ reaches its maximum at
$s=\epsilon^{0.98}/6$ and is increasing on $[0,\epsilon^{0.98}/6)$.
 Thus, for $\epsilon$ small enough,  $s\mapsto s^{-\frac32}
e^{-\epsilon^{0.98}/(4s)}$ is increasing on $[0,\epsilon]$ and so
$$\partial_y^2 w_n(0,s)\ge 
- \epsilon^{-1.01}e^{-\frac{\epsilon^{-0.02}}4}\quad\text{for
$s\in[0,\epsilon]$.}$$
This bound is uniform in $n$ and 
goes to zero faster than $\epsilon$. Thus, we can choose $\epsilon$ small
enough that $\partial_y^2 w_n(0,s)>-\epsilon/2$ $\forall s\in[0,\epsilon]$.
We use this in {\eqref{w_n}} and obtain, by the comparison
principle, $w_n(0,s) \ge y_n(s)$ for $s\in[0,\epsilon]$, where $y_n$ is the solution of 
$$\partial_s y_n(s) = -\frac\epsilon2 +y_n(s)-y_n(s)^n,\qquad
y_n(0)=1-\frac\epsilon2.$$
For $n$ sufficiently large,
$y_n(s)$ is an increasing function of $s$.
Since, for $s\ge 0$, $y_n(s)\ge y_n(0)=1-\epsilon/2$ and
$y_n(s)^n\le e^{n(y_n(s)-1)}$,
we  see, again by the comparison principle, that $y_n(s)\ge z_n(s)$ $\forall s\ge 0$, where $z_n$ is the solution of
$$\partial_s z_n(s) = 1-\epsilon -e^{n(z_n(s)-1)},\qquad
z_n(0)=1-\frac\epsilon2.$$
This last equation can be solved explicitly, giving
$$e^{-n(z_n(s)-1)}
=\frac{1-e^{-n(1-\epsilon)s}}{1-\epsilon}
	+e^{-n\big((1-\epsilon) s -\frac\epsilon2\big)}.$$
Hence setting $s=\epsilon$ and letting $n\to\infty$, we obtain that for $\epsilon$ sufficiently small,
$$\lim_{n\to\infty} e^{-n(z_n(\epsilon)-1)}=\frac1{1-\epsilon }.
$$
It follows that as $n\to \infty$,
$$
z_n(\epsilon)=1+\frac{\log(1-\epsilon)}n
+o\left(\frac1n\right).
$$
Therefore
$$
 \lim_{n\to\infty} z_n(\epsilon)^n=1-\epsilon.$$
Since for $\epsilon$ sufficiently small and $n$ sufficiently large we have $u_n(x,t)\geq w_n(0,\epsilon)\geq y_n(\epsilon)\geq z_n(\epsilon)$,
this implies
that for $\epsilon>0$ sufficiently small,
$$\liminf_{n\to\infty} u_n^n(x,t)\ge1-\epsilon,$$
which yields the desired conclusion.
\end{proof}

\begin{lem} \label{lem:uC1}
If the topological boundary $\partial U=\partial S$ between $U$ and $S$ has measure
zero,
then $x\mapsto u(x,t)$ is $C^1$ for every $t>0$, and $\partial_x u$ is continuous on $\R\times (0,\infty)$.
\end{lem}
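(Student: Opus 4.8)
\emph{Overview of the plan.} I would pass to the limit $n\to\infty$ in the representation~\eqref{unrep2} to obtain an explicit integral formula for $u$, differentiate this formula in $x$ using Lemma~\ref{lemsmooth} to deduce the $C^1$ property, and finally check joint continuity of the resulting formula for $\partial_x u$.

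\emph{Step 1: a formula for $u$.} Set $g:=\indic{(x,t)\in\mathrm{int}(U)}$. By Proposition~\ref{prop:limunn}, $u_n^n\to1$ on $\mathrm{int}(U)$; on $S$ we have $u_n\le u<1$, so $u_n^n\le u^n\to0$; and the remaining set $\partial U=\partial S$ is Lebesgue-null by hypothesis. Hence $u_n^n\to g$ a.e.\ on $\R\times(0,\infty)$. Since $0\le u_n^n\le1$, the integrand in~\eqref{unrep2}, viewed as a function of $(r,y)\in(0,t)\times\R$, is dominated by $e^t p_r(x-y)$ (integrable, with integral $te^t$), and the exceptional set $\{(r,y):(y,t-r)\in\partial U\}$ is null, so dominated convergence gives
\[ u(x,t)=e^t\, p_t\conv v(x)-\int_0^t\diffd r\, e^r\, p_r\conv g(x,t-r),\qquad x\in\R,\ t>0. \]

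\emph{Step 2: $u(\cdot,t)\in C^1(\R)$.} Differentiate this formula in $x$ using Lemma~\ref{lemsmooth}. Part~1 gives that $x\mapsto e^t p_t\conv v(x)$ is $C^\infty$ with derivative $e^t p_t'\conv v(x)$, as $v$ is bounded. For the second term, apply part~2 with $b(y,r):=e^r g(y,t-r)$: here $b_r:=\|b(\cdot,r)\|_\infty\le e^t$, so $r\mapsto b_r/\sqrt r$ is integrable on $[0,t]$, and the lemma gives that $x\mapsto\int_0^t\diffd r\, e^r p_r\conv g(x,t-r)$ is $C^1$ with derivative $\int_0^t\diffd r\, e^r p_r'\conv g(x,t-r)$. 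Hence $u(\cdot,t)\in C^1(\R)$ and
\[ \partial_x u(x,t)=e^t\, p_t'\conv v(x)-\int_0^t\diffd r\, e^r\, p_r'\conv g(x,t-r). \]
The first term equals $e^t\partial_x(p_t\conv v)(x)$, which is smooth on $\R\times(0,\infty)$ (it is classical that $p_t\conv v$, hence $\partial_x(p_t\conv v)$, is $C^\infty$ there), so in particular jointly continuous.

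\emph{Step 3: continuity of $\partial_x u$, and the main obstacle.} It remains to show that $F(x,t):=\int_0^t\diffd r\, e^r(p_r'\conv g(\cdot,t-r))(x)$ is continuous. Take $(x_k,t_k)\to(x_0,t_0)$ with $t_0>0$; we may assume $t_k,t_0\in[\tfrac12 t_0,2t_0]$, $|x_k-x_0|\le1$, and write $F(x_k,t_k)=\int_0^{2t_0}\diffd r\,\indic{r<t_k}\,e^r(p_r'\conv g(\cdot,t_k-r))(x_k)$, the integrand understood to vanish for $r\ge t_k$. It is dominated, uniformly in $k$, by $e^{2t_0}\|p_r'\|_{L^1}=e^{2t_0}/\sqrt{\pi r}$, integrable on $(0,2t_0)$. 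For a.e.\ $r$ the integrand converges to $\indic{r<t_0}\,e^r(p_r'\conv g(\cdot,t_0-r))(x_0)$: away from $r=t_0$ one has $\indic{r<t_k}\to\indic{r<t_0}$; and for a.e.\ $r<t_0$ the slice $\{y:(y,t_0-r)\in\partial U\}$ is Lebesgue-null (this is Fubini applied to the null set $\partial U$), off which $g$ is locally constant, so $g(y,t_k-r)\to g(y,t_0-r)$, and combined with $p_r'(x_k-y)\to p_r'(x_0-y)$ and the $y$-domination $\sup_{|z-x_0|\le1}|p_r'(z-y)|\in L^1(\diffd y)$, dominated convergence in $y$ yields $(p_r'\conv g(\cdot,t_k-r))(x_k)\to(p_r'\conv g(\cdot,t_0-r))(x_0)$. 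A final dominated convergence in $r$ gives $F(x_k,t_k)\to F(x_0,t_0)$, completing the proof. The delicate point throughout is Step~3: $g$ is merely locally constant off the null set $\partial U$ (not continuous), and one must simultaneously control this, the singularity of $p_r'$ at $r=0$, and the $t$-dependence of both the integration range and the time-argument of $g$; what makes it work is that $g$ is continuous at every point of $(\R\times(0,\infty))\setminus\partial U$, so Fubini converts $|\partial U|=0$ into the statement that for a.e.\ time $\tau$ the set $\{y:(y,\tau)\in\partial U\}$ is Lebesgue-null (the same hypothesis is also used, more easily, in Step~1, relying on Proposition~\ref{prop:limunn}).
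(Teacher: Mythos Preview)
Your proof is correct and follows essentially the same approach as the paper's. The only difference is cosmetic: the paper passes to the limit in the representation~\eqref{unrep}, obtaining $u_n-u_n^n\to u^*:=u\cdot\indic{S}$ a.e.\ and hence $u(x,t)=p_t\conv v(x)+\int_0^t\diffd r\,p_r\conv u^*(x,t-r)$, whereas you use~\eqref{unrep2} and $u_n^n\to g=\indic{\mathrm{int}(U)}$ a.e.; since $u^*=u-g$ a.e., the two formulas are equivalent. Your Step~3 is in fact more carefully justified than the paper's terse ``by dominated convergence''.
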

\begin{proof}
Let 
$$u^*(x,t):=\begin{cases}u(x,t)&\text{if $u(x,t)<1$},\\
	0&\text{if $u(x,t)=1$}.\end{cases}$$
Then we have almost everywhere
\begin{equation}\label{unu*}
u_n(x,t)-u_n^n(x,t)\to u^*(x,t).
\end{equation}
Indeed, this holds
in
$S$ (obviously) and in the interior of $U$ (by
Proposition~\ref{prop:limunn}), and therefore holds almost everywhere by hypothesis.
Hence by \eqref{unrep}, letting $n\to \infty$ and applying dominated convergence, for $t>0$,
\begin{equation} \label{eq:uu*}
u(x,t)=p_t\conv v(x) +\int_0^t \diffd r  \, p_r\conv u^*(x,t-r) .
\end{equation}
Applying Lemma \ref{lemsmooth}, we have that $u(\cdot,t)$ is $C^1$ with
$$\partial_x u(x,t)=p_t'\conv v(x)+\int_0^t  \diffd r\,
p_r'\conv u^*(x,t-r),
$$
and hence, by dominated convergence, $\partial _x u$ is continuous on $\R\times (0,\infty)$, as required.
\end{proof}

\section{Proof of Proposition~\ref{prop mu}} \label{sec:proof1}

In this section, we suppose that $v:\R\to [0,1]$ is a non-increasing function such that $v(x)\to 0$ as $x\to
\infty$ and $v(x)\to 1$ as $x\to-\infty$. 
Let
$\mu_0=\inf\{x\in \R:v(x)<1\}\in \{-\infty\}\cup \R$.
Let $u_n$ denote the solution of~\eqref{mainn}, and define $u$ as in~\eqref{mainu}.
For $t> 0$, let
$$
\mu_t=\inf\Big(\{x\in \R :u(x,t)<1\}\cup \{\infty\}\Big)\in \R\cup\{\infty,-\infty\}.
$$
Note that since $v$ is non-increasing, by the comparison principle we have that $x\mapsto u_n(x,t)$ is non-increasing for each $n$ and each $t\ge 0$, and therefore the same property holds for $u$.
Hence, since $u$ is continuous on $\R\times (0,\infty)$, we have that
for $t>0$
$$
 u(x,t)=1 \, \Leftrightarrow \, x\le \mu_t.
$$
We first prove
that $\mu_t\in \R$ for $t>0$ and bound the increments of $\mu$.
\begin{prop}
\label{propnottoomuchontheleft}
$\mu_t\in\R$ for any $t>0$. Furthermore,
there exists a non-negative
continuous increasing function $\epsilon\mapsto a_\epsilon$ with $a_0=0$
such that
for any $t> 0$ and any $\epsilon\ge0$, 
\begin{equation}
\mu_{t+\epsilon}-\mu_{t} \ge -a_\epsilon.
\label{nottoomuchontheleft}
\end{equation}
If $\mu_0\in\R$, the above also holds at $t=0$.
\end{prop}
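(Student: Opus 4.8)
The plan is to prove the three claims in turn — $\mu_t<\infty$, then $\mu_t>-\infty$, then the increment bound~\eqref{nottoomuchontheleft} — the central ingredient being a ``flattening'' estimate that is essentially already contained in the proof of Proposition~\ref{prop:limunn}. For $\mu_t<\infty$: discarding the non-negative integral term in~\eqref{unrep2} gives $u_n(x,t)\le e^tp_t\conv v(x)$ and hence, letting $n\to\infty$, $u(x,t)\le e^tp_t\conv v(x)$; since $0\le v\le1$ and $v(y)\to0$ as $y\to\infty$, dominated convergence gives $p_t\conv v(x)\to0$ as $x\to\infty$, so $u(x,t)<1$ for all large $x$ and $\mu_t<\infty$.

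The flattening estimate is as follows. With the exponent $0.49$ as in the proof of Proposition~\ref{prop:limunn}, that proof produces $\epsilon_0\in(0,\tfrac12)$ such that for every $\epsilon\in(0,\epsilon_0]$ the solution $w_n$ of~\eqref{w_n} with initial datum $(1-\tfrac\epsilon2)\indic{y\in[-\epsilon^{0.49},\epsilon^{0.49}]}$ satisfies $w_n(0,\epsilon)\ge z_n(\epsilon)$ for $n$ large, with $z_n(\epsilon)=1+\tfrac1n\log(1-\epsilon)+o(1/n)\to1$ (one does not need the finer statement $z_n(\epsilon)^n\to1-\epsilon$). From this I would deduce: \emph{(a)} if $\epsilon\le\epsilon_0$, $Y\in\R$, and $v\ge1-\tfrac\epsilon2$ on $[Y-2\epsilon^{0.49},Y]$, then $u(Y-\epsilon^{0.49},\epsilon)=1$; \emph{(b)} if $\epsilon\le\epsilon_0$, $\tau>0$, $Y\in\R$, and $u(\cdot,\tau)\equiv1$ on $[Y-2\epsilon^{0.49},Y]$, then $u(Y-\epsilon^{0.49},\tau+\epsilon)=1$. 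For (a): since $u_n(\cdot,0)=v\ge(1-\tfrac\epsilon2)\indic{[Y-2\epsilon^{0.49},Y]}$, the comparison principle gives $u_n(Y-\epsilon^{0.49}+z,s)\ge w_n(z,s)$ for all $z\in\R$, $s\ge0$; taking $z=0$, $s=\epsilon$ and letting $n\to\infty$ (using $z_n(\epsilon)\to1$ and $u\le1$) yields $u(Y-\epsilon^{0.49},\epsilon)=1$. For (b): $u_n(\cdot,\tau)$ is continuous and increases to $u(\cdot,\tau)\equiv1$ on the compact box, so Dini's theorem gives $u_n(\cdot,\tau)>1-\tfrac\epsilon2$ there for $n$ large; one then argues as in (a) with the comparison started at time $\tau$ (using that $u_n(\cdot,\tau+\cdot)$ solves~\eqref{mainn} with datum $u_n(\cdot,\tau)$).

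The bootstrap then runs as follows. Since $v(y)\to1$ as $y\to-\infty$, for each $\epsilon\in(0,\epsilon_0]$ there is $M_\epsilon\in\R$ with $v\ge1-\tfrac\epsilon2$ on $(-\infty,-M_\epsilon]$; applying (a) with $Y=-M_\epsilon$ gives $\mu_\epsilon\ge-M_\epsilon-\epsilon^{0.49}>-\infty$, so $\mu_\epsilon\in\R$ for all $\epsilon\in(0,\epsilon_0]$. Recalling that $u(\cdot,\tau)\equiv1$ on $(-\infty,\mu_\tau]$ whenever $\mu_\tau\in\R$ (for $\tau>0$), applying (b) with $Y=\mu_\tau$ gives the one-step bound $\mu_{\tau+\epsilon}\ge\mu_\tau-\epsilon^{0.49}$ — and in particular $\mu_{\tau+\epsilon}\in\R$ — whenever $\tau>0$, $\mu_\tau\in\R$, $\epsilon\in(0,\epsilon_0]$. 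Chaining this in steps of size $\epsilon_0$ from $t=\epsilon_0$, together with (a) again for $t\in(0,\epsilon_0)$, shows $\mu_t\in\R$ for all $t>0$.

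Finally, for~\eqref{nottoomuchontheleft}: given $t>0$ and $\epsilon>0$, set $N=\lceil\epsilon/\epsilon_0\rceil$ so $\epsilon/N\le\epsilon_0$; applying the one-step bound on each of the $N$ subintervals of $[t,t+\epsilon]$ of length $\epsilon/N$ (all of whose endpoints now carry a finite $\mu$) and summing gives $\mu_{t+\epsilon}-\mu_t\ge-N(\epsilon/N)^{0.49}=-N^{0.51}\epsilon^{0.49}$. As $N\le1+\epsilon/\epsilon_0$, one may take $a_\epsilon=C\,(1+\epsilon/\epsilon_0)^{0.51}\,\epsilon^{0.49}$ for a suitable absolute constant $C$, which is continuous and increasing with $a_0=0$; and if $\mu_0\in\R$ then $v\equiv1$ on $(-\infty,\mu_0)$, so (a) applies at $\tau=0$ with $Y=\mu_0-\epsilon^{0.49}$ and the same chopping argument gives~\eqref{nottoomuchontheleft} at $t=0$ as well. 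The only technically delicate point is the flattening estimate (a)--(b), which is however a near-verbatim transcription of the estimates already carried out in the proof of Proposition~\ref{prop:limunn}; the rest is bookkeeping, the one real subtlety being that the one-step bound must be chained in steps of the \emph{fixed} size $\epsilon_0$ rather than infinitesimally, which is exactly what dictates the above form of $a_\epsilon$.
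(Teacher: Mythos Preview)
Your proof is correct and follows a genuinely different route from the paper's. Both proofs handle $\mu_t<\infty$ the same way (via $u(x,t)\le e^tp_t\conv v(x)$). For the increment bound, however, the paper does not reuse Proposition~\ref{prop:limunn} at all: instead it introduces the \emph{linear} subsolution $\underline u(x,s)=e^s\E_x[\underline v(B_s)]$ with $\underline v=\eta\indic{[-a,a]}$, observes that $\underline u$ coincides with the construction \eqref{mainn}--\eqref{mainu} (for initial datum $\underline v$) up until the time $T$ when $\underline u$ first reaches~$1$, and then uses the comparison property of Theorem~\ref{thm u} to deduce $\underline u\le u$ on $[0,T]$. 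An explicit computation with the Gaussian density shows that for small $\epsilon$ one can take $a=a_\epsilon=\epsilon^{1/3}$ and find $\eta\in(0,1)$ so that $\underline u(0,\cdot)$ first reaches~$1$ exactly at time $\epsilon$; translating $\underline v$ to sit under $u(\cdot,t)$ then yields $\mu_{t+\epsilon}\ge\mu_t-\epsilon^{1/3}$.

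In short: the paper compares $u$ with an explicit solution of the linear equation $\partial_s\underline u=\partial_x^2\underline u+\underline u$ and does a direct Gaussian calculation, whereas you compare $u_n$ with the nonlinear solution $w_n$ and invoke the $w_n\ge y_n\ge z_n$ ODE chain already built in the proof of Proposition~\ref{prop:limunn}. Your approach is economical in that it recycles existing estimates and even gives a slightly better exponent ($0.49$ versus $1/3$) in the one-step bound; the paper's approach is more self-contained and conceptually lighter, since the linear comparison avoids Dini's theorem, the ODE chain, and the somewhat delicate second-derivative estimate for $w_n$. One small wrinkle in your write-up: at $t=0$ with $\mu_0\in\R$, the choice $Y=\mu_0-\epsilon^{0.49}$ gives $\mu_\epsilon\ge\mu_0-2\epsilon^{0.49}$ rather than $\mu_0-\epsilon^{0.49}$, but this is harmless after absorbing the factor into~$C$ (or one lets $Y\nearrow\mu_0$).
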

\begin{proof}
By~\eqref{eq:FKt}, we have that for $x\in \R$ and $t>0$, 
\begin{equation} \label{eq:uto0}
u(x,t)\leq e^t \E_0\left[v(B_t+x) \right],
\end{equation}
and so, by dominated convergence, $u(x,t)\to 0$ as $x\to \infty$. Hence
$\mu_t<\infty$ $\forall t>0$. 

We now turn to showing that~\eqref{nottoomuchontheleft} holds if $\mu_t\in \R$; we shall then use this to
show that $\mu_t>-\infty$ for $t>0$. 
Take $\underline{v}:\R\to \R$ measurable with $0\le \underline v\le v$,
and, for $x\in \R$ and $t \geq 0$,
let 
\begin{equation}
\label{exlemeq}
\underline u(x,t)=e^t\E_x\big[\underline v(B_t)\big].
\end{equation}
Let $T=\sup\{t\geq 0: \underline u(x,t)<1\ \forall x\in \R\}$; (we call $T$ the time
at which $\underline u$ hits 1).
Then
\begin{equation}\label{exlem}
\underline u(x,t)\le u(x,t)\qquad\forall x\in\R,\, t\le T.
\end{equation}
Indeed, note that $\underline u(x,t)$ is the unique bounded solution to $\partial_t\underline
u = \partial_x^2 \underline u + \underline u$ with initial condition
$\underline v$. By Theorem~\ref{thm u}, for $t<T$, $\underline u(\cdot,t)$ is equal to
the solution of
\eqref{mainn} and \eqref{mainu} with $\underline v$ as initial condition.
Again by Theorem~\ref{thm u}, it follows that $\underline u(\cdot,t)\leq u(\cdot,t)$ for $t<T$. By continuity, we now have~\eqref{exlem}.

Now fix $\epsilon>0$.
Let $\underline
v(x)=\eta \indic{x\in[-a,a]}$ for some fixed $\eta \in (0,1)$ and
$a>0$ to be chosen later. For this choice of $\underline v$, define $\underline u(x,s)$
as in \eqref{exlemeq}. For $\epsilon$ sufficiently small, the pair $(\eta,a)$ can be chosen in such a way
that $\underline u$ hits 1 at time $\epsilon$ (we shall explain below how this is
done); by symmetry, the position where
$\underline u$ hits 1 is $x=0$.

Fix $t\ge0$ such that $\mu_t\in\R$.  Our definition of
$\underline v$ ensures that
$\underline v (x-\mu_t+a) \le u(x,t)$ for all $x\in \R$. Then, by
\eqref{exlem} and the semigroup property in Theorem~\ref{thm u}, $\underline u (x-\mu_t+a,\epsilon)\le
u(x,t+\epsilon)$. In particular, $1=\underline u(0,\epsilon)\le
u(\mu_t-a,t+\epsilon)$ and so $\mu_{t+\epsilon}\ge \mu_t-a$.

We now complete the proof of~\eqref{nottoomuchontheleft} by showing that it is possible, when $\epsilon$
is sufficiently small, to choose
$a=a_\epsilon:=\epsilon^{1/3}$ and to find $\eta\in(0,1)$ such that $\underline u$ hits 1 at time $\epsilon$,
as required.

Introduce 
$$f(s)=\underline u(0,s) = \eta e^s \P_0\left(|B_s|<a_\epsilon \right)=
\eta e^s \int_{-a_\epsilon}^{a_\epsilon} \frac{\diffd y}{\sqrt{4\pi s}}e^{-\frac{y^2}{4s}}.
$$
Note that
$$
\P _0\left(B_\epsilon \geq a_\epsilon \right)=
\int_{a_\epsilon}^\infty \frac{\diffd y}{\sqrt{4\pi \epsilon}}e^{-\frac{y^2}{4\epsilon}}=
\int_{a_\epsilon/\sqrt \epsilon}^\infty \frac{\diffd y}{\sqrt{4\pi}}e^{-y^2/4}
\leq e^{-\epsilon^{-1/3}/4}.
$$
Hence for $\epsilon$ sufficiently small,
$$
e^\epsilon \int_{-a_\epsilon}^{a_\epsilon} \frac{\diffd y}{\sqrt{4\pi \epsilon}}e^{-\frac{y^2}{4\epsilon}}>1,
$$
and we can find $\eta<1$ such that
\begin{equation*}
\underline u (0,\epsilon)=f(\epsilon)=
\eta e^\epsilon \int_{-a_\epsilon}^{a_\epsilon} \frac{\diffd y}{\sqrt{4\pi \epsilon}}e^{-\frac{y^2}{4\epsilon}}=1.
\end{equation*}
It only remains to show that $f(s)<1$ for $s<\epsilon$. To do this, we
simply show that $f'(s)\ge0$ for $s<\epsilon$. We have
\begin{equation}\label{f'(s)}
f'(s)=
\eta e^s \left(
\int_{-a_\epsilon}^{a_\epsilon} \frac{\diffd y}{\sqrt{4\pi s}}e^{-\frac{y^2}{4s}}
-\frac{a_\epsilon}{s}\frac{1}{\sqrt{4\pi s}}e^{-\frac{a_\epsilon^2}{4s}}
\right).
\end{equation}
Clearly, for $\epsilon$ sufficiently small and $s\le\epsilon$, the first term
in the parenthesis of \eqref{f'(s)} is arbitrarily close to 1 while the
second term is arbitrarily close to 0. Hence,  $f'(s)>0$ $\forall s\leq
\epsilon$, which concludes the proof of~\eqref{nottoomuchontheleft}.

Finally, we can now show that in fact $\mu_t>-\infty$ for $t>0$.
Indeed, let $\underline v(x)=\eta\indic{x\in[-a,a]}$
where $a>0$ and $\eta\in(0,1)$ are such that $\underline u$ hits 1 at some time
$s\leq t$. (By the above argument, such a pair $(\eta,a)$ can always be found. By
symmetry, the position where $\underline u$ hits 1 is $x=0$.) Now choose
$x_0$ such that $\underline v(x-x_0)\le v(x)$ $\forall x\in \R$ (this is always
possible as we assumed $v(x)\to 1$ as $x\to-\infty$). Then
by~\eqref{exlem} we have
$\underline u(x-x_0,s)\le u(x,s)$ $\forall x\in \R$ and, in
particular, $1\le u(x_0,s)$, which implies that $\mu_s\ge x_0$.
We now have that $\mu_s \in \R$ for some $s\leq t$ and therefore, by~\eqref{nottoomuchontheleft}, $\mu_t>-\infty$.
\end{proof}

\begin{prop} \label{prop:lag}
The following left-limit  exists for every $t>0$ and satisfies: (l\`ag)
$$\lim_{\epsilon\searrow0}\mu_{t-\epsilon}\le\mu_t.$$
\end{prop}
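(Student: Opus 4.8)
The plan is to exploit monotonicity of $t \mapsto \mu_t$ "from below": I will show that for a fixed $x_0 \in \R$, the set of times $t$ at which $u(x_0, t) = 1$ (equivalently $x_0 \le \mu_t$) is "closed from the left" in a suitable sense, so that $\mu_{t-\epsilon}$ cannot jump up past $\mu_t$ as $\epsilon \searrow 0$. More precisely, fix $t>0$ and set $\ell := \limsup_{\epsilon \searrow 0}\mu_{t-\epsilon}$ and $m := \liminf_{\epsilon \searrow 0}\mu_{t-\epsilon}$; by Proposition~\ref{propnottoomuchontheleft} (the lower bound $\mu_{t-\epsilon} \ge \mu_t - a_\epsilon$ with $a_\epsilon \to 0$) we have $m \ge \mu_t$ is false in general — rather we get $m \ge \mu_t$ only as a lower bound on the liminf, wait: that inequality gives $\liminf_{\epsilon\searrow 0}\mu_{t-\epsilon} \ge \mu_t$. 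So the real content is the reverse: I must show $\limsup_{\epsilon \searrow 0}\mu_{t-\epsilon} \le \mu_t$, and once that is combined with Proposition~\ref{propnottoomuchontheleft} the limit exists and equals $\mu_t$... except the proposition only claims $\le \mu_t$, which suggests that in fact the left-limit need not equal $\mu_t$ and the only inequality we can push through from continuity of $u$ is $\limsup_{\epsilon\searrow 0}\mu_{t-\epsilon}\le \mu_t$. I would therefore focus solely on proving this upper bound.

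The key step is a continuity/closedness argument. Take any $x_0 < \limsup_{\epsilon \searrow 0}\mu_{t-\epsilon}$; then there is a sequence $\epsilon_k \searrow 0$ with $x_0 < \mu_{t-\epsilon_k}$ for all $k$, hence $u(x_0, t-\epsilon_k) = 1$ for all $k$ (using that $u(\cdot,s)$ is non-increasing and $u(y,s)=1 \Leftrightarrow y \le \mu_s$, established at the start of Section~\ref{sec:proof1}). Since $u$ is continuous on $\R \times (0,\infty)$ by Lemma~\ref{lem:cont t}, letting $k \to \infty$ gives $u(x_0, t) = 1$, i.e. $x_0 \le \mu_t$. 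As $x_0$ was an arbitrary value strictly below $\limsup_{\epsilon\searrow 0}\mu_{t-\epsilon}$, we conclude $\limsup_{\epsilon\searrow 0}\mu_{t-\epsilon} \le \mu_t$. Combined with $\liminf_{\epsilon\searrow 0}\mu_{t-\epsilon} \ge \mu_t$ from Proposition~\ref{propnottoomuchontheleft}, the left-limit exists and in fact equals $\mu_t$; in particular the stated inequality holds.

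The one point that needs a little care — and is the only place I expect any friction — is handling the possibility that $\limsup_{\epsilon\searrow 0}\mu_{t-\epsilon} = +\infty$ a priori, or that some $\mu_{t-\epsilon_k}$ are not finite. But Proposition~\ref{propnottoomuchontheleft} already guarantees $\mu_s \in \R$ for all $s>0$, so for $\epsilon$ small enough $\mu_{t-\epsilon}$ is a genuine real number; and the upper bound argument above shows any finite $x_0$ below the limsup satisfies $x_0 \le \mu_t < \infty$, ruling out $\limsup = +\infty$. So there is essentially no obstacle here: the proof is a short application of the continuity of $u$ together with the already-established monotonicity in $x$ and the lower-increment bound of Proposition~\ref{propnottoomuchontheleft}. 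I would write it in three or four lines.
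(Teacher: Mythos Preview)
Your continuity argument is fine and does prove $\limsup_{\epsilon\searrow 0}\mu_{t-\epsilon}\le\mu_t$. The problem is the other half: you misread the direction of Proposition~\ref{propnottoomuchontheleft}. That proposition says $\mu_{t'+\epsilon}-\mu_{t'}\ge -a_\epsilon$. Applying it with $t'=t-\epsilon$ gives $\mu_t-\mu_{t-\epsilon}\ge -a_\epsilon$, i.e.\ $\mu_{t-\epsilon}\le\mu_t+a_\epsilon$. This is an \emph{upper} bound on $\mu_{t-\epsilon}$ and yields $\limsup_{\epsilon\searrow 0}\mu_{t-\epsilon}\le\mu_t$ --- the same inequality your continuity argument already gave, not the lower bound $\liminf\ge\mu_t$ you claim. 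So you have proved the bound $\limsup\le\mu_t$ twice and the existence of the left limit zero times.

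What is genuinely missing is ruling out oscillation: nothing you have written prevents $\liminf_{\epsilon\searrow 0}\mu_{t-\epsilon}<\limsup_{\epsilon\searrow 0}\mu_{t-\epsilon}$. The paper handles this by a short contradiction argument using the increment bound of Proposition~\ref{propnottoomuchontheleft} between two nearby times \emph{both strictly less than $t$}: if $\liminf<b<c<\limsup$, one can find $t-\epsilon'<t-\epsilon''<t$ with $\mu_{t-\epsilon'}>c$ and $\mu_{t-\epsilon''}<b$, so $\mu_{t-\epsilon''}-\mu_{t-\epsilon'}<b-c<0$; but the increment bound forces $\mu_{t-\epsilon''}-\mu_{t-\epsilon'}\ge -a_{\epsilon'-\epsilon''}$, which is a contradiction once $\epsilon'$ is small. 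You need this (or an equivalent) step. Note also that at this stage the paper only concludes $\lim\le\mu_t$, not equality; left-continuity is established later and requires additional work.
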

\begin{proof}
Suppose that the left limit $\lim_{\epsilon\searrow0}\mu_{t-\epsilon}$ does not exist for some $t>0$,
and choose $b$ and $c$ such that
$$\liminf_{\epsilon\searrow0}\mu_{t-\epsilon}<b<c<
\limsup_{\epsilon\searrow0}\mu_{t-\epsilon}.$$
Then for any $\epsilon>0$, there exists $\epsilon'\in (0,\epsilon)$ such that
$\mu_{t-\epsilon'}>c$. 
There also exists
$\epsilon''\in (0,\epsilon')$
such that $\mu_{t-\epsilon''}<b$, so that
$\mu_{t-\epsilon''}-\mu_{t-\epsilon'}<b-c$.
However, by Proposition~\ref{propnottoomuchontheleft},
$\mu_{t-\epsilon''}-\mu_{t-\epsilon'}\geq -a_\epsilon$,
which is a contradiction if $\epsilon$ is sufficiently small that $a_\epsilon<c-b$.
Hence the left limit $\lim_{\epsilon\searrow0}\mu_{t-\epsilon}$ exists. 
By
Proposition~\ref{propnottoomuchontheleft} again,
$\mu_{t-\epsilon}\leq \mu_t +a_\epsilon\to \mu_t$ as $\epsilon\to 0$, and so $\lim_{\epsilon\searrow0}\mu_{t-\epsilon}\le\mu_t.$
\end{proof}
\begin{prop} \label{prop:cad}
The
 map $t\mapsto\mu_t$ is right-continuous  (c\`ad and hence c\`adl\`ag), i.e.~for every $t\geq 0$,
$$\lim_{\epsilon\searrow0} \mu_{t+\epsilon}=\mu_t.$$
\end{prop}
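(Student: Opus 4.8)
\emph{Proof plan.} The plan is to prove the two inequalities $\liminf_{\epsilon\searrow0}\mu_{t+\epsilon}\ge\mu_t$ and $\limsup_{\epsilon\searrow0}\mu_{t+\epsilon}\le\mu_t$ and then combine them. The first is essentially free: Proposition~\ref{propnottoomuchontheleft} gives $\mu_{t+\epsilon}-\mu_t\ge-a_\epsilon$ for all $\epsilon\ge0$ when $t>0$ (and also at $t=0$ when $\mu_0\in\R$; when $\mu_0=-\infty$ the bound $\liminf_{\epsilon\searrow0}\mu_{t+\epsilon}\ge-\infty$ is trivial), and since $a_\epsilon\to0$ as $\epsilon\searrow0$, taking the liminf gives the claim.

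For the second inequality I would argue by contradiction. If $\limsup_{\epsilon\searrow0}\mu_{t+\epsilon}>\mu_t$, then, since $\mu_t<\infty$ (for $t>0$ by Proposition~\ref{propnottoomuchontheleft}, and for $t=0$ because $v\not\equiv1$ under our standing hypotheses), I can fix a finite $c$ with $\mu_t<c<\limsup_{\epsilon\searrow0}\mu_{t+\epsilon}$ together with a sequence $\epsilon_n\searrow0$ such that $\mu_{t+\epsilon_n}>c$ for every $n$. Since for $s>0$ the map $x\mapsto u(x,s)$ is non-increasing with $u(x,s)=1\Leftrightarrow x\le\mu_s$, this forces $u(c,t+\epsilon_n)=1$ for all $n$, and the goal is to reach a contradiction as $n\to\infty$. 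If $t>0$, the continuity of $u$ on $\R\times(0,\infty)$ (Lemma~\ref{lem:cont t}) gives $u(c,t)=\lim_n u(c,t+\epsilon_n)=1$, so $c\le\mu_t$, contradicting $c>\mu_t$. If $t=0$, one cannot pass to the pointwise limit of $u(\cdot,\epsilon)$ directly, so I would instead pick a continuity point $c'$ of the monotone function $v$ inside the nonempty interval $(\mu_0,c)$; then $c'>\mu_0=\inf\{x:v(x)<1\}$ together with monotonicity of $v$ gives $v(c')<1$, while $c'<c<\mu_{\epsilon_n}$ gives $u(c',\epsilon_n)=1$, contradicting $u(c',\epsilon_n)\to v(c')$ from Lemma~\ref{lem:cont t}. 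Combining the two inequalities yields $\lim_{\epsilon\searrow0}\mu_{t+\epsilon}=\mu_t$.

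I do not expect a genuine obstacle here: once the continuity of $u$ (Lemma~\ref{lem:cont t}) and the one-sided increment bound (Proposition~\ref{propnottoomuchontheleft}) are in hand, right-continuity falls out quickly. The only point that needs a little care is the $t=0$ case, where only $L^1_\text{loc}$ convergence of $u(\cdot,\epsilon)$ to $v$ is available in general, so one must route the contradiction through a continuity point of $v$ just to the right of $\mu_0$ (equivalently, test the $L^1_\text{loc}$ convergence against the indicator of a small interval to the right of $\mu_0$ and use that $v$ is non-increasing to force $v\equiv1$ on $(-\infty,c)$, contradicting $c>\mu_0$).
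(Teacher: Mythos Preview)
Your proof is correct and follows essentially the same approach as the paper: the lower bound on $\liminf$ comes straight from Proposition~\ref{propnottoomuchontheleft}, and the upper bound on $\limsup$ uses continuity of $u$ for $t>0$ and the convergence $u(\cdot,\epsilon)\to v$ at $t=0$. The only cosmetic differences are that the paper argues directly rather than by contradiction, and at $t=0$ it routes through $L^1_\text{loc}$ convergence plus monotonicity of $u(\cdot,\epsilon)$, whereas you (equivalently, and perhaps more cleanly) invoke the pointwise convergence at continuity points of $v$ that Lemma~\ref{lem:cont t} also provides.
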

\begin{proof}
Proposition~\ref{propnottoomuchontheleft} already implies that for $t\geq 0$,
$\liminf_{\epsilon\searrow0}\mu_{t+\epsilon}\geq \mu_t$. It now remains to
prove that for any $t\geq 0$,
$\limsup_{\epsilon\searrow0} \mu_{t+\epsilon}\le\mu_t .$
Indeed, fix $t>0$ (we shall consider the case $t=0$ separately). 
For $z>0$, by the definition of $\mu_t$, we have $u(\mu_t+z,t)<1$. 
Then since $u$ is continuous on $\R\times (0,\infty)$,
$u(\mu_t+z,t+\epsilon)<1$ for $\epsilon$ sufficiently small, and so
$\mu_{t+\epsilon}\leq \mu_t+z$.
Hence $\limsup_{\epsilon\searrow0} \mu_{t+\epsilon}\le\mu_t+z$, and the result follows since $z>0$ was arbitrary.

It remains to consider the case $t=0$. First suppose $\mu_0\in\R$ and
take $z>0$.
Since $v$ is non-increasing, 
we have that $v(y)\leq v(\mu_0+z/2)<1$ $\forall y\ge \mu_0+z/2$.
Since $u(\cdot,\epsilon)\to v$ in $L^1_\text{loc}$ as $\epsilon \searrow 0$, and $u(\cdot,\epsilon)$ is non-increasing for $\epsilon>0$, it follows that $u(\mu_0+z,\epsilon)<1$ for $\epsilon$ sufficiently small, and so $\mu_\epsilon <\mu_0 +z$.
Hence for any $z>0$, $\limsup_{\epsilon\searrow0} \mu_{\epsilon}\le\mu_0+z$.
By the same argument, if $\mu_0=-\infty$ then, for any
$z\in\R$,  $u(z,\epsilon)<1$ for $\epsilon$ small enough. Therefore
$\mu_\epsilon<z$ and so for any $z\in \R$, $\limsup_{\epsilon\searrow0}\mu_\epsilon<z$.
\end{proof}

We can finally complete the following important step:
\begin{prop}
The map $t\mapsto\mu_t$ is continuous on $[0,\infty)$.
\end{prop}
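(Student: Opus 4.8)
The plan is to rule out upward jumps of $t\mapsto\mu_t$: by Proposition~\ref{prop:cad} the map is right-continuous on $[0,\infty)$, and by Proposition~\ref{prop:lag} the left limit $\mu_{t^-}:=\lim_{\epsilon\searrow0}\mu_{t-\epsilon}$ exists with $\mu_{t^-}\le\mu_t$ for $t>0$, so it is enough to show that there is no $t^*>0$ with $\mu_{(t^*)^-}<\mu_{t^*}$; combined with right-continuity at $0$ this gives continuity on $[0,\infty)$. I would first record that $\mu$ is then c\`adl\`ag, hence has at most countably many discontinuities, and that $\partial U=\partial S\subseteq\{(x,t):\mu_{t^-}\le x\le\mu_t\}$ (any point of $\partial U$ has $x\le\mu_t$ and is a limit of points of $S$, which forces $x\ge\min(\mu_{t^-},\mu_t)=\mu_{t^-}$). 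By Tonelli this set, hence $\partial U$, has two-dimensional measure $\int_{(0,\infty)}(\mu_t-\mu_{t^-})\,\diffd t=0$, so Lemma~\ref{lem:uC1} applies: $u(\cdot,t)\in C^1(\R)$ and $\partial_x u$ is continuous on $\R\times(0,\infty)$, and since $u\equiv1$ on $(-\infty,\mu_t]$ we get the free-boundary relation $\partial_x u(\mu_t,t)=0$.

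Next I would set $q:=-\partial_x u\ge0$. On $S$ the function $u$ solves the constant-coefficient linear equation $\partial_t u=\partial_x^2u+u$ (Proposition~\ref{prop:pde}), so by interior parabolic regularity $u$, and hence $q$, is $C^\infty$ on $S$ and $q$ solves $\partial_t q=\partial_x^2q+q$ there; since $q\ge0$ this means $q$ is a nonnegative supersolution of the heat equation on $S$. Moreover $q\not\equiv0$ on $S$ because $u(\cdot,s)$ equals $1$ at $\mu_s$ and tends to $0$ at $+\infty$, hence is non-constant. I claim $q>0$ throughout $S$: if $q$ vanished at an interior point, the strong minimum principle would force $q\equiv0$ on an open subset of $S$, hence (by analyticity in $x$ of solutions of $\partial_t q=\partial_x^2q+q$) on an entire spatial slice $\{x>\mu_\tau\}$, making $u(\cdot,\tau)$ constant — contradicting $u(\mu_\tau,\tau)=1\ne\lim_{x\to\infty}u(x,\tau)=0$.

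Now assume for contradiction that $\mu$ jumps at $t^*>0$. Put $\ell:=\mu_{(t^*)^-}<\mu_{t^*}$, $L:=\mu_{t^*}-\ell>0$, fix $\eta\in(0,L/4)$, let $I:=(\ell+2\eta,\mu_{t^*}-2\eta)$ and $m:=\tfrac12(\ell+\mu_{t^*})\in I$, and note $m<\mu_{t^*}$. Since $\mu_s\to\ell$ as $s\nearrow t^*$, choose $\delta_0>0$ with $\mu_s<\ell+\eta$ for all $s\in[t^*-\delta_0,t^*)$; then $\overline I\times[t^*-\delta_0,t^*)\subset S$ and $c_0:=\min_{x\in\overline I}q(x,t^*-\delta_0)>0$ by the previous paragraph. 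On the one hand, continuity of $\partial_x u$ together with $\partial_x u(m,t^*)=0$ (valid because $m<\mu_{t^*}$, so $u\equiv1$ near $(m,t^*)$) gives $q(m,s)\to0$ as $s\nearrow t^*$. On the other hand, $q$ is $C^{2,1}$ and bounded (Lemma~\ref{lem:uchi}) on a neighbourhood of $\overline I\times[t^*-\delta_0,s]\subset S$, so Proposition~\ref{lem:FKw}(condition~1), with $K\equiv1$ and no source, yields for $s\in(t^*-\delta_0,t^*)$
\[q(m,s)=\E_m\!\left[q(B_\tau,s-\tau)\,e^{\tau}\right],\qquad \tau:=\big(\inf\{r\ge0:B_r\notin I\}\big)\wedge\big(s-(t^*-\delta_0)\big).\]
Keeping only the event that $B$ has not left $I$ by time $s-(t^*-\delta_0)$ — on which $\tau=s-(t^*-\delta_0)$, $B_\tau\in\overline I$, $s-\tau=t^*-\delta_0$, and $e^\tau\ge1$ — and using $q\ge0$ gives
\[q(m,s)\ \ge\ c_0\,\P_m\big(B\text{ stays in }I\text{ up to time }s-(t^*-\delta_0)\big)\ \ge\ c_0\,\P_m\big(B\text{ stays in }I\text{ up to time }\delta_0\big)=:c_0c_1>0,\]
since $s-(t^*-\delta_0)<\delta_0$ and a Brownian motion started at the midpoint $m$ of $I$ stays in $I$ for time $\delta_0$ with positive probability. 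Letting $s\nearrow t^*$ gives $0\ge c_0c_1>0$, a contradiction. Hence $\mu$ has no jumps and is continuous on $(0,\infty)$, and on $[0,\infty)$ by Proposition~\ref{prop:cad}.

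The step I expect to be most delicate is the strict positivity $q>0$ on $S$: one must invoke the strong maximum principle and the spatial analyticity of solutions on the set $S$, which a priori is only open with a c\`adl\`ag lower boundary, so some care is needed. An alternative that avoids both Lemma~\ref{lem:uC1} and this positivity issue for establishing $q(m,s)\to0$ is to prove it by hand: from $u(x,t^*)\le e^{h}\E_x[u(B_h,t^*-h)]$ and the monotonicity bound $\E_x[u(B_h,\cdot)]\le\tfrac12+\tfrac12u(x,\cdot)$ one gets the crude estimate $1-u(x,\sigma)\le2(t^*-\sigma)$ for all $x\le\mu_{t^*}$ and $\sigma$ near $t^*$, and feeding this into interior gradient estimates for $\partial_t u=\partial_x^2u+u$ on cylinders about $(m,s)$ of radius $\sqrt{t^*-s}\subset S$ yields $|\partial_x u(m,s)|=O(\sqrt{t^*-s})\to0$; the rest of the argument is unchanged provided one still has $q>0$ on the single slice $t=t^*-\delta_0$.
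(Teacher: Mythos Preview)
Your proof is correct and takes a genuinely different route from the paper's. The paper argues directly: assuming a jump $\mu_{t^-}=a<b=\mu_t$, it fixes $c\in(a,b)$, observes that $u$ solves $\partial_tu=\partial_x^2u+u$ on $(c,\infty)\times(t_0,t)$ with boundary data $f(s)=u(c,s)$, identifies $u$ with the unique solution $\tilde u$ of this half-line boundary value problem, and then invokes spatial analyticity of $\tilde u(\cdot,t)$ (citing Cannon's book) to obtain a contradiction from $\tilde u\equiv1$ on $[c,b]$ together with $\tilde u(x,t)\to0$ as $x\to\infty$. Your argument instead first observes that the c\`adl\`ag property alone already forces $\partial U$ to have measure zero (since a c\`adl\`ag function has at most countably many jumps, so $\int(\mu_t-\mu_{t^-})\,\diffd t=0$), which lets you invoke Lemma~\ref{lem:uC1} \emph{before} knowing continuity of $\mu$; with continuity of $\partial_xu$ in hand, you then derive the contradiction via a Feynman-Kac lower bound for $q=-\partial_xu$ on the approaching slices.

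What each buys: the paper's argument is shorter and uses analyticity only once, at the endpoint time $t$, but relies on a boundary-value uniqueness/regularity statement external to the paper. Your approach uses only tools already assembled (Lemma~\ref{lem:uC1}, Proposition~\ref{lem:FKw}), and the early application of Lemma~\ref{lem:uC1} under the mere c\`adl\`ag hypothesis is a nice structural observation; the price is the auxiliary step establishing $q>0$ on $S$, which itself leans on the strong minimum principle and spatial analyticity --- standard, but comparable in weight to what the paper imports. Your self-identified delicate point (strict positivity of $q$) is fine as written: a small cylinder in $S$ plus the parabolic strong minimum principle gives $q\equiv0$ on an open set, and analytic continuation along the spatial slice yields the contradiction with $u(x,\tau)\to0$.
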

\begin{proof}
By Propositions~\ref{prop:cad} and~\ref{prop:lag},
we already have that $t \mapsto \mu_t$ is c\`adl\`ag, and that for $t>0$, 
$\lim_{\epsilon\searrow0}\mu_{t-\epsilon}\le\mu_t.$  Thus the only way in which $\mu$ could fail to be continuous would be if $\lim_{\epsilon\searrow0}\mu_{t-\epsilon} < \mu_t$ for some $t>0$. 
Suppose, for some $t>0$, that
$\lim_{\epsilon\searrow0}\mu_{t-\epsilon}=a  < b=\mu_t$, and take $c\in (a,b).$ Define $f(s)=u(c,s)$ and observe that $ f$ is continuous on $(0,\infty )$.

Since $\lim_{\epsilon\searrow0}\mu_{t-\epsilon}=a $, we have $f(t-\epsilon)<1$ for all $\epsilon >0$ sufficiently small, but since $\mu_t=b$, we have $\lim_{s\to t} f(s)=f(t)=1.$
Fix $t_0 \in (0,t)$ such that $f(s)<1$ $\forall s\in [t_0,t)$, and define $(\tilde u(x,s) , x\in \R, s\ge t_0)$ as the solution of the boundary value problem
\begin{equation}\label{BVP}
\begin{cases}
\partial_t \tilde u =\partial^2_x  \tilde u +\tilde u \qquad &\text{for } x> c \text{ and } s>t_0, \\
\tilde u(c,s)=f(s) \qquad &\text{for } s>t_0, \\
\tilde u(x,t_0)= u(x,t_0) \qquad &\text{for } x\in \R.
\end{cases}
\end{equation}
By Theorem~\ref{thm u}, and since $u(x,s)<1$ for $s\in [t_0,t)$ and $x>c$, we have that $\partial_t u=\partial^2_x u +u$ for $x>c$ and $s\in (t_0,t)$.
Since the solution of the boundary value problem~\eqref{BVP} is unique it
follows that for all $s \in[t_0,t)$ and $ x\ge c$ we have $\tilde u(x,s)
=u(x,s) $. By taking $s\nearrow t$ we also have, by continuity, $\tilde
u(x,t)=u(x,t)$ for $x\ge c$. But since $\mu_t=b$, we must 
have $\tilde u( x,t)=u(x,t)=1$ $\forall x\in [c,b]$. Furthermore, $\lim_{x\to \infty} \tilde u(x,t)=\lim_{x\to \infty} u(x,t) =0$. 
This is impossible because for each $s>t_0$, the solution $\tilde u(\cdot,s)$ of the boundary value problem is analytic (see Theorem 10.4.1 in \cite{Cannon1984}). 
\end{proof}

The proof of Proposition~\ref{prop mu} is now essentially complete. The map
$t \mapsto \mu_t$ is continuous on $[0,\infty )$, whether $\mu_0$ is finite
or $-\infty$. 
Therefore, defining $U$ and $S$ as in~\eqref{eq:UandS}, we see that the
topological boundary between these two domains is simply $\partial
U=\partial S= \{(\mu_t,t):t>0\}$. It has measure zero, and hence by
Lemma~\ref{lem:uC1}, $u(\cdot,t)$ is $C^1$ for every $t>0$ and $\partial_x
u $ is continuous on $\R\times(0,\infty)$.

\section{Proof of uniqueness} \label{sec:uniq}

In this section we prove that the classical solution to~\eqref{FBP} is unique.
We start with the following very simple lemma.
\begin{lem}
If $(u,\mu)$ is a classical solution of~\eqref{FBP}, then for $t>0$,
$$\mu_t=\inf\{x\in \R:u(x,t)<1\}.$$
\end{lem}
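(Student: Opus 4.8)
The statement is the conjunction of two inequalities, and the plan is to prove each in turn; essentially all of the (very modest) content sits in the second one, which I would argue by contradiction using the equation itself.

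\emph{Step 1: $\mu_t\le\inf\{x:u(x,t)<1\}$.} This is immediate from the defining properties: by the second line of~\eqref{FBP}, $u(x,t)=1$ for every $x\le\mu_t$, so $\{x:u(x,t)<1\}\subseteq(\mu_t,\infty)$.

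\emph{Step 2: $\inf\{x:u(x,t)<1\}\le\mu_t$.} Suppose not, so that $m:=\inf\{x:u(x,t)<1\}>\mu_t$ (possibly $m=+\infty$ if the set is empty). Since $u\le1$ everywhere and, by definition of $m$, $u(x,t)\ge1$ for all $x<m$, it follows that $u(\cdot,t)\equiv1$ on the non-empty open interval $(\mu_t,m)$ (if $m=+\infty$, replace this interval by, say, $(\mu_t,\mu_t+1)$). Fix $x^{*}$ inside this interval. Since $x^{*}>\mu_t$ and $s\mapsto\mu_s$ is continuous, there is $\delta\in(0,t)$ with $x^{*}>\mu_s$ for every $s\in[t-\delta,t+\delta]$; hence $(x^{*},t)$ has a space-time neighbourhood on which $u$ is $C^{2,1}$, solves $\partial_t u=\partial_x^2 u+u$, and satisfies $u\le1$. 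Because $y\mapsto u(y,t)$ is constant equal to $1$ near $x^{*}$ we have $\partial_x^2 u(x^{*},t)=0$, so the equation gives $\partial_t u(x^{*},t)=\partial_x^2 u(x^{*},t)+u(x^{*},t)=1$. By continuity of $\partial_t u$ near $(x^{*},t)$ there is $\delta'\in(0,\delta)$ with $\partial_t u(x^{*},s)\ge\tfrac12$ for $s\in[t,t+\delta']$, whence
\[
u(x^{*},t+\delta')=1+\int_t^{t+\delta'}\partial_t u(x^{*},s)\,\diffd s\ge 1+\tfrac12\delta'>1,
\]
contradicting $u\le1$. Therefore $m\le\mu_t$, and combined with Step 1 this gives $\mu_t=\inf\{x:u(x,t)<1\}$.

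The only subtlety, and it is minor, is ensuring that perturbing $(x^{*},t)$ forward in time keeps us inside the region $\{x>\mu_s\}$ where $u$ solves the equation and is bounded above by $1$; this is exactly what the continuity of $t\mapsto\mu_t$ (part of the definition of a classical solution) provides. No property of $u$ beyond those in that definition is used, which is why the authors call this lemma ``very simple''.
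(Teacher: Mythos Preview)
Your proof is correct. Both you and the paper proceed by contradiction, assuming $u(\cdot,t)\equiv1$ on an open interval to the right of $\mu_t$ and deducing that $u$ would then exceed $1$ at a slightly later time; the difference is in how this last step is executed. The paper invokes the Feynman--Kac representation of Corollary~\ref{lem:FKfbp} to get the lower bound $u(y,t+\delta)\ge e^{\delta}\P_y\big(B_s\in[c,x]\ \forall s\le\delta\big)$ directly, whereas you read off $\partial_t u(x^{*},t)=1$ from the PDE (using $\partial_x^2 u(x^*,t)=0$) and integrate the time derivative forward. Your route is arguably more elementary since it relies only on the $C^{2,1}$ regularity built into the definition of a classical solution and avoids the probabilistic representation entirely; the paper's route fits naturally with the Feynman--Kac machinery it has already developed.
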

\begin{proof}
Suppose, for a contradiction, that $\mu_t<x:=\inf\left(\{x\in \R:u(x,t)<1\}\cup \{\infty\}\right)$ for some $t>0$.
Take $c\in(\mu_t,x)$ and $\epsilon>0$ small enough that, by continuity,
$\mu_{t+s}<c\ \forall s\in[0,\epsilon]$.
Then by Corollary~\ref{lem:FKfbp}, for $y\in (c, x)$ and $\delta \in (0,\epsilon ]$,
$$
u(y,t+\delta)\geq e^\delta \P _y\big(B_s \in [c,x] \,\,
\forall s \leq \delta \big).
$$
This is strictly larger than 1
for $\delta$ sufficiently small, which is a contradiction.
\end{proof}
This lemma implies that if $u_1\le u_2$ then $\mu_1\le\mu_2$, and so the proof of the comparison property of
Theorem~\ref{main thm} will be a consequence of Theorem~\ref{thm u} and the uniqueness of classical solutions of~\eqref{FBP}. Furthermore, it implies that if $(u,\mu)$ and $(\tilde u,\tilde \mu)$ are two classical solutions to~\eqref{FBP}
with the same initial condition~$v$, it is sufficient to show that
 $u=\tilde u$ to obtain that
$\mu=\tilde\mu$.

For $t>0$, let $G_t$ denote the Gaussian semigroup operator, so that for $f\in L^\infty(\R)\cup L^1 (\R)$,
$$
G_t f(x)=p_t\conv f(x)=\int_{-\infty}^\infty \frac{1}{\sqrt{4\pi
t}}e^{-\frac {(x-y)^2}{4t}}f(y)\,\diffd y.
$$
For $m>0$, let $C_m$ denote the cut operator given by
$$C_m f(x)=\min(f(x),m).$$

Suppose that $v:\R\to [0,1]$ is as in Theorem~\ref{main thm}, \textit{i.e.}
$v$ is non-increasing, 
$v(x)\to 0$ as $x \to \infty$ and $v(x)\to1$ as $x\to-\infty$. 
For $n \in \mathbb Z _{\geq 0}$ and $\delta>0$, introduce
\begin{align*}
 u^{n,\delta,-}(x):=\big[e^\delta G_\delta C_{e^{-\delta}}\big]^n v(x)
\quad \text{ and } \quad
u^{n,\delta,+}(x)&:=\big[C_1 e^\delta G_\delta\big]^n v^{\delta,+}(x)
,
\end{align*} 
where we now define $v^{\delta,+}$. Recall that
$\mu_0=\inf \{x\in \R:v(x)<1\}\in\R\cup\{-\infty\}$; 
\begin{align}\text{if $\mu_0\in\R$, let \quad}
v^{\delta,+}(x)&=\begin{cases}1      &\text{if $x<\mu_0+\delta$}\\
				v(x)&\text{if $x\ge\mu_0+\delta$,}
	\end{cases} \label{eq:v+1}\\
\text{and \quad  if $\mu_0=-\infty$, let \quad}
v^{\delta,+}(x)&=\begin{cases}1      &\text{if $v(x)>1-\delta$}\\
				v(x)&\text{if $v(x)\le1-\delta$.}
	\end{cases} \label{eq:v+2}
\end{align}
Our proof of uniqueness relies on the Feynman-Kac representation of
Corollary \ref{lem:FKfbp} and the following two results.
\begin{lem} \label{lem:uniq1}
Suppose $(u,\mu)$ is a classical solution of~\eqref{FBP} with initial condition $v$.
Then for $n \in \mathbb Z _{\geq 0}$, $\delta >0$ and $x\in \R$,
\begin{equation}\label{lem61}
u^{n,\delta,-}(x)\leq u(x,n\delta) \leq u^{n,\delta,+}(x).
\end{equation}
\end{lem}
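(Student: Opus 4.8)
\textit{Proof proposal.} The plan is to establish both inequalities in \eqref{lem61} simultaneously, by induction on $n$, using at each step the Feynman--Kac representation of Corollary~\ref{lem:FKfbp} in its time-shifted form. The base case $n=0$ is immediate, since $u^{0,\delta,-}=v=u(\cdot,0)$ while $u^{0,\delta,+}=v^{\delta,+}\ge v$. For the inductive step, fix $n$, assume \eqref{lem61} at level $n$, and apply Proposition~\ref{lem:FKw}(2) on the time slab $A=\{(x,\theta):n\delta<\theta<(n+1)\delta,\ x>\mu_\theta\}$ with $K\equiv 1$, $S\equiv 0$, stopping time $\tau=\inf\{s\ge 0:B_s\le\mu_{(n+1)\delta-s}\}\wedge\delta$; since $u(B_\tau,(n+1)\delta-\tau)=1$ on $\{\tau<\delta\}$ (boundary value) and equals $u(B_\delta,n\delta)$ on $\{\tau=\delta\}$, this gives
\begin{equation}\label{planFK}
u(x,(n+1)\delta)=\E_x\big[e^{\tau}\indic{\tau<\delta}+e^{\delta}u(B_\delta,n\delta)\indic{\tau=\delta}\big].
\end{equation}

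For the \emph{lower bound}, the induction hypothesis gives $C_{e^{-\delta}}u^{n,\delta,-}\le\min(u(\cdot,n\delta),e^{-\delta})$, hence $u^{n+1,\delta,-}(x)=e^{\delta}\E_x[C_{e^{-\delta}}u^{n,\delta,-}(B_\delta)]\le e^{\delta}\E_x[\min(u(B_\delta,n\delta),e^{-\delta})]$. Splitting this expectation over $\{\tau<\delta\}$ and $\{\tau=\delta\}$ and using, on $\{\tau=\delta\}$, $\min(u(B_\delta,n\delta),e^{-\delta})\le u(B_\delta,n\delta)$, and on $\{\tau<\delta\}$, $e^{\delta}\min(u(B_\delta,n\delta),e^{-\delta})\le e^{\delta}e^{-\delta}=1\le e^{\tau}$, a comparison with \eqref{planFK} yields $u^{n+1,\delta,-}\le u(\cdot,(n+1)\delta)$. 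This part is routine: the cut level $e^{-\delta}$ is exactly calibrated so that $e^{\delta}e^{-\delta}\le e^{\tau}$.

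For the \emph{upper bound}, since $u\le 1$ it suffices to show $u(x,(n+1)\delta)\le e^{\delta}G_\delta u^{n,\delta,+}(x)$, which combined with $u\le 1$ gives $u(\cdot,(n+1)\delta)\le C_1 e^{\delta}G_\delta u^{n,\delta,+}=u^{n+1,\delta,+}$. Write $W(x,\rho):=e^{\rho}G_\rho u^{n,\delta,+}(x)$ (so $\partial_\rho W=\partial_x^2 W+W$) and $g(\sigma):=W(\mu_{n\delta+\sigma},\sigma)=e^{\sigma}G_\sigma u^{n,\delta,+}(\mu_{n\delta+\sigma})$. Decomposing both $W(x,\rho)$ and $u(x,n\delta+\rho)$ over the stopping time $\tau$ (now with $\rho$ in place of $\delta$) by the strong Markov property — using on $\{\tau<\rho\}$ that $B_{\tau}=\mu_{n\delta+\rho-\tau}$ and $u(B_{\tau},n\delta+\rho-\tau)=1$ — one obtains, for $x>\mu_{n\delta+\rho}$,
\begin{equation}\label{planident}
W(x,\rho)-u(x,n\delta+\rho)=e^{\rho}\E_x\big[(u^{n,\delta,+}-u(\cdot,n\delta))(B_\rho)\indic{\tau=\rho}\big]+\E_x\big[\indic{\tau<\rho}e^{\tau}\big(g(\rho-\tau)-1\big)\big].
\end{equation}
The first term on the right is $\ge 0$ by the induction hypothesis, so, granting the key boundary estimate
\begin{equation}\label{planstar}
g(\sigma)\ge 1\qquad\text{for all }\sigma\in(0,\delta],
\end{equation}
equation \eqref{planident} with $\rho=\delta$ gives $u(x,(n+1)\delta)\le W(x,\delta)$ for $x>\mu_{(n+1)\delta}$; and for $x\le\mu_{(n+1)\delta}$ we have $u(x,(n+1)\delta)=1$ and, since $u^{n,\delta,+}$ (hence $W(\cdot,\delta)$) is non-increasing — inductively, as $v^{\delta,+}$ is and $G_\delta$, $C_1$ preserve monotonicity — $W(x,\delta)\ge W(\mu_{(n+1)\delta},\delta)=g(\delta)\ge 1$. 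So the whole step reduces to \eqref{planstar}.

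Proving \eqref{planstar} is the heart of the matter and the step I expect to be the main obstacle; the approach is a bootstrap in $\sigma$. As $\sigma\searrow 0$, $g(\sigma)\to u^{n,\delta,+}(\mu_{n\delta})$, and $u^{n,\delta,+}(\mu_{n\delta})=1$: for $n=0$ this is precisely why $v^{\delta,+}$ is defined to equal $1$ on a neighbourhood of $\{v=1\}$ (so $v^{\delta,+}(\mu_0)=1$), while for $n\ge 1$ it is \eqref{planstar} for $n-1$ at $\sigma=\delta$ (as $\mu_{n\delta}=\mu_{(n-1)\delta+\delta}$), so the two levels of the induction are interlocked. The map $\sigma\mapsto g(\sigma)$ is continuous on $[0,\delta]$, by continuity of $\sigma\mapsto\mu_\sigma$ (from the proof of Proposition~\ref{prop mu}) together with the joint continuity of the heat flow. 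Evaluating \eqref{planident} at $x\downarrow\mu_{n\delta+\sigma}$ and using continuity of $u$ then shows: if $g\ge 1$ on $(0,\sigma)$ then $g(\sigma)\ge 1$. Combined with $g(0)=1$, this self-improving property, and a short-time strict improvement $g(\sigma)>1$ for all small $\sigma>0$ (which one expects to read off from the Neumann condition $\partial_xu(\mu_\theta,\theta)=0$, forcing $1-u(\mu_\theta+h,\theta)$ to vanish to second order and hence $\E_{\mu_{n\delta+\sigma}}[u(B_\sigma,n\delta)]=1-O(\sigma)$, at the cost of a modest regularity input on $u$ near the moving boundary), one closes the bootstrap by a connectedness argument on $\{\sigma\in[0,\delta]:g\ge 1\text{ on }[0,\sigma]\}$. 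The delicate points are: (i) the Feynman--Kac representation is degenerate when started exactly on the moving boundary, so every boundary-value argument must be carried out for $x>\mu_\theta$ and only then passed to the limit $x\downarrow\mu_\theta$; (ii) for $n=0$ the datum $u(\cdot,0)=v$ is merely measurable, so one must genuinely use the shape of $v^{\delta,+}$ rather than $v$; and (iii) making the short-time estimate and the open/closed dichotomy precise so that the bootstrap actually closes.
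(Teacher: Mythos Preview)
Your lower bound is exactly the paper's argument. Your upper-bound \emph{framework} is also essentially the paper's: reduce to the boundary estimate \eqref{planstar}. But your proof of \eqref{planstar} has a real gap, and the paper closes it by a different, sharper mechanism.

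First, the ``self-improving'' step does not do what you claim. As $x\downarrow\mu_{n\delta+\sigma}$, the stopping time $\tau$ in \eqref{planident} tends to $0$ almost surely (you start on the boundary), so the right-hand side collapses to $e^{0}\big(g(\sigma)-1\big)$ and the identity becomes vacuous; it does not force $g(\sigma)\ge1$. Second, your ``short-time strict improvement'' appeals to the Neumann condition $\partial_x u(\mu_\theta,\theta)=0$, but $g$ is built from $u^{n,\delta,+}$, not from $u$; there is no reason \emph{a priori} for $u^{n,\delta,+}$ to be flat at $\mu_{n\delta}$ (only that it equals~1 there, by the previous step). Third, even granting both steps, a connectedness argument of the type you sketch needs a \emph{strict} conclusion at every point of the set $\{g\ge1\}$ to be open as well as closed; with only $g(\sigma^\ast)\ge1$ you cannot rule out $g$ dipping below~1 immediately after $\sigma^\ast$. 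Relatedly, your induction only yields $g(\delta)\ge1$ at level $n$, i.e.\ $u^{n+1,\delta,+}(\mu_{(n+1)\delta})=1$ at a single point, which is not enough to restart the argument at level $n+1$ (you yourself used that $v^{\delta,+}=1$ on a \emph{neighbourhood} at $n=0$).

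The paper replaces the bootstrap by a first-contact argument that directly yields the strict form $\mu^{+}_{t}>\mu_{t}$ for all $t>0$, where $\mu^{+}_{t}=\inf\{x:W(x,t)<1\}$ with $W(x,t)=e^{t}G_{t}v^{+}(x)$ (your $W$). One sets $t_0=\inf\{t\ge0:\mu^{+}_{t}\le\mu_t\}$; the definition of $v^{\delta,+}$ gives $\mu^{+}_0>\mu_0$, hence $t_0>0$. For $t<t_0$ one has $B_\tau=\mu_{t-\tau}<\mu^{+}_{t-\tau}$ on $\{\tau<t\}$, so $W(B_\tau,t-\tau)\ge1$ there, and your identity \eqref{planident} immediately gives $u(\cdot,t)\le W(\cdot,t)$. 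If $t_0<\infty$, then at $t_0$ one has $W(\mu_{t_0},t_0)=1=u(\mu_{t_0},t_0)$, and combining $W\ge u$ with the Neumann condition $\partial_x u(\mu_{t_0},t_0)=0$ forces $\partial_x W(\mu_{t_0},t_0)=0$. But $W(\cdot,t_0)=e^{t_0}p_{t_0}\ast v^{+}$ with $v^{+}$ non-increasing and non-constant, so $\partial_x W(\cdot,t_0)<0$ strictly everywhere---a contradiction. Thus $t_0=\infty$, which gives both \eqref{planstar} and, crucially, the strict separation $\mu^{+}_{\delta}>\mu_\delta$ that propagates the hypothesis $\mu^{+}_0>\mu_0$ to the next inductive step. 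This is the missing idea in your proposal.
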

\begin{lem} \label{lem:uniq2}
For any $\delta>0$, $n \in \mathbb Z _{\geq 0}$, and $A\ge\frac12$,
$$
\int_{-A}^A\Big|u^{n,\delta,+}(x)-u^{n,\delta,-}(x)\Big|\,\diffd x
\leq 
4A(1+e^{\delta n})(e^\delta -1).
$$
\end{lem}
\smallskip

Suppose that $(u,\mu)$ and $(\tilde u,\tilde \mu)$ are classical solutions of~\eqref{FBP} with initial condition $v$.
Then by Lemmas~\ref{lem:uniq1} and~\ref{lem:uniq2}, for $t> 0$, $n \in \mathbb Z_{\geq 0}$ and $A\geq \frac12$,
$$
\int_{-A}^A\Big|u(x,t)-\tilde u(x,t)\Big|\,\diffd x
\leq 
\int_{-A}^A\Big|u^{n,\frac t n,+}(x)-u^{n,\frac t n,-}(x)\Big|\,\diffd x
\leq  4A(1+e^t)(e^{\frac tn }-1).
$$
Since $n\in \mathbb Z_{\geq 0}$ can be taken arbitrarily large, it follows that
$\int_{-A}^A\big|u(x,t)-\tilde u(x,t)\big|\,\diffd x=0$. Letting $A\to \infty$, by continuity of $u(\cdot,t)$ and
$\tilde u(\cdot,t)$ it follows that $u(x,t)=\tilde u(x,t)$ $\forall x\in \R$.
Therefore
 $(u,\mu)$ is the unique classical solution to~\eqref{FBP} with initial condition $v$.

It remains to prove Lemmas~\ref{lem:uniq1}--\ref{lem:uniq2}.
We shall require the following preliminary result for the proof of Lemma~\ref{lem:uniq1}.
\begin{lem} \label{lem:mu+}
Suppose $v^+:\R\to [0,1]$ is non-increasing with $v^+(x)\to 0$ as $x\to
\infty$ and 
$v^+(x)=1$ for some $x\in \R$.
For $t\geq 0$, 
let $u^+(\cdot,t)=e^t G_t v^+(\cdot)$ and
let $$\mu^+_t = \inf\{x\in \R:u^{+}(x,t)<1\}.$$
Then $\mu^+_t\in \R$ $\forall t\geq 0$ and $t\mapsto \mu^+_t$ is continuous.
\end{lem}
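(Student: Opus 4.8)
The key point is that $u^+$ is merely the solution of the linear equation, $u^+(\cdot,t)=e^tG_tv^+$, so that for each fixed $t>0$ the function $u^+(\cdot,t)$ inherits strong regularity from the heat kernel, while the factor $e^t>1$ does the work near $t=0$. I would begin by recording some elementary facts. Writing $u^+(x,t)=e^t\,p_t\conv v^+(x)=e^t\,\E_x[v^+(B_t)]$ and using that $v^+$ is non-increasing and $p_t\ge0$, the map $x\mapsto u^+(x,t)$ is non-increasing for every $t\ge0$. Since $v^+\to0$ at $+\infty$ and $v^+\equiv1$ on a left half-line (by monotonicity and the hypothesis $v^+(x)=1$ for some $x$), dominated convergence gives $u^+(x,t)\to0$ as $x\to+\infty$ and $u^+(x,t)\to e^t$ as $x\to-\infty$ for every $t\ge0$; together with $e^t>1$ for $t>0$, the identity $u^+(\cdot,0)=v^+$, continuity of $u^+(\cdot,t)$ for $t>0$, and monotonicity, this yields $\mu^+_t\in\R$ for all $t\ge0$ and moreover $\{x:u^+(x,t)<1\}=(\mu^+_t,\infty)$ for every $t>0$. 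For $t>0$, $u^+$ is continuous (indeed $C^\infty$ in $x$ by Lemma~\ref{lemsmooth}(1), and jointly continuous on $\R\times(0,\infty)$ by dominated convergence), and $u^+(\cdot,t)$ is real-analytic in $x$ (it is a solution of the heat equation up to the factor $e^t$; equivalently it extends to an entire function).

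Next I would prove that $t\mapsto\mu^+_t$ is continuous on $(0,\infty)$. Fix $t>0$. For the $\limsup$: given $z>0$ we have $u^+(\mu^+_t+z,t)<1$, so by continuity of $u^+$ on $\R\times(0,\infty)$ there is $\eta>0$ with $u^+(\mu^+_t+z,s)<1$ for $|s-t|<\eta$, whence $\mu^+_s\le\mu^+_t+z$; letting $z\searrow0$ gives $\limsup_{s\to t}\mu^+_s\le\mu^+_t$. For the $\liminf$: suppose instead that $\mu^+_{s_n}\to a<b:=\mu^+_t$ along some $s_n\to t$, and fix $c\in(a,b)$. For large $n$, $\mu^+_{s_n}<c$, so $u^+(c,s_n)<1$, while $c<\mu^+_t$ gives $u^+(c,t)\ge1$; passing to the limit using continuity of $u^+$ forces $u^+(c,t)=1$. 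As $c$ ranges over $(a,b)$, $u^+(\cdot,t)$ is then identically $1$ on a nonempty open interval, hence identically $1$ on $\R$ by analyticity, contradicting $u^+(x,t)\to0$ as $x\to\infty$. Thus $\liminf_{s\to t}\mu^+_s\ge\mu^+_t$, and continuity on $(0,\infty)$ follows. (This $\liminf$ step is the only place the analyticity of $u^+(\cdot,t)$ is used.)

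It remains to prove continuity at $t=0$, where $v^+$ need not be continuous and the analyticity argument is unavailable; this is the main obstacle, and I would handle it by direct Gaussian estimates, using crucially that $e^s>1$ for $s>0$. For the $\limsup$: given $z>0$, monotonicity gives $v^+(y)\le v^+(\mu^+_0+z/2)=:1-\kappa<1$ for all $y\ge\mu^+_0+z/2$, so
$$u^+(\mu^+_0+z,s)\le e^s\Big[(1-\kappa)+\kappa\,\P_{\mu^+_0+z}\big(B_s<\mu^+_0+z/2\big)\Big];$$
the probability tends to $0$ and $e^s\to1$ as $s\searrow0$, so $u^+(\mu^+_0+z,s)<1$ for $s$ small, i.e.\ $\mu^+_s\le\mu^+_0+z$. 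For the $\liminf$: given $z>0$, $v^+\equiv1$ on $(-\infty,\mu^+_0-z/2]$, so
$$u^+(\mu^+_0-z,s)\ge e^s\,\P_{\mu^+_0-z}\big(B_s\le\mu^+_0-z/2\big)\ge e^s\big(1-e^{-z^2/(16s)}\big)$$
by the standard Gaussian tail bound; since $e^s\ge1+s$ and $e^se^{-z^2/(16s)}=o(s)$ as $s\searrow0$, the right-hand side exceeds $1$ for $s$ small, so $\mu^+_s\ge\mu^+_0-z$. Letting $z\searrow0$ in both bounds gives $\mu^+_s\to\mu^+_0$, which completes the proof.
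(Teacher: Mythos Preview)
Your proof is correct. The paper itself does not spell out a proof of this lemma, remarking only that it ``can be proved, for instance, using the same techniques as in Section~\ref{sec:proof1}.'' Those techniques would establish a one-sided increment bound via comparison with a carefully chosen indicator initial datum (as in Proposition~\ref{propnottoomuchontheleft}), deduce c\`adl\`ag regularity from that bound, and then rule out jumps by an analyticity argument on an auxiliary boundary-value problem. Your argument is more direct: because $u^+=e^tG_tv^+$ is explicit, you read off monotonicity, joint continuity and real-analyticity in $x$ straight from the heat kernel, handle the $\limsup$ by joint continuity, the $\liminf$ on $(0,\infty)$ by analyticity (ruling out a nontrivial flat piece at level~$1$), and the limit $t\searrow0$ by elementary Gaussian tail bounds rather than by comparison. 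Both routes ultimately rely on analyticity to exclude a plateau of $u^+(\cdot,t)$, but yours bypasses the comparison machinery of Section~\ref{sec:proof1} by exploiting the linearity and explicit semigroup representation; this is a natural and clean simplification for the present setting.
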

This is
a simple result about the heat equation, which can be proved, for instance,
using the
same techniques as in Section~\ref{sec:proof1}.

\begin{proof}[Proof of Lemma~\ref{lem:uniq1}]
We shall show the following result: 
suppose 
that $v:\R\to [0,1]$ is as in Theorem~\ref{main thm}
and
that
$(u,\mu)$ is a classical solution of \eqref{FBP} with
initial condition $v$.
Let $\mu_0=\inf\{x\in \R:v(x)<1\}\in\R\cup\{-\infty\}$.
Suppose $v^-$ and $v^+$
are non-increasing functions with
$$0\le v^-\le v\le v^+\le 1,$$
and that $v^-(x)\to 1$ as $x\to -\infty$, $v^+(x)\to 0$ as $x\to \infty$
 and $\mu^+_0:=\inf\{x\in \R:v^+(x)<1\} >\mu_0$.
Take $\delta >0$. For $t\geq 0$ and $x\in \R$, let
$$
u^+(x,t)=e^t G_t v^+ (x) \quad \text{ and }\quad u^-(x,t)=e^t G_t C_{e^{-\delta}} v^- (x).
$$
Let $\mu^+_t=\inf\{x\in \R:u^+(x,t)<1\}$.
Then we shall prove that
\begin{equation}
u^- (x,\delta)\le u(x,\delta) \le  u^+ (x,\delta) \,\, \forall x\in \R \quad \text{ and } \quad \mu_\delta^+>\mu_\delta.
\label{comp}
\end{equation}
Since $u(x,\delta)\in [0,1]$, it follows from~\eqref{comp} that
\begin{equation*}
 0\leq e^\delta G_\delta C_{e^{-\delta}}\,v^-(x)\le u(x,\delta) \le  C_1 e^\delta
G_\delta \,v^+(x)\leq 1,
\end{equation*}
and~\eqref{lem61} follows by the definition of $v^{\delta,+}$ and by induction on $n$.

We now prove~\eqref{comp}.
 Let
$\tau=\inf\{s\geq 0:B_s\leq \mu_{\delta-s}\}\wedge \delta $.
Then by Corollary~\ref{lem:FKfbp},
for $x\in \R$,
\begin{align*}
u(x,\delta)
&=\E_x \left[e^\delta v(B_\delta) \indic{\tau= \delta}
+e^\tau \indic{\tau < \delta}\right]\\
&\geq \E_x \left[e^\delta \min(v(B_\delta),e^{-\delta}) \indic{\tau= \delta}
+e^\delta \min(v(B_\delta),e^{-\delta}) \indic{\tau < \delta}\right]\\
&= \E_x \left[e^\delta \min(v(B_\delta),e^{-\delta}) \right]\\
&= u^{-}(x,\delta).
\end{align*}

Now let $t_0:= \inf\{t\geq 0:\mu^+_t \leq \mu_t\}$. By continuity of
$\mu_t$ and $\mu^+_t$ (from Lemma~\ref{lem:mu+}), we have $t_0>0$. We will show below that
$t_0=\infty$.
Take $t< t_0$ and, again, let
$\tau=\inf\{s\geq 0:B_s\leq \mu_{t-s}\} \wedge t$. By Proposition~\ref{lem:FKw} we have
\begin{equation*} \label{eq:uniq(dagger)}
u^{+}(x,t)=\E_x \left[
e^t v^+(B_t)\indic{\tau=t}+e^\tau u^{+}(B_\tau, t-\tau)\indic{\tau <t}\right].
\end{equation*}
Then, again by Corollary~\ref{lem:FKfbp},
\begin{align*}
u(x,t)
&=\E_x \left[e^t v(B_t)\indic{\tau=t}+e^\tau \indic{\tau<t}\right]\\
&\leq \E_x \left[e^t v^{+}(B_t)\indic{\tau=t}+e^\tau u^{+}(B_\tau,t-\tau)\indic{\tau<t}\right]\\
&=u^{+}(x,t),
\end{align*}
where the second line follows since $v\leq v^+$ and since, on $\{\tau<t\}$,
we have $B_\tau=\mu_{t-\tau}<\mu^+_{t-\tau}$ 
and so $u^{+}(B_\tau,t-\tau)\geq 1$.
By continuity, the inequality also holds for $t=t_0$ and so
\begin{equation} \label{eq:uniqu(fivestar)}
u^{+}(x,t)\geq u(x,t)\,\,\forall x\in \R, t\leq t_0.
\end{equation}

Suppose, for a contradiction, that $t_0<\infty$.
Then, by continuity, $\mu_{t_0}^+=\mu_{t_0}$.
Hence $u(\mu_{t_0},t_0)=1=u^{+}(\mu_{t_0},t_0)$
and $\partial_x u(\mu_{t_0},t_0)=0$,
and so by~\eqref{eq:uniqu(fivestar)}, $\partial_x u^{+}(\mu_{t_0},t_0)= 0$.

Note that $u^+$ is smooth on $\R\times (0,\infty)$ and, by the same argument as in Lemma~\ref{lem:uchi}, for $t>0$, $\partial _x u^+(\cdot,t/2)$ is bounded.
Therefore for $x\in \R$,
$$
\partial _x u^+(x,t)
=e^{t/2} p_{t/2} \ast \partial _x u^+(x,t/2)<0
$$
since $u^+(\cdot,t/2)$ is a non-increasing non-constant function.

We now have a contradiction. Therefore $t_0=\infty$ and we have $\mu_\delta^+>\mu_\delta$ and $u^+(x,\delta)\geq u(x,\delta)$ $\forall x\in \R$ by~\eqref{eq:uniqu(fivestar)}.
This completes the proof of~\eqref{comp}.
\end{proof}

\begin{proof}[Proof of Lemma~\ref{lem:uniq2}]
Some of the ideas in this proof are from Section~4.3 of~\cite{deMasi2017b}.

In this proof, we use both the supremum norm $\|\ \|_\infty$ and the $L^1$
norm $\|\ \|_1$. When a property holds for both norms, we simply write it
with $\|\ \|$.

Note the following basic properties of our operators: for $f,g \in
L^\infty(\R)\cup L^1(\R)$,  $m>0$ and $t> 0$, we have for either norm that
\begin{equation} \label{eq:uniqa}
\|C_m f-C_m g\| \leq \|f-g\|,\quad
\|G_t f\| \leq \|f\|.\quad 
\end{equation}
For the supremum norm, we also have that
\begin{equation} \label{eq:uniqb}
\|C_m f-f\|_\infty \leq \max\big( \|f\|_\infty -m,0\big).
\end{equation}
For $w:\R\to [0,\infty)$, $\delta>0$ and $x\in \R$,
\begin{equation} \label{eq:uniq1}
C_1 e^\delta w(x)=\min(e^\delta w(x),1)=e^\delta\min( w(x),e^{-\delta})
=e^\delta C_{e^{-\delta}}w(x).
\end{equation}
Using \eqref{eq:uniq1}, we can rewrite $u^{n,\delta,-}$ as
\begin{equation} \label{eq:un-}
u^{n,\delta,-}
=
  [e^\delta G_\delta C_{e^{-\delta}}]^n\, v
= [G_\delta C_1 e^\delta ]^n\, v
= G_\delta [C_1 e^\delta G_\delta ]^{n-1}C_1 e^\delta \, v.
\end{equation}
We can also write $u^{n,\delta,+}$ as
\begin{equation} \label{eq:un+}
u^{n,\delta,+}
= [C_1 e^\delta G_\delta ]^n\, v^{\delta,+}
= C_1 e^\delta G_\delta   [C_1 e^\delta G_\delta ]^{n-1}\, v^{\delta,+}.
\end{equation}
Now let
$$
f:=e^\delta G_\delta   [C_1 e^\delta G_\delta ]^{n-1}\, v^{\delta,+}
 -e^\delta G_\delta   [C_1 e^\delta G_\delta ]^{n-1}\, v,
$$
and let $g:=u^{n,\delta,+}-u^{n,\delta,-}-f$. By the triangle inequality, we have 
\begin{align*}
\big\|g\big\|_\infty
&\le
\big\|u^{n,\delta,+}-e^\delta G_\delta [C_1 e^\delta G_\delta ]^{n-1}\,
v^{\delta,+}\big\|_\infty 
+
\big\|e^\delta G_\delta [C_1 e^\delta G_\delta ]^{n-1}\,
v-u^{n,\delta,-}\big\|_\infty .
\end{align*}
By our expression for $u^{n,\delta,+}$ in~\eqref{eq:un+} and the properties of $G_\delta$ and $C_1$ in~\eqref{eq:uniqa} and~\eqref{eq:uniqb} respectively,
the first term on the right hand side is bounded above by $e^\delta-1$. A second application of the
triangle inequality then yields
\begin{align*}
\big\|g\big\|_\infty
&\le e^\delta-1+(e^\delta-1) \big\| G_\delta [C_1 e^\delta G_\delta ]^{n-1}\,
v\big\|_\infty+\big\| G_\delta [C_1 e^\delta G_\delta ]^{n-1}\,
v-u^{n,\delta,-}\big\|_\infty .
\end{align*}
Clearly $\big\| G_\delta [C_1 e^\delta G_\delta ]^{n-1}\,
v\big\|_\infty\le 1$ by \eqref{eq:uniqa}. Replacing $u^{n,\delta,-}$ by its expression in~\eqref{eq:un-} gives
\begin{align*}
\big\|g\big\|_\infty
&\le 2(e^\delta-1)+
\big\|G_\delta [C_1 e^\delta G_\delta ]^{n-1}v-
G_\delta [C_1 e^\delta G_\delta ]^{n-1}
C_1e^\delta v\big\|_\infty
\\&\le 2(e^\delta-1)+e^{\delta(n-1)}
\big\|v-C_1e^\delta v\big\|_\infty ,
\end{align*}
where \eqref{eq:uniqa} was used repeatedly in the second inequality.
But $\big\|v-C_1e^\delta v\|_\infty
    =\big\|C_1v-C_1e^\delta v\|_\infty\le e^\delta -1$
   by~\eqref{eq:uniqa},
and so 
\begin{align} \label{eq:g}
\big\|g\big\|_\infty
&\le 2(e^\delta-1)+ e^{\delta(n-1)}(e^\delta-1)
\le (2+e^{\delta n})(e^\delta -1).
\end{align}
By~\eqref{eq:uniqa} applied repeatedly, for either norm we have
$$\|f\|\le e^{\delta n}\|v^{\delta,+}-v\|.$$
We now need to consider two cases.
\begin{itemize}
\item
If $\mu_0=-\infty$, then, by our definition of $v^{\delta,+}$ in~\eqref{eq:v+2}, we have 
$\|v^{\delta,+}-v\|_\infty=\delta$ and so $\|f\|_\infty\le e^{\delta
n}\delta$.
\item If $\mu_0\in\R$, then by our definition of $v^{\delta,+}$ in~\eqref{eq:v+1}, $\|v^{\delta,+}-v\|_1\le\delta$   and so
$\|f\|_1\le e^{\delta n}\delta$. 
\end{itemize}
In either case, if $A\ge\frac12$ then
$$\int_{-A}^A |f(x)|\,\diffd x\le 2A e^{\delta n}\delta\le 2Ae^{\delta
n}(e^\delta-1).$$
By~\eqref{eq:g}, we also have
$$\int_{-A}^A |g(x)|\,\diffd x\le 2A (2+e^{\delta n})
(e^\delta-1).$$
By a final application of the triangle inequality 
to $u^{n,\delta,+}-u^{n,\delta,-}=f+g$,
the result follows.
\end{proof}


\section{Proof of the Feynman-Kac results from Section \ref{sec:FK}}\label{sec:FK proofs}

Before proving Proposition~\ref{lem:FKw}, we need the following result:

\begin{lem} \label{lem:min0}
Let $f:[0,1]\to \R \cup \{-\infty\}$ be continuous with $f(0)<0$ and $f(1)<0$. 
Let $(\xi_t)_{t\in [0,1]}$ denote a Brownian bridge (with diffusivity $\sqrt 2$) from 0 to 0 in time $1$.
Then
\[
\P\left( \min_{s\le 1} (\xi_s-f(s))=0\right) =0.
\]
\end{lem}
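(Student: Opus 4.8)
The plan is to reduce the statement to a local property of Brownian paths near the two endpoints and near any interior zero of $s\mapsto \xi_s-f(s)$, and then exploit the fact that a Brownian bridge, like Brownian motion, crosses any fixed continuous level immediately and infinitely often whenever it touches it. Write $g(s):=\xi_s-f(s)$, which is a.s. continuous on $[0,1]$. Since $f(0)<0$ and $f(1)<0$ while $\xi_0=\xi_1=0$, we have $g(0)=-f(0)>0$ and $g(1)=-f(1)>0$; by continuity there is a (random) $\eta>0$ with $g>0$ on $[0,\eta]\cup[1-\eta,1]$, so the minimum of $g$ over $[0,1]$ is attained in the compact interior region $[\eta,1-\eta]$ and, if it equals $0$, it is attained at some interior point.

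First I would set $M:=\min_{s\le 1} g(s)$ and observe $M\le g(1/2)$, and more to the point that on the event $\{M=0\}$ there is an interior point $s^\ast\in(0,1)$ with $g(s^\ast)=0$ and $g\ge 0$ on a neighbourhood of $s^\ast$. I would then argue that this event has probability zero by a stopping-time / strong Markov argument: condition on the first time $T$ that $g$ hits $0$ (on the event that such a time exists in $(0,1)$, which contains $\{M=0\}$ up to a null set once the endpoint behaviour above is accounted for). At time $T$ we have $\xi_T=f(T)$ with $T\in(0,1)$, and conditionally on $(\xi_s)_{s\le T}$ the process $(\xi_s)_{s\ge T}$ is, on the time interval $[T,1]$, a Brownian bridge from $f(T)$ to $0$; in particular, on a short interval $[T,T+\epsilon]$ it behaves like a Brownian motion started at $f(T)$ plus a bounded-variation drift, so it a.s. takes values strictly below $f(T)$ at arbitrarily small times after $T$. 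Meanwhile $f$, being continuous, stays within $o(1)$ of $f(T)$ on $[T,T+\epsilon]$, and in fact the oscillation of the (genuine) Brownian part dominates: by the law of the iterated logarithm for Brownian motion, $\liminf_{\epsilon\downarrow 0}\epsilon^{-1/2}(\xi_{T+\epsilon}-f(T))=-\infty$ while $\epsilon^{-1/2}(f(T+\epsilon)-f(T))\to 0$ is impossible to control in general — so instead I would phrase it more robustly: a.s. $\inf_{s\in(T,T+\epsilon]}(\xi_s-f(T))<0$ for every $\epsilon>0$, hence $\inf_{s\in(T,T+\epsilon]}g(s)<\inf_{s\in(T,T+\epsilon]}(f(s)-f(T))\to 0$, giving points where $g<0$ arbitrarily close to $T$ from the right. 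Since $T$ is the \emph{first} zero, $g>0$ on $[0,T)$; combined with points where $g<0$ just after $T$, we get $M<0$. Thus $\{M=0\}\subseteq\{g$ never hits $0$ in $(0,1)\}\cap\{M=0\}$, which is empty, so $\P(M=0)=0$.

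The cleanest way to run the strong Markov step is to realise the Brownian bridge as $\xi_s=B_s-sB_1$ for a standard ($\sqrt 2$-diffusivity) Brownian motion $B$, so that conditionally on $\mathcal F_T$ (with $T$ a stopping time for $B$, which the first hitting time of the continuous moving boundary $s\mapsto f(s)+sB_1$ by $B$ is, after first conditioning on $B_1$), the increments $B_{T+s}-B_T$ form a fresh Brownian motion independent of $\mathcal F_T$; one then needs only that $s\mapsto f(T+s)-f(T)+((T+s)-T)B_1 = f(T+s)-f(T)+sB_1$ has modulus bounded by $Cs + o(1)$ near $s=0$ while the fresh Brownian motion has $\inf_{s\le\epsilon}(B_{T+s}-B_T)<0$ a.s. for all $\epsilon>0$. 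Alternatively, and perhaps more simply, one can avoid conditioning on $B_1$ by noting that $\xi$ restricted to any $[a,b]\subset(0,1)$ has a law mutually absolutely continuous with respect to Brownian motion on $[a,b]$ (Girsanov / explicit density of the bridge), so the a.s.\ statement ``$\xi$ immediately goes strictly below any level it touches, from both sides'' transfers from Brownian motion. I expect \textbf{the main obstacle} to be writing the endpoint argument and the ``immediately crosses'' argument with enough care that one genuinely gets $g<0$ (strict) rather than merely $g\le 0$: the point $f$ could itself decrease right after $T$, so one must be sure the Brownian fluctuation, which is of order $\sqrt{\epsilon}$, beats the at-worst-$O(\epsilon)$ movement of $f$ and of the bridge drift; this is exactly where $\liminf_{\epsilon\downarrow 0}\epsilon^{-1/2}\min_{s\le\epsilon}(B_{T+s}-B_T)=-\infty$ (a consequence of scaling plus Blumenthal's $0$–$1$ law) does the job, and I would lean on the absolute-continuity reduction to import it to the bridge.
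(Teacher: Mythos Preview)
Your strong-Markov/immediate-crossing strategy has a genuine gap at the point you yourself flag as the main obstacle. The function $f$ is only assumed continuous (with values in $\R\cup\{-\infty\}$), so the assertion that ``the at-worst-$O(\epsilon)$ movement of $f$'' is beaten by the $\sqrt\epsilon$-scale Brownian fluctuation is simply false in general. Concretely, after the first hitting time $T$ you have
\[
g(T+s)=\tilde B_s+(\text{bridge drift of order }s)-\bigl(f(T+s)-f(T)\bigr),
\]
with $\tilde B$ a fresh Brownian motion; if $f(T+s)-f(T)\sim -s^{1/4}$ then $g(T+s)=\tilde B_s+s^{1/4}+O(s)>0$ for all small $s$ a.s., so the process does \emph{not} dip strictly below zero immediately. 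Your displayed inequality $\inf_{s\in(T,T+\epsilon]}g(s)<\inf_{s\in(T,T+\epsilon]}(f(s)-f(T))$ is neither justified nor true in general (one cannot split the infimum of a difference this way); the best one gets from $\inf_{s}(\xi_s-f(T))<0$ is $\inf_s g(s)\le \sup_s(f(T)-f(s))\to 0$, which only recovers $\inf g\le 0$, already known since $g(T)=0$. To rescue the approach you would need to show that $T$ a.s.\ avoids the set of times at which $f$ drops faster than an upper function for Brownian motion, and you have not supplied such an argument.

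The paper's proof avoids local path behaviour entirely and is quite different. It conditions on the midpoint value $\xi_{1/2}=z$, under which $(\xi_s)_{s\le 1/2}$ is a fixed Brownian bridge $b$ from $0$ to $0$ plus the linear drift $s\mapsto 2sz$. For each fixed continuous path $b$ with $b(0)=0$, the map $z\mapsto\min_{s\le 1/2}\{b(s)+2sz-f(s)\}$ is nondecreasing, and if the minimum equals $0$ it must be attained at some $s>0$ (since the value at $s=0$ is $-f(0)>0$), which forces strict monotonicity at that $z$; hence at most one $z$ gives minimum $0$. Since $\xi_{1/2}$ has a density, Fubini yields probability zero on $[0,1/2]$, and time-reversal handles $[1/2,1]$.
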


\begin{proof}
By a union bound, we have that
\begin{align*}
&\P\left( \min_{s\le 1} (\xi_s-f(s))=0\right)\\
& \quad \leq \P\left( \min_{s\le 1/2} (\xi_s-f(s))=0\right)
+ \P\left( \min_{s\le 1/2} (\xi_{1-s}-f(1-s))=0\right).
\end{align*}
Given any fixed continuous function $b:[0,\infty)\to \R$ with $b(0)=0$, there is at most one value of $z\in \R$ such that 
\[
\min_{s\le 1/2} \left\{b(s) +2sz -f(s)\right\} =0.
\]
Thus,
recalling the definition of $p_t$ in~\eqref{ptdef},
\begin{align*}
&\P \left( \min_{s\le 1/2} (\xi_s-f(s))=0\right)\\
 & \quad  = \E\left[ \P\left[ \left. \min_{s\le 1/2} (\xi_s-f(s))=0 \right|\xi_{1/2}\right]\right]
\\
& \quad = \int_{-\infty}^\infty \diffd z\, p_{1/4}(z) \P\left(   \min_{s\le 1/2} \left\{\tfrac{1}{\sqrt 2} \xi_{2s} +2sz -f(s)\right\} =0 \right)
\\
& \quad= \E \left[  \int_{-\infty}^\infty \diffd z \,p_{1/4}(z) \indic{ \min_{s\le 1/2} \{\frac{1}{\sqrt 2} \xi_{2s} +2sz -f(s)\} =0 }   \right]
\\
& \quad =0,
\end{align*}
where the second equality holds since $\xi_s \sim N(0,2s(1-s))$,
the third equality follows by Fubini's theorem and
the last equality follows because for each realisation of $(\xi_s)_{s\in [0,1]}$ there is at most one value of $z$ for which the integrand is non-zero.
By the same argument,
$$ \P\left( \min_{s\le 1/2} (\xi_{1-s}-f(1-s))=0\right)=0,$$
and the result follows.
\end{proof}

\begin{proof}[Proof of Proposition \ref{lem:FKw}]
Fix $(x,t)\in A$. We begin by proving the result under condition~1. For $\sigma\in [0,\tau]$, let 
$$M_\sigma=w(B_\sigma,t-\sigma)e^{I_\sigma}+\int_0^\sigma\diffd r\,
S(B_r,t-r) e^{I_r},\qquad\text{where }I_\sigma=\int_0^\sigma
K(B_s,t-s)\,\diffd s .$$
Since $w$ is $C^{2,1}$ on $A$, for $\sigma\leq \tau$, we apply 
It\^o's formula (with no leading $\frac12$ in front of the $\partial_x^2$
term because $(B_s)_{s\geq 0}$ has diffusivity $\sqrt 2$):
$$
\begin{aligned}
\diffd M_\sigma &=
\hphantom{+} \partial_x  w(B_\sigma,t-\sigma)e^{I_\sigma}\diffd B_\sigma
+\partial_x^2w(B_\sigma,t-\sigma)e^{I_\sigma}\diffd   \sigma
\\&\hphantom{=}-\partial_tw(B_\sigma,t-\sigma)e^{I_\sigma}\diffd\sigma
+          w(B_\sigma,t-\sigma)e^{I_\sigma}K(B_\sigma,t-\sigma)\diffd \sigma
+ S(B_\sigma,t-\sigma) e^{I_\sigma}\diffd\sigma
\\
&=\hphantom{+}\partial_x  w(B_\sigma,t-\sigma)e^{I_\sigma}\diffd B_\sigma+
[{-\partial_tw+\partial_x^2w+Kw+S}](B_\sigma,t-\sigma)e^{I_\sigma}\diffd   \sigma
\\&=\hphantom{+}\partial_x  w(B_\sigma,t-\sigma)e^{I_\sigma}\diffd
B_\sigma,
\end{aligned}
$$
where we used \eqref{equ w} in the last line, since $(B_\sigma,t-\sigma)\in A$ for $\sigma\leq \tau$. We see that 
$(M_\sigma)_{\sigma \leq \tau}$ is a local martingale.
Therefore, since $(M_\sigma)_{\sigma \leq \tau}$ is bounded, we have that
$$
w(x,t)=\E_x[M_0]=\E_x[M_\tau],
$$
which yields the result~\eqref{eq:FKgen} under condition~1.

We now turn to condition~2, 
with $A=\{(x,t): t\in(0,T), x>\mu_t\}$ and
$\tau=\inf\big\{s\ge0:B_s\le \mu_{t-s}\big\}\wedge t$. The stopping time
$\tau$ is the first time that $(B_\tau,t-\tau)\in\partial A$.
For $\epsilon>0$ and $\delta>0$, introduce the stopping times
$$\tau_{\epsilon,\delta}=\inf\big\{s\ge0:B_s\le \mu_{t-s}+\delta\big\}\wedge
(t-\epsilon),\quad
\tau_{\epsilon}=\inf\big\{s\ge0:B_s\le \mu_{t-s}\big\}\wedge
(t-\epsilon)=\tau\wedge (t-\epsilon).
$$
By~\eqref{eq:FKgen} under condition~1 with stopping time $\tau_{\epsilon,\delta}$ we have that for $x>\mu_t$,
$$w(x,t)=\E_x
\left[w(B_{\tau_{\epsilon,\delta}},t-\tau_{\epsilon,\delta})e^{\int_0^{\tau_{\epsilon,\delta}} K(B_s,t-s)\,\diffd s} 
  + \int_0^{\tau_{\epsilon,\delta}}\diffd r\, S(B_r,t-r)e^{\int_0^{r   } K(B_s,t-s)\,\diffd
s}\right].$$
We now take the limit $\delta\to0$. Since $\mu$ is continuous,
$\tau_{\epsilon,\delta}\to\tau_\epsilon$ as $\delta\to0$, and since $w$,
$K$ and $S$ are bounded, and $w$ is continuous on $\bar A \cap (\R\times (0,\infty))$, we 
obtain, by continuity and dominated convergence,
\begin{equation} \label{eq:w_epsilon}
w(x,t)=\E_x
\left[w(B_{\tau_{\epsilon}},t-\tau_{\epsilon})e^{\int_0^{\tau_{\epsilon}} K(B_s,t-s)\,\diffd s} 
  + \int_0^{\tau_{\epsilon}}\diffd r\, S(B_r,t-r)e^{\int_0^{r   } K(B_s,t-s)\,\diffd
s}\right].
\end{equation}

We now take the limit $\epsilon\to0$ to prove \eqref{eq:FKgen} under
condition~2. Note that $\tau_\epsilon\nearrow\tau$ as $\epsilon\searrow0$;
as $S$ and $K$ are
bounded, by dominated convergence
$$
\E_x \left[
   \int_0^{\tau_\epsilon}\diffd r\, S(B_r,t-r)e^{\int_0^{r   } K(B_s,t-s)\,\diffd s}\right]
\to
\E_x \left[
   \int_0^{\tau}\diffd r\, S(B_r,t-r)e^{\int_0^{r   } K(B_s,t-s)\,\diffd
s}\right]\qquad\text{as $\epsilon\searrow0$}.
$$
We now turn to the first term on the right hand side of~\eqref{eq:w_epsilon}. As above, for $r\geq 0$, let
$I_r=\int_0^{r}
K(B_s,t-s)\,\diffd s$.
Write
\begin{equation*}
\E_x \left[ w(B_{\tau_\epsilon},t-\tau_\epsilon)e^{I_{\tau_\epsilon}}\right]
=
\E_x \left[ w(B_{\tau_\epsilon},t-\tau_\epsilon)e^{I_{\tau_\epsilon}
}\indic{\tau<t}\right]
+
\E_x \left[ w(B_{t-\epsilon},\epsilon)e^{I_{t-\epsilon}}
 \indic{\tau=t}\right]
\end{equation*}
(we used that $\tau_\epsilon=t-\epsilon$ when $\tau=t$).
Since $w$ and $K$ are bounded, and $w$ is continuous on $\bar A \cap (\R \times (0,\infty))$, by
continuity and dominated convergence the first term on the right hand side converges to 
$\E_x \left[ w(B_\tau,t-\tau)e^{I_\tau}\indic{\tau<t}\right] $ as $\epsilon \searrow 0$. 
For the second term, write
\begin{equation}
\label{3.3}
\begin{aligned}
\E_x\Big[
w(B_{t-\epsilon},\epsilon)e^{I_{t-\epsilon}}
 \indic{\tau=t}
\Big]
=&\quad
\E_x\Big[ w(B_{t-\epsilon},\epsilon)e^{I_{t-\epsilon}}\Big(\indic{\tau=t}
-\indic{\tau\ge t-\epsilon}\Big)\Big]
\\&+
\E_x\Big[ w(B_{t-\epsilon},\epsilon)e^{I_{t-\epsilon}}\indic{\tau\ge
t-\epsilon}\Big]
-
\E_x\Big[ w(B_{t},\epsilon)e^{I_t}\indic{\tau=t}\Big]
\\&+
\E_x\Big[ w(B_{t},\epsilon)e^{I_t}\indic{\tau=t}\Big].
\end{aligned}
\end{equation}
(In this equation, we set $w(B_t,\epsilon)$ to an arbitrary bounded value when $B_t<\mu_\epsilon$.)
It is clear by dominated convergence that  the first line on the right hand side
of \eqref{3.3}
goes to 0 as $\epsilon\searrow0$.

Let us now show that the second line of \eqref{3.3} goes to 0 as $\epsilon\searrow0$.
Define
$\phi_r(y;x,t)$ as
\begin{equation} \label{eq:phitdef}
\phi_r(y;x,t)=\E_x\Big[\delta(B_r-y)e^{I_r}\indic{\tau\ge r} \Big]
=p_r(x-y)\E_x\Big[e^{I_r}\indic{\tau\ge r} \Big| B_r=y\Big].
\end{equation}
(This is the probability, weighted by $e^{I_r}$, that the path of length $r$
started from $(x,t)$ arrives at $(y,t-r)$ without touching the left 
boundary.) By integrating over the value of $B_r$, 
we have that for $r\leq t$,
\begin{equation*}\label{Ephi}
\E_x\Big[ w(B_{r},\epsilon)e^{I_r}\indic{\tau\ge r}\Big]
=\int_{-\infty}^{\infty}\diffd y\,w(y,\epsilon)
\phi_{r         }(y;x,t).
\end{equation*}
Thus,  the second line of~\eqref{3.3} can be written as
\begin{equation}
\E_x\Big[
w(B_{t-\epsilon},\epsilon)e^{I_{t-\epsilon}}\indic{\tau\ge t-\epsilon}\Big]
-
\E_x\Big[ w(B_{t},\epsilon)e^{I_t}\indic{\tau=t}\Big]
=\int_{-\infty}^{\infty}\diffd y\,w(y,\epsilon)
\Big(
 \phi_{t-\epsilon}(y;x,t)
-\phi_{t         }(y;x,t)
\Big).
\label{secondline}
    \end{equation}
To show that the second line of~\eqref{3.3} goes to zero as $\epsilon\searrow0$, it
is then sufficient to show that $
\big| \phi_{t-\epsilon}(y;x,t)
-\phi_{t         }(y;x,t)
\big|$ is bounded by an integrable function
and goes to 0 as $\epsilon\searrow0$.

Let $(\xi_t)_{t\in [0,1]}$ denote a Brownian bridge (with diffusivity
$\sqrt2$) from 0 to 0 in time $1$ and introduce,
 for $x$, $y$ fixed,
\begin{equation*}
F(\xi,s,r)=\sqrt r \xi +x +s\frac{y-x}r,  
\end{equation*}
so that $(F(\xi_{s/r},s,r))_{s\leq r}$
is a Brownian bridge from $x$ to $y$ in time $r$.
Then by~\eqref{eq:phitdef},
\begin{align*}
\phi_r(y;x,t)
&=p_r(x-y)\E_x\Big[e^{I_r}\indic{\tau\ge r} \Big| B_r=y\Big] \\
&=p_r(x-y)\E_x\Big[e^{\int_0^rK(B_s,t-s)\diffd s}\indic{\mu_{t-s}< B_s\
\forall s < r} \Big| B_r=y\Big]
\\
&=p_r(x-y)\E\Big[e^{\int_0^rK\big(F(\xi_{s/r},s,r),t-s\big)\diffd s}
\indic{\mu_{t-s}< F(\xi_{s/r},s,r)\
\forall s < r} \Big]
.
\end{align*}
Set $r=t-\epsilon$ and take the $\epsilon\searrow0$ limit. 
Now by the continuity of $\xi$,
$$
\begin{aligned}
\indic{\mu_{t-s}< F(\xi_{s/t},s,t) \,\,\forall s<t}
	\le& \liminf_{\epsilon\searrow0} \indic{\mu_{t-s}<
F(\xi_{s/(t-\epsilon)},s,t-\epsilon)\,\,\forall s<t-\epsilon}\\
	\le&
\limsup_{\epsilon\searrow0}\indic{\mu_{t-s}<
F(\xi_{s/(t-\epsilon)},s,t-\epsilon)\,\,\forall s<t-\epsilon}
\le \indic{\mu_{t-s}\le F(\xi_{s/t},s,t)\,\,\forall s<t}.
\end{aligned}
$$
Since $x>\mu_t$, for $y>\mu_0$ we can apply
Lemma~\ref{lem:min0}, which yields that the probability that the lower and upper bounds
above are different is zero.
We can conclude, by dominated convergence,
that
\begin{align*}
\lim_{\epsilon\searrow0}{\phi_{t-\epsilon}(y;x,t)}
={\phi_{t}(y;x,t)}.
\end{align*}
For $y<\mu_0$, we have that $y<\mu_\epsilon$ for $\epsilon$ sufficiently small, and so for $\epsilon$ sufficiently small,
$$
\phi_{t-\epsilon}(y;x,t)=0
={\phi_{t}(y;x,t)}.
$$ 
Since $t>0$,
and $\phi_{t-\epsilon}(y;x,t) \leq p_{t-\epsilon}(x-y) e^{t \|K\|_\infty}$ by~\eqref{eq:phitdef},
 it is easy to see that $\phi_{t-\epsilon}(\cdot;x,t)$ can be
uniformly bounded for $\epsilon<t/2$
by a function with Gaussian tails. Therefore, by dominated convergence we see
that \eqref{secondline} (which is the second line of \eqref{3.3}) goes to
0 as $\epsilon\searrow0$.

It only remains to consider the third line of \eqref{3.3}. Using~\eqref{eq:phitdef}, we can write
\begin{equation}
\E_x\Big[ w(B_{t},\epsilon)e^{I_t}\indic{\tau=t}\Big]
=\int_{-\infty}^{\infty}\diffd y\,w(y,\epsilon)
\phi_{t         }(y;x,t).
\end{equation}
Since $w$ is bounded, 
$\phi_{t}(y;x,t) \leq p_{t}(x-y) e^{t \|K\|_\infty}$
and
$w(\cdot,\epsilon)\to w(\cdot,0)$ in $L^1_\text{loc}$ as
$\epsilon\searrow 0$, we have that
\begin{equation*}
\int_{-\infty}^\infty \diffd y\, w(y,\epsilon)\phi_t(y;x,t) \to
\int_{-\infty}^\infty \diffd y\, w(y,0)\phi_t(y;x,t)
=\E_x\Big[ w(B_{t},0)e^{I_t}\indic{\tau=t}\Big]
\quad \text{ as }\epsilon \searrow 0 .
\end{equation*}
This completes the proof.\end{proof}

\medskip

\noindent {\bf Acknowledgements:} JB and EB thank E. Presutti and A. De Masi for their hospitality at the GSSI, Italy where part of this work was conducted. \\
EB was partially supported by ANR grant ANR-16-CE93-0003 (ANR MALIN).\\
JB was partially supported by ANR grants ANR-14-CE25-0014 (ANR GRAAL)
and ANR-14-CE25-0013 (ANR NONLOCAL).

\bibliographystyle{abbrv}
\bibliography{FBP}

\end{document}